\documentclass[pdflatex,sn-mathphys-num]{sn-jnl}

\usepackage{graphicx}%
\usepackage{multirow}%
\usepackage{amsmath,amssymb,amsfonts}%
\usepackage{amsthm}%
\usepackage{mathrsfs}%
\usepackage[title]{appendix}%
\usepackage{xcolor}%
\usepackage{textcomp}%
\usepackage{manyfoot}%
\usepackage{booktabs}%
\usepackage{algorithm}%
\usepackage{algorithmicx}%
\usepackage{algpseudocode}%
\usepackage{listings}%

\theoremstyle{plain}
\newtheorem{theorem}{Theorem}[section]
\newtheorem{lemma}[theorem]{Lemma}
\newtheorem{proposition}[theorem]{Proposition}
\newtheorem{corollary}[theorem]{Corollary}

\theoremstyle{definition}
\newtheorem{assumption}[theorem]{Assumption}
\newtheorem{hypothesis}[theorem]{Hypothesis}

\theoremstyle{remark}
\newtheorem{remark}[theorem]{Remark}

\newcommand{\R}{\mathbb R}
\newcommand{\N}{\mathbb N}
\DeclareMathOperator{\sgn}{sgn}

\begin{document}

\title{Bilinear control of age--space structured populations}

\author[1]{\fnm{Jiguang} \sur{Yu}}\email{jyu678@bu.edu}
\equalcont{These authors contributed equally to this work as co-first authors.}

\author*[2]{\fnm{Louis Shuo} \sur{Wang}}\email{wang.s41@northeastern.edu}
\equalcont{These authors contributed equally to this work as co-first authors.}

\author*[3]{\fnm{Ye} \sur{Liang}}\email{ye-liang@uiowa.edu}

\affil[1]{\orgdiv{College of Engineering},
  \orgname{Boston University},
  \orgaddress{\city{Boston}, \postcode{02215}, \state{MA}, \country{USA}}}

\affil[2]{\orgdiv{Department of Mathematics},
  \orgname{Northeastern University},
  \orgaddress{\city{Boston}, \postcode{02115}, \state{MA}, \country{USA}}}

\affil[3]{\orgdiv{College of Engineering},
  \orgname{The University of Iowa},
  \orgaddress{\city{Iowa City}, \postcode{52242}, \state{IA},\country{USA}}}

\abstract{
We study constrained bilinear optimal control for nonlocal age--space
structured population equations with renewal boundary conditions and
endogenous surveillance feedback. The control acts as a coefficient in a mixed
transport--diffusion equation, while a scalar observable generated by the state
enters both the interior dynamics and the renewal law. This produces a
nonlinear closed-loop control-to-state map and a feedback-dependent adjoint
system. Using a characteristic mild formulation rather than a standard
Lions--Magenes argument, we establish closed-loop well-posedness and
Fr\'echet differentiability. We then derive the reduced and
feedback-corrected adjoint equations. The feedback derivative is identified as
a low-rank perturbation \(\ell_{\bar y,\bar u}(p)(t)\chi(a,x)\); in the
Volterra-kernel regime, the associated transfer operator is quasinilpotent,
yielding an explicit resolvent representation of the adjoint.
Finally, we prove first-order optimality conditions and decompose the
switching function into reduced and feedback-induced components.
}

\keywords{bilinear optimal control; age-structured population dynamics;
spatial diffusion; nonlocal interaction; renewal boundary condition;
backward Volterra operator; quasinilpotency}

\pacs[MSC Classification]{49J20, 35Q92; 35K57, 35F45, 47D06, 47G10, 92D25}

\maketitle

\section{Introduction}
\label{sec:intro}
This paper studies constrained bilinear optimal control problems for nonlocal
age--space structured population equations with surveillance observations. The
control acts as a coefficient in the state equation, the state satisfies a
renewal boundary condition in the age variable, and the coefficients depend on
an endogenous feedback observable. This combination produces a nonlinear
control-to-state map and a singular adjoint equation in which the feedback
closure induces a low-rank perturbation. The mathematical theory of
age-structured population dynamics, with and without spatial diffusion, is
classical \cite{webb1985theory,iannelli1995mathematical,langlais1985nonlinear,webb2008population,langlais1988large,nag2025analysis,wang2025analysis1,kang2026principal,kang2025global,wang2025analysis}, as is its optimal control
\cite{yu2026pattern,anita2013analysis,barbu2012mathematical,lions1971optimal}. The class of systems considered here combines
four analytical mechanisms:
\[
\text{transport in age}
\quad+\quad
\text{spatial diffusion}
\quad+\quad
\text{nonlocal interaction}
\quad+\quad
\text{bilinear coefficient control}.
\]
Each is standard in isolation, but their simultaneous presence creates a
substantially different optimal-control problem. Age transport and renewal
boundary conditions destroy the purely parabolic structure of the state
equation; spatial diffusion provides only partial regularization; nonlocal
interaction terms produce nonlinear population coupling; and bilinear
coefficient control makes the control-to-state map nonlinear even when the
remaining dynamics are linear. The endogenous feedback closure introduces an
additional nonlocal term in the adjoint equation.

A guiding methodological point is that the age direction is hyperbolic: there
is no smoothing along characteristics $t-a=\mathrm{const}$, and spatial
diffusion regularizes only in $x$. Consequently the natural energy space does
not embed into continuous functions on $\overline Q$, and time continuity into
$X$ cannot be obtained from a Lions--Magenes argument
\cite{lions2012non}, because the energy estimate controls only the transport
derivative $D_{t,a}y=\partial_t y+\partial_a y$, not $\partial_t y$ separately.
We therefore (i) obtain time continuity into $X$ from the characteristic mild
representation and the regularizing spatial evolution family
\cite{pazy2012semigroups,acquistapace1987unified,wang2026algebraic}, and (ii) take the \emph{default} observation
model to be \emph{time-averaged} localized surveillance, which is continuous
with respect to strong $L^2(Q)$ convergence and yields a non-singular adjoint
source. Instantaneous fixed-time localized observations and pointwise
observations are treated as conditional extensions under additional regularity.

Let
\[
\mathcal I_a = (0,a_{\max}),\quad Q:=(0,T)\times\mathcal I_a\times\Omega ,
\quad
\Omega\subset\R^d,\quad d\in\{1,2,3\},
\]
and let
\[
X:=L^2(\mathcal I_a\times\Omega;\R^n),
\qquad
V:=L^2(\mathcal I_a;H^1(\Omega;\R^n)).
\]
The state is an $n$-component structured population density
$y=y(t,a,x)\in\R^n$, depending on time $t$, age or size $a$, and position $x$.
The prototype equation has the form
\begin{equation}
\label{eq:intro-state}
\partial_t y+\partial_a y
-\nabla_x\cdot(\Sigma(a,x)\nabla_x y)
+
\mathcal A(E(t),a,x)y
+
\mathcal N[y](t,a,x)y
+
\mathcal K(u)y
=
F
\quad\text{in }Q,
\end{equation}
with $\Sigma$ a uniformly elliptic diffusion tensor, $\mathcal A$ a feedback-dependent
coefficient matrix, $\mathcal N[y]y$ a nonlocal population interaction, and
$\mathcal K(u)y$ the bilinear control term. A typical nonlocal interaction is
\[
\mathcal N[y](t,a,x)
=
\int_{\mathcal I_a}\int_\Omega
\mathscr K(a,\alpha,x,\xi)y(t,\alpha,\xi)\,d\xi\,d\alpha .
\]
The renewal boundary condition is
\[
y(t,0,x)
=
\int_{\mathcal I_a}
\mathcal B(E(t),\alpha,x)y(t,\alpha,x)\,d\alpha ,
\]
and the feedback variable is the endogenous observable
\[
E(t)
=
\int_{\mathcal I_a}\int_\Omega
\chi(a,x)\cdot y(t,a,x)\,dx\,da .
\]
The control is constrained by box bounds:
\begin{equation}
\label{eq:intro-control-set}
U_{\rm ad}
:=
\left\{
u\in L^\infty(Q;\R^m):
u_{\min,\ell}\le u_\ell(t,a,x)\le u_{\max,\ell}
\ \text{a.e. in }Q,\ 1\le \ell\le m
\right\}.
\end{equation}
The optimal-control problem is to minimize
\[
J(y,u)
=
J_{\rm obs}(y)
+
\frac{\gamma}{2}
\int_Q |C_Iy-y_d|^2\,dx\,da\,dt
+
\frac{\alpha}{2}
\int_Q |u|^2\,dx\,da\,dt,
\qquad \alpha>0,\quad \gamma\ge0,
\]
subject to \eqref{eq:intro-state}--\eqref{eq:intro-control-set}. The default
surveillance term is time-averaged localized: with weights
$\rho_m\in L^2(0,T)$ and $\eta_m\in X$,
\[
\mathcal O_m y
=
\int_0^T\rho_m(t)\,(\eta_m,y(t))_X\,dt
=
\int_Q \big(\rho_m(t)\eta_m(a,x)\big)\cdot y(t,a,x)\,dx\,da\,dt,
\]
which is a bounded linear functional on $L^2(Q)$. Instantaneous fixed-time
observations $(\eta_m,y(t_m))_X$ and pointwise observations
$e_I\cdot y(t_m,a_m,x_m)$ are admitted only under the additional regularity of
Assumption~\ref{ass:observation-regimes}.

The main analytical contribution is the derivation of an optimality system for
a nonlocal age--space structured equation with bilinear coefficient control and
endogenous surveillance feedback. The feedback derivative generates a
separable adjoint source \(\ell_{\bar y,\bar u}(p)(t)\chi(a,x)\), which is
rank-one in the structured variables but remains time-dependent. Causality
implies that the associated feedback transfer operator is backward Volterra in
time. Under an \(L^2\)-kernel representation, this operator is
Hilbert--Schmidt and quasinilpotent \cite{gohberg1970theory,wang2026breakdown}, so the feedback-corrected adjoint is
obtained through a Volterra resolvent. This distinguishes the full
time-dependent problem from stationary or algebraic reductions, where the
feedback loop collapses to a scalar Sherman--Morrison denominator and genuine
resonance may occur.

We summarize the main results.

\paragraph{Main result A: closed-loop well-posedness.}
For every $u\in U_{\rm ad}$, the closed-loop state equation admits a unique
weak solution $y(u)\in\mathcal Y$, where
$\mathcal Y\subset L^2(0,T;V)\cap C([0,T];X)$ is the transport--diffusion
energy space of Section~\ref{sec:state-feedback-differentiability}. Time
continuity into $X$ is obtained from the characteristic mild representation;
the feedback loop is closed by a weighted Volterra contraction,
\[
\|\mathcal F(E_1)-\mathcal F(E_2)\|_\lambda
\le
\frac{C}{\lambda}
\|E_1-E_2\|_\lambda .
\]
A non-circular global a priori estimate is available under the global
one-sided dissipativity Assumption~\ref{ass:dissipativity}; without it one
obtains the same results locally in time.

\paragraph{Main result B: differentiability of the control-to-state map.}
The control-to-state map $S(u)=y(u)$ is Fr\'echet differentiable after
extension to an open $L^\infty$-neighborhood of $U_{\rm ad}$, and the
derivative $z=S'(\bar u)h$ solves the linearized age--space system, including
the feedback-derivative coefficient and boundary terms.

\paragraph{Main result C: singular adjoint and low-rank feedback correction.}
The full feedback-corrected adjoint is
$\displaystyle \mathscr A_{\rm red}^*p
=
q_{\rm run}+q_{\rm obs}
+
\ell_{\bar y,\bar u}(p)(t)\chi $,
where $\ell_{\bar y,\bar u}$ is a scalar-in-time functional and, for the
default time-averaged surveillance, $q_{\rm obs}\in L^2(Q)$. With
$\mathcal T\theta:=\ell_{\bar y,\bar u}(\mathcal G^*(\theta(t)\chi))$, the
feedback coefficient solves $(I-\mathcal T)\theta=\ell_{\bar y,\bar u}(p_{\rm
red})$. 
The feedback transfer operator is backward Volterra in time by causality.
When this operator admits an \(L^2\)-kernel representation, it is
Hilbert--Schmidt and quasinilpotent; hence \(I-\mathcal T\) is invertible and
the feedback-corrected adjoint admits an explicit Volterra-resolvent
representation. Thus genuine scalar resonance is excluded in the
time-dependent Volterra-kernel regime, whereas it may reappear after a
stationary or algebraic collapse of the feedback loop.
Furthermore,
\[
p
=
p_{\rm red}
+
\mathcal G^*
\left(
\left[
(I-\mathcal T)^{-1}
\ell_{\bar y,\bar u}(p_{\rm red})
\right](t)\chi
\right).
\]

\paragraph{Main result D: optimality system and switching correction.}
The reduced objective admits global minimizers in the localized regime
(Section~\ref{sec:optimality-switching-examples}). For a local minimizer
$\bar u$ with state $\bar y$ and feedback-corrected adjoint $\bar p$, the
gradient density $\mathcal K^*(\bar y,\bar p)$ lies in $L^1(Q)$ in general
(and in $L^2(Q)$ under the regularity of
Proposition~\ref{prop:Kstar-L2}), and
\[
\int_Q
\left(
\alpha\bar u+\mathcal K^*(\bar y,\bar p)
\right)\cdot
(u-\bar u)
\,dx\,da\,dt
\ge0
\qquad
\forall u\in U_{\rm ad},
\]
with the pointwise projection characterization
$\bar u_\ell=\Pi_{[u_{\min,\ell},u_{\max,\ell}]}(-\alpha^{-1}
\mathcal K_\ell^*(\bar y,\bar p))$ \cite{troltzsch2010optimal,yu2026beyond,hinze2008optimization}. The switching
function decomposes as $S_{\rm sw}=S_{\rm red}+S_{\rm fb}$, and a
threshold-preservation result shows that feedback changes the optimal
intervention decision only where $|S_{\rm red}|\le|S_{\rm fb}|$.

The remainder of the paper is organized as follows.
Section~\ref{sec:state-feedback-differentiability} develops the state
equation, proves closed-loop well-posedness (with time continuity from the
characteristic representation), and establishes differentiability.
Section~\ref{sec:singular-adjoint-feedback} derives the adjoint equation and
proves the unconditional operator-valued Fredholm representation.
Section~\ref{sec:optimality-switching-examples} proves existence via a conditional
transport--diffusion compactness theorem, derives the optimality system, and
analyzes the switching-function perturbation.

\section{Literature Review}
\label{sec:lite_review}
Physiologically structured population models, together with their reformulation as integral or delay equations, have been extensively investigated over the past decades. The transport (McKendrick--von Foerster) framework for age- and size-structured populations, as well as its mathematical analysis based on characteristic methods and semigroup theory, has become a classical foundation of the field~\cite{metz2014dynamics,wang2026damage,webb1985theory,liang2025global,yu2026rigorous,yu2026microscopic,gurtin1974non}. Comprehensive treatments of modelling techniques, analytical methods, and bifurcation theory can be found in standard monographs~\cite{iannelli1995mathematical,iannelli2017basic,cushing1998introduction,thieme2018mathematics,de1997gentle,metz2014dynamics,murray2007mathematical,perthame2007transport,liu2025bidirectional}. A unified formulation for nonlinear structured population models incorporating feedback through one or multiple scalar environmental variables was established in~\cite{diekmann1998formulation,diekmann2001formulation,diekmann2003steady,akimenko2018two,diekmann2020finite,diekmann2020models,yu2026microscopic,barril2022formulation,barril2024hierarchical,cai2026optimal}. This framework motivates our formulation of the closed-loop dynamics as a fixed-point problem for the environmental variable. The abstract semilinear formulation and the integrated-semigroup approach to structured population systems have been systematically developed in~\cite{magal2018theory,kang2022age,ducrot2021integrated,liang2026separation}, while local stability and bifurcation properties for representative feedback models were investigated in~\cite{diekmann2010daphnia,scarabel2021numerical,yu2026size}. Existence and stability results for transport equations with feedback, obtained through characteristic techniques and fixed-point arguments, are available in~\cite{calsina1995model,wang2026elliptic,bartlomiejczyk2015existence} and related studies.

Optimal control and harvesting problems for age- and size-structured population models have likewise received considerable attention. In particular, Pontryagin-type optimality conditions and related control theories have been extensively studied in~\cite{brokate1985pontryagin,feichtinger2003optimality,wu2025age,kumar2023stability,ni2023optimal,wu2025analysis,filho2025mathematical,kato2024measure,yu2026age}. A complementary line of research focuses on selective harvesting policies in structured populations, aiming to determine the most profitable subpopulation to harvest to maximize either discounted revenues or long-term economic returns~\cite{easterling2000size,fadlovich2025selectivity,wang2025multi,stubberud2019effects}. In contrast to these studies, the present work is primarily concerned with the well-posedness and stability analysis of the closed-loop system.

The present paper differs from this literature in three respects. First, the
control enters bilinearly as a coefficient rather than as an additive source
or a terminal harvesting functional. Second, the coefficients and renewal law
depend on an endogenous surveillance observable, so linearization produces a
feedback derivative both in the interior equation and at the renewal boundary.
Third, the adjoint feedback loop remains time-dependent, leading to a
Volterra transfer operator rather than a scalar Sherman--Morrison denominator.

\section{State equation, feedback closure, and differentiability}
\label{sec:state-feedback-differentiability}
Throughout, $T>0$, $a_{\max}>0$, and $\Omega\subset\R^d$, $d\in\{1,2,3\}$, is a
bounded Lipschitz domain. 
We set
\[
\mathcal I_a:=(0,a_{\max}),\qquad
Q:=(0,T)\times  \mathcal I_a\times\Omega ,
\]
The state is
$y=(y_1,\ldots,y_n)^\top$, and we use
\[
X:=L^2(\mathcal I_a\times\Omega;\R^n),
\qquad
V:=L^2(\mathcal I_a;H^1(\Omega;\R^n)),
\qquad
V':=L^2(\mathcal I_a;H^1(\Omega;\R^n)').
\]
We write $\langle\cdot,\cdot\rangle_{V',V}$, $(\cdot,\cdot)_X$, and
$D_{t,a}:=\partial_t+\partial_a$. The feedback observable is
\[
E_y(t)
:=
\int_{\mathcal I_a}\int_\Omega
\chi(a,x)\cdot y(t,a,x)\,dx\,da ,
\qquad 0\le t\le T,
\]
with $\chi$ fixed (regularity specified in
Assumption~\ref{ass:compactness-data}). The state equation is
\begin{equation}
\label{eq:abstract-state-equation}
D_{t,a}y
-\nabla_x\cdot(\Sigma(a,x)\nabla_x y)
+
\mathcal A(E_y(t),a,x)y
+
\mathcal N[y](t,a,x)y
+
\mathcal K(u)y
=
F
\quad\text{in }Q,
\end{equation}
with renewal boundary condition
\[
y(t,0,x)
=
\int_{\mathcal I_a}
\mathcal B(E_y(t),\alpha,x)y(t,\alpha,x)\,d\alpha ,
\qquad (t,x)\in(0,T)\times\Omega,
\]
initial condition $y(0,a,x)=y^0(a,x)$, and no-flux spatial boundary condition
\begin{equation}
\label{eq:noflux-boundary}
\Sigma(a,x)\nabla_x y(t,a,x)\cdot\nu(x)=0
\quad\text{on }(0,T)\times \mathcal I_a\times\partial\Omega .
\end{equation}
The control acts as a coefficient:
\[
\mathcal K(u)y
=
\sum_{\ell=1}^m u_\ell(t,a,x)K_\ell(a,x)y(t,a,x),
\]
with $K_\ell\in L^\infty(\mathcal I_a\times\Omega;\R^{n\times n})$, and admissible
controls
\[
U_{\rm ad}
:=
\left\{
u\in L^\infty(Q;\R^m):
u_{\min,\ell}\le u_\ell\le u_{\max,\ell}
\ \text{a.e.},\ 1\le \ell\le m
\right\}.
\]
The nonlocal operator is
\[
\mathcal N[y](t,a,x)
=
\int_{\mathcal I_a}\int_\Omega
\mathscr K(a,\alpha,x,\xi)y(t,\alpha,\xi)\,d\xi\,d\alpha ,
\]
with $\mathscr K$ a bounded kernel valued in linear maps
$\R^n\to\R^{n\times n}$, so $\mathcal N[y]y$ is an $n$-vector.

\subsection{Structural assumptions}
\label{subsec:state-assumptions}
\begin{assumption}[Diffusion]
\label{ass:diffusion}
$\Sigma: \mathcal I_a\times\Omega\to\R^{n\times n\times d\times d}$ is measurable,
bounded, and uniformly elliptic: there is $\sigma_0>0$ with
\[
\sum_{i=1}^n\sum_{j,k=1}^d
\Sigma_i^{jk}(a,x)\xi_i^j\xi_i^k
\ge
\sigma_0
\sum_{i=1}^n|\xi_i|^2
\]
for a.e. $(a,x)$ and all $\xi\in(\R^d)^n$. Moreover $\Sigma$ is independent of
$t$ (no $t$-dependence in the principal part).
\end{assumption}

\begin{assumption}[Feedback-dependent coefficients]
\label{ass:feedback-coefficients}
$\mathcal A:\R\times \mathcal I_a\times\Omega\to\R^{n\times n}$ is measurable in $(a,x)$ and
$C^2$ in the first variable, with, for every $R>0$,
\begin{equation}
\label{eq:A-bounds}
\sup_{|E|\le R}
\left(
\|\mathcal A(E,\cdot,\cdot)\|_{L^\infty}
+
\|\partial_E \mathcal A(E,\cdot,\cdot)\|_{L^\infty}
+
\|\partial_{EE}\mathcal A(E,\cdot,\cdot)\|_{L^\infty}
\right)
\le C_R .
\end{equation}
\end{assumption}

\begin{assumption}[Renewal operator]
\label{ass:renewal}
$\mathcal B:\R\times \mathcal I_a\times\Omega\to\R^{n\times n}$ is measurable in $(\alpha,x)$ and
$C^2$ in the first variable, with the analogue of \eqref{eq:A-bounds} for
$\mathcal B,\partial_E \mathcal B,\partial_{EE}\mathcal B$.
\end{assumption}

\begin{assumption}[Nonlocal interaction]
\label{ass:nonlocal}
$\mathscr K\in L^\infty(\mathcal I_a\times \mathcal I_a\times\Omega\times\Omega)$. Hence there
is $C_{\mathscr K}>0$ with, for $y_1,y_2\in X$,
\[
\begin{aligned}
\|\mathcal N[y_1]\|_{L^\infty(\mathcal I_a\times\Omega)}
&\le
C_{\mathscr K}\|y_1\|_X,
\\
\|\mathcal N[y_1]-\mathcal N[y_2]\|_{L^\infty(\mathcal I_a\times\Omega)}
&\le
C_{\mathscr K}\|y_1-y_2\|_X .
\end{aligned}
\]
Moreover $y\mapsto\mathcal N[y]$ is sequentially weak-to-strong continuous: if
$y_k\rightharpoonup y$ in $L^2(Q)$, then $\mathcal N[y_k]\to\mathcal N[y]$ in
$L^2(Q)$ (a consequence of the compactness of the kernel operator
\cite{brezis2011functional,gao2022rolling}).
\end{assumption}

\begin{assumption}[Global one-sided dissipativity]
\label{ass:dissipativity}
There is $c_0\ge0$, \emph{independent of the solution size}, such that
\begin{equation}
\label{eq:global-dissipativity}
\left[
\mathcal A(E,a,x)\eta
+
\mathcal N[y](t,a,x)\eta
+
\sum_{\ell=1}^m u_\ell K_\ell(a,x)\eta
\right]\cdot\eta
\ge
-c_0|\eta|^2
\end{equation}
for a.e. $(t,a,x)$, all $E\in\R$, all admissible $u$, all $\eta\in\R^n$, and
\emph{all} $y\in X$ (so $c_0$ does not depend on $\|y\|_X$ or $|E|$).
\end{assumption}

\begin{remark}
\label{rem:dissipativity-noncircular}
The global form \eqref{eq:global-dissipativity} is what makes the a priori
estimate of Theorem~\ref{thm:closed-loop-state-wp} non-circular: the Gronwall
rate $c_0$ does not depend on the radius of the fixed-point ball, so the ball
can be chosen as a data-dependent multiple of $\|y^0\|_X+\|F\|$ and is then
invariant. If the kernel $\mathscr K$ has a population-dissipative sign
structure, \eqref{eq:global-dissipativity} holds. If only the local bound
$c_{R}=c_0(1+R)$ is available (constant growing with $\|y\|_X\le R$), the
quadratic nonlocal term may drive finite-time blow-up; one then obtains the
same theory locally in time, with a lifespan depending on the data and the
control bounds.
\end{remark}

For the compactness and existence theory of
Section~\ref{sec:optimality-switching-examples} we use the following mild
regularity, stated here so all hypotheses appear together. It is not needed in
Sections~\ref{sec:state-feedback-differentiability}--\ref{sec:singular-adjoint-feedback}.

\begin{assumption}[Regularity of coefficients for compactness]
\label{ass:compactness-data}
The diffusion matrix satisfies \(\Sigma \in L^\infty(\Omega; \R^{d\times d})\). 
The renewal kernel \(\mathcal B = \mathcal B(E, \alpha, x)\) satisfies 
\(\mathcal B(E, \alpha, \cdot) \in W^{1,\infty}(\Omega)\) uniformly with respect to \((E, \alpha)\). 
Specifically, there exists a constant \(C_B > 0\) such that
$\|\mathcal B(E, \alpha, \cdot)\|_{W^{1,\infty}(\Omega)} \le C_B$
for all admissible \(E\) and \(\alpha \in [0, a_{\max}]\).
\end{assumption}

\subsection{Weak formulation and the solution space}
\label{subsec:weak-formulation}
We define
\begin{equation}
\label{eq:Y-space}
\mathcal Y
:=
\left\{
y\in L^2(0,T;V)\cap C([0,T];X):
D_{t,a}y\in L^2(0,T;V'),\
y(\cdot,0,\cdot)\in L^2(0,T;L^2(\Omega;\R^n))
\right\},
\end{equation}
with norm
\[
\|y\|_{\mathcal Y}
:=
\|y\|_{L^2(0,T;V)}
+
\|y\|_{C([0,T];X)}
+
\|D_{t,a}y\|_{L^2(0,T;V')}
+
\|y(\cdot,0,\cdot)\|_{L^2(0,T;L^2(\Omega))}.
\]

\begin{remark}[On time continuity]
\label{rem:time-continuity}
The membership $y\in C([0,T];X)$ in \eqref{eq:Y-space} is \emph{not} a
consequence of the standard Lions--Magenes lemma \cite{lions2012non}, which
would require $\partial_t y\in L^2(0,T;V')$. The energy estimate below controls
only the transport derivative $D_{t,a}y$. Time continuity into $X$ is instead
obtained from the characteristic mild representation
(Lemma~\ref{lem:frozen-linear-wp}): along each characteristic
$t-a=\mathrm{const}$, $y$ is given by the variation-of-constants formula for
the analytic evolution family generated by the spatial diffusion
\cite{pazy2012semigroups,acquistapace1987unified,liu2026ftu}, and parabolic smoothing yields continuity in
the evolution variable, hence in $t$. As visually distinguished in Figure~\ref{fig:transport_diffusion}, the hyperbolic age-time transport contrasts with the parabolic spatial diffusion. This clarifies the structure of the energy space $\mathcal{Y}$ and illustrates how the limitation of the standard Lions--Magenes continuity argument is bypassed.
\end{remark}

\begin{figure}[htbp]
    \centering
    \includegraphics[width=\textwidth]{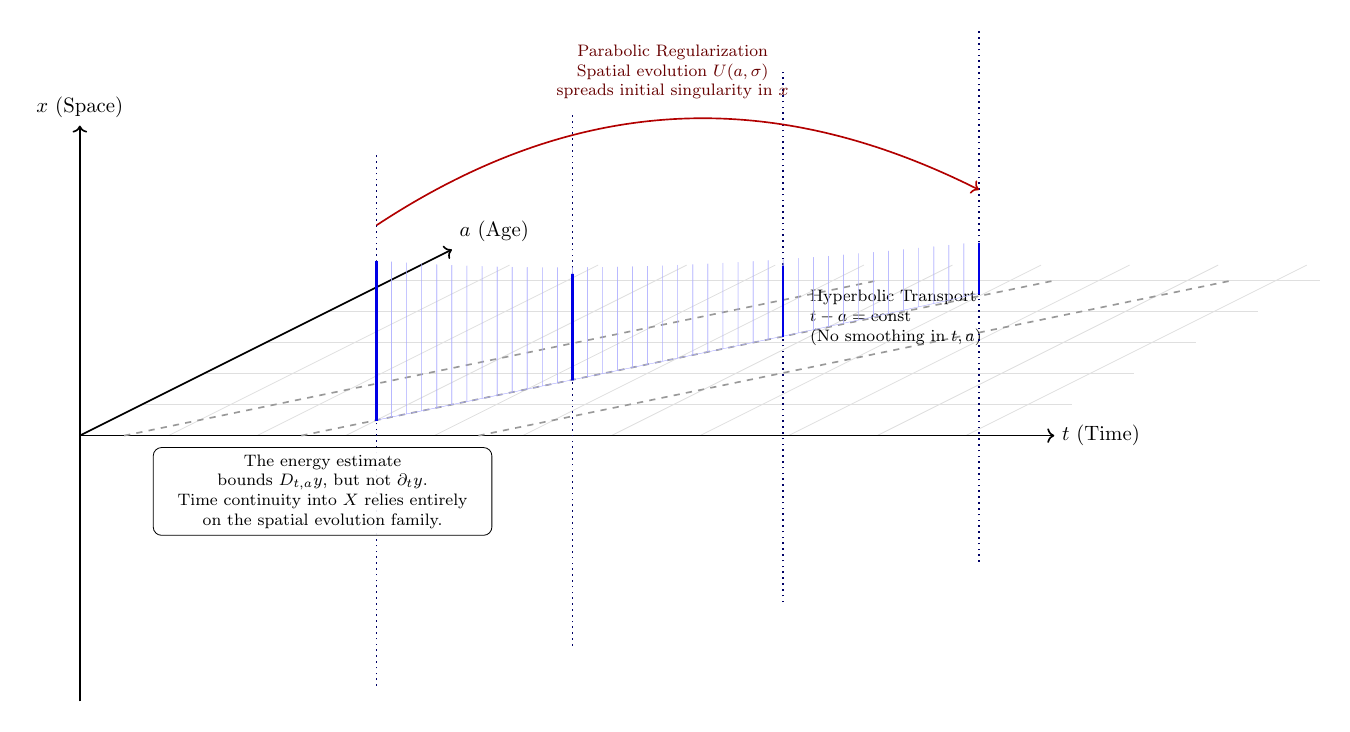} 
    \caption{Characteristic transport vs. spatial diffusion. The figure contrasts the hyperbolic age-time transport along characteristics ($t-a=\mathrm{const}$), which exhibits no smoothing, with the parabolic spatial diffusion driven by the evolution family $U(a,\sigma)$ in the $x$-direction. This structural difference clarifies the nature of the energy space $\mathcal{Y}$ and explains the bypass of the standard Lions--Magenes continuity argument.}
    \label{fig:transport_diffusion}
\end{figure}

For $E\in C([0,T])$, $w\in C([0,T];X)$, $u\in L^\infty(Q;\R^m)$, set the frozen
coefficient
\[
C_{E,w,u}(t,a,x)
:=
\mathcal A(E(t),a,x)
+
\mathcal N[w](t,a,x)
+
\sum_{\ell=1}^m u_\ell K_\ell(a,x),
\]
and the frozen renewal operator
$\displaystyle \mathcal R_E(t)\varphi(x)
:=
\int_{\mathcal I_a}
\mathcal B(E(t),\alpha,x)\varphi(\alpha,x)\,d\alpha $,
which by Assumption~\ref{ass:renewal} satisfies
\begin{equation}
\label{eq:renewal-estimate}
\|\mathcal R_E(t)\varphi\|_{L^2(\Omega)}
\le
C_R a_{\max}^{1/2}\|\varphi\|_X
\quad
\forall |E(t)|\le R,\ \varphi\in X .
\end{equation}
A weak solution $y\in\mathcal Y$ of
\eqref{eq:abstract-state-equation}--\eqref{eq:noflux-boundary} satisfies
$y(0)=y^0$, the renewal condition
$y(t,0,\cdot)=\mathcal R_{E_y}(t)y(t,\cdot,\cdot)$ for a.e.\ $t$, and
\begin{multline*}
\langle D_{t,a}y(t),\varphi\rangle_{V',V}
+
\int_{\mathcal I_a}\int_\Omega
\Sigma\nabla_x y:\nabla_x\varphi\,dx\,da
\\
+
\int_{\mathcal I_a}\int_\Omega
\left[
\mathcal A(E_y(t))y
+
\mathcal N[y]y
+
\mathcal K(u)y
\right]\cdot\varphi\,dx\,da
=
\langle F(t),\varphi\rangle_{V',V}
\end{multline*}
for every $\varphi\in V$ and a.e.\ $t$.

\subsection{The frozen linear problem}
\label{subsec:frozen-linear-problem}
Given $E\in C([0,T])$, $w\in C([0,T];X)$, $u\in L^\infty(Q;\R^m)$,
$G\in L^2(0,T;V')$, $y^0\in X$, consider
\begin{equation}
\label{eq:frozen-linear-equation}
D_{t,a}y
-\nabla_x\cdot(\Sigma\nabla_x y)
+
C_{E,w,u}y
=
G,
\qquad
y(t,0,\cdot)=\mathcal R_E(t)y(t,\cdot,\cdot),
\quad
y(0)=y^0 .
\end{equation}
Let $\{U(a,\sigma)\}_{0\le\sigma\le a\le a_{\max}}$ denote the parabolic
evolution family on $X_\Omega:=L^2(\Omega;\R^n)$ generated by the family
$\{-\nabla_x\cdot(\Sigma(a,\cdot)\nabla_x\cdot)\}_a$ with the no-flux boundary
condition \cite{acquistapace1987unified,pazy2012semigroups,liu2026computational}. Since $\Omega$ is bounded, each
generator has compact resolvent, so $U(a,\sigma)$ is a \emph{compact} operator
on $X_\Omega$ for $a>\sigma$, is uniformly bounded, and satisfies the analytic
smoothing estimate
$\|U(a,\sigma)\|_{\mathcal L((H^1)',L^2)}\le C(a-\sigma)^{-1/2}$.

\begin{lemma}[Frozen transport--diffusion--renewal problem]
\label{lem:frozen-linear-wp}
Let Assumptions~\ref{ass:diffusion}--\ref{ass:dissipativity} hold.
Let $E\in C([0,T])$, $w\in C([0,T];X)$, $u\in L^\infty(Q;\R^m)$,
$G\in L^2(0,T;V')$, $y^0\in X$. Then \eqref{eq:frozen-linear-equation} has a
unique weak solution $y\in\mathcal Y$, with $y\in C([0,T];X)$, and there is a
constant $C$ depending on the data and on $c_0$, $\|E\|_{C}$, $\|w\|_C$,
$\|u\|_\infty$ but \emph{not growing with the solution}, such that
\begin{equation}
\label{eq:frozen-estimate}
\|y\|_{\mathcal Y}
\le
C
\left(
\|y^0\|_X
+
\|G\|_{L^2(0,T;V')}
\right).
\end{equation}
\end{lemma}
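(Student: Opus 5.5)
The plan is to build the solution from the characteristic mild representation, pass to a fixed point for the renewal trace, and only afterwards derive the energy estimate and uniqueness.

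\emph{Step 1: characteristic representation for a prescribed trace.} Fix a boundary datum $b\in L^2(0,T;X_\Omega)$ in place of the renewal trace. Write $y$ along characteristics $t-a=c$. For $t\ge a$ (characteristics starting at the boundary),
\[
y(t,a)=U(a,0)b(t-a)+\int_0^a U(a,\sigma)\big[G-C_{E,w,u}y\big](t-a+\sigma,\sigma)\,d\sigma .
\]
For $t<a$ the characteristic starts at $y^0(a-t)$ instead, giving the analogous formula with $U(a,a-t)$. Since $C_{E,w,u}\in L^\infty(Q)$ (Assumptions~\ref{ass:feedback-coefficients} and \ref{ass:nonlocal}, using $\|\mathcal N[w]\|_\infty\le C_{\mathscr K}\|w\|_{C([0,T];X)}$), the zeroth-order term is a bounded perturbation. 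A Picard iteration along each characteristic, equivalently a perturbed evolution family, then gives a unique mild solution. The source $G\in L^2(0,T;V')$ is handled with the smoothing bound $\|U(a,\sigma)\|_{\mathcal L((H^1)',L^2)}\le C(a-\sigma)^{-1/2}$; this makes the convolution a bounded map into $L^2$ along characteristics, in the standard maximal-regularity fashion. The same bounds give $y\in L^2(0,T;V)$, because along each characteristic the evolution variable is the parabolic time.

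\emph{Step 2: closing the renewal loop.} The renewal condition becomes the fixed-point equation $b=\Phi(b)$, where $\Phi(b)(t):=\mathcal R_E(t)y_b(t)$. By \eqref{eq:renewal-estimate}, $\Phi(b)(t)$ depends on $b$ only through $b(s)$ for $s<t$, since the ages are positive, and
\[
\|\Phi(b_1)(t)-\Phi(b_2)(t)\|^2\le C\int_0^t\|b_1-b_2\|^2(s)\,ds .
\]
In the weighted norm $\|b\|_\lambda^2=\int_0^T e^{-2\lambda t}\|b(t)\|^2dt$ the map $\Phi$ is therefore a contraction for large $\lambda$. Banach's theorem gives a unique $b$, and hence the solution $y$.

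\emph{Step 3: energy estimate and uniqueness.} Test the weak form with $y$. The transport term gives
\[
\tfrac12\tfrac{d}{dt}\|y(t)\|_X^2+\tfrac12\|y(t,a_{\max})\|^2-\tfrac12\|y(t,0)\|^2 .
\]
Bound the inflow term by $\|\mathcal R_E y\|^2\le C\|y\|_X^2$. Bound the zeroth-order term below by $-c_0\|y\|_X^2$ using Assumption~\ref{ass:dissipativity}. Use ellipticity to control $\sigma_0\|\nabla_x y\|^2$, and absorb $\langle G,y\rangle$ by Young's inequality. Gronwall then controls $\sup_t\|y\|_X$, $\|y\|_{L^2(V)}$ and the trace; the constant depends only on $c_0$, $\|E\|_C$ and the data, not on the solution. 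The bound on $\|D_{t,a}y\|_{L^2(V')}$ is then read off from the equation. Uniqueness follows by applying the same identity to the difference of two weak solutions. To justify it one needs a trace/integration-by-parts formula in the space with $D_{t,a}y\in L^2(V')$; I would obtain this by mollifying along characteristics, i.e.\ by applying the Lions--Magenes lemma on each characteristic line rather than in $t$.

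\emph{Step 4: time continuity, the main obstacle.} Continuity of $t\mapsto y(t)\in X$ is the delicate part, for two reasons. First, the boundary datum $b$ is only $L^2$ in time, so $y$ is not continuous along individual characteristics that start at the boundary. Second, at a fixed $t$, $y(t,\cdot)$ samples $b$ at different times. My approach is to show that $t\mapsto y(t,\cdot)\in L^2(\mathcal I_a\times\Omega)$ is continuous by treating the shift in $t$ as a translation of the profile $a\mapsto b(t-a)$. Translation is continuous in $L^2$, and the same holds for the $y^0$ contribution. The remaining part is the integral (Duhamel) term, whose continuity follows from strong continuity of $U$ together with dominated convergence, using the smoothing bound to handle $G\in V'$.

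With these four steps the solution lies in $\mathcal Y$ and satisfies \eqref{eq:frozen-estimate}.
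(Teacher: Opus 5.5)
Your proposal is correct. It uses the same ingredients as the paper: the characteristic mild formula with the evolution family $U(a,\sigma)$, a contraction argument, closure of the renewal loop through \eqref{eq:renewal-estimate}, the energy estimate with global dissipativity and Gronwall, and uniqueness from the energy identity applied to a difference.

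The order and the details differ. The paper first derives the a priori bound. It then constructs the solution by a contraction in $C_a(X_\Omega)$ on short age intervals and closes the renewal loop in a single phrase. You instead freeze the birth trace $b$ and solve along characteristics. You then close the loop through the causal fixed point $b=\Phi(b)$ in a weighted $L^2(0,T;X_\Omega)$ norm, which makes the Volterra structure of the birth law explicit.

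Your Step 4 is more careful than the paper. The paper infers $y\in C([0,T];X)$ from continuity in $a$ along each characteristic. That gives no control in the transversal label $s=t-a$, and in that variable the renewal trace is only $L^2$. Your translation argument supplies exactly this transversal control. It relies on $\Sigma$ being $t$-independent (Assumption~\ref{ass:diffusion}), so the same $U$ acts on every characteristic; you should say so explicitly. Your justification of the energy identity, via Lions--Magenes on each characteristic line, also fills a step the paper leaves implicit.

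Two points need tightening.

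First, you misstate the obstruction. Along a single boundary characteristic, $a\mapsto U(a,0)b(s)$ is continuous for each fixed $s$. What fails is continuity \emph{across} characteristics.

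Second, the smoothing bound $C(a-\sigma)^{-1/2}$ is integrable but not square integrable. For data $G(s+\cdot,\cdot)\in L^2_\sigma((H^1)')$ it therefore gives only $L^q_a$ bounds with $q<\infty$. It does not give the pointwise-in-$a$ control you need to evaluate the $G$-contribution at $a=t-s$ and to run dominated convergence. Use instead the parabolic energy (maximal-regularity) bound on each characteristic, $\sup_a\|Y_s(a)\|_{X_\Omega}^2\le C\bigl(\|Y_s(a_0)\|_{X_\Omega}^2+\|G(s+\cdot,\cdot)\|_{L^2((H^1)')}^2\bigr)$. Apply it to $Y_{s+h}-Y_s$, which solves the same parabolic problem with translated data. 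Integrating in $a$ then reduces continuity of the $G$-part to $L^2$ translation continuity of $G$ in the characteristic variable; the $C_{E,w,u}y$ part is already in $L^2(Q)$ and needs only the uniform bound on $U$.

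The same caveat applies to the paper's claim that integrability of the smoothing kernel alone yields a contraction in $C_a(X_\Omega)$ when $G\in L^2(0,T;V')$.
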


\begin{proof}
\emph{Energy estimate.} Testing \eqref{eq:frozen-linear-equation} by $y$ over
$ \mathcal I_a\times \Omega$ gives
\begin{multline*}
\frac12\frac{d}{dt}\|y(t)\|_X^2
+
\frac12\|y(t,a_{\max},\cdot)\|_{L^2(\Omega)}^2
-
\frac12\|y(t,0,\cdot)\|_{L^2(\Omega)}^2
+
\sigma_0\|\nabla_x y(t)\|_{L^2(\mathcal I_a\times\Omega)}^2
\\
\le
c_0\|y(t)\|_X^2
+
\|G(t)\|_{V'}\|y(t)\|_V ,
\end{multline*}
using global dissipativity \eqref{eq:global-dissipativity} for the
zeroth-order terms; the outflow term
$\tfrac12\|y(t,a_{\max},\cdot)\|^2_{L^2(\Omega)}$ is retained and yields, after
integration, a bound on the outflow trace in $L^2((0,T)\times\Omega)$. With
\eqref{eq:renewal-estimate}, $\|y(t,0,\cdot)\|^2_{L^2(\Omega)}\le
C_R^2a_{\max}\|y(t)\|_X^2$, so after Young's inequality and Gronwall (rate
$c_0$, independent of the solution size),
$\displaystyle \|y\|_{C([0,T];X)}+\|y\|_{L^2(0,T;V)}
\le
C(\|y^0\|_X+\|G\|_{L^2(0,T;V')})$,
and the equation gives the same bound on $\|D_{t,a}y\|_{L^2(V')}$, while
\eqref{eq:renewal-estimate} gives the trace at $a=0$.

\emph{Time continuity and construction.} Along characteristics $t-a=s$, set
$Y(s,a,x):=y(s+a,a,x)$; then $\partial_a Y-\nabla_x\cdot(\Sigma\nabla_x Y)=
(G-C_{E,w,u}y)(s+a,a,\cdot)$, a parabolic Cauchy problem in $(a;x)$ with
inflow given by $y^0$ (for $s<0$) or by the renewal trace (for $s>0$). The
variation-of-constants formula
\[
Y(s,a,\cdot)
=
U(a,a_0)\,Y(s,a_0,\cdot)
+
\int_{a_0}^a U(a,\sigma)\,(G-C_{E,w,u}y)(s+\sigma,\sigma,\cdot)\,d\sigma
\]
defines a contraction in $C_a(X_\Omega)$ on small age-intervals, since the
analytic smoothing $\|U(a,\sigma)\|_{(H^1)'\to L^2}\le C(a-\sigma)^{-1/2}$ is
integrable; iterating over $\mathcal I_a$ and closing the renewal map by
\eqref{eq:renewal-estimate} produces a unique mild solution that is continuous
in $a$ valued in $X_\Omega$, hence $y\in C([0,T];X)$. This mild solution
coincides with the weak solution and obeys \eqref{eq:frozen-estimate}.
Uniqueness follows from the energy estimate applied to the difference of two
solutions with zero data.
\end{proof}

\begin{lemma}[Stability of the frozen solution]
\label{lem:frozen-stability}
Let Assumptions~\ref{ass:diffusion}--\ref{ass:dissipativity} hold.
Let $y_i$ solve \eqref{eq:frozen-linear-equation} for
$(E_i,w_i,u,G_i,y_i^0)$, with
$\|E_i\|_C+\|w_i\|_C+\|u\|_\infty\le R$. Then
\begin{multline}
\label{eq:frozen-stability-estimate}
\|y_1-y_2\|_{C([0,t];X)}^2
+
\|y_1-y_2\|_{L^2(0,t;V)}^2
\\
\le
C_R
\left[
\|y_1^0-y_2^0\|_X^2
+
\|G_1-G_2\|_{L^2(0,t;V')}^2
+
\int_0^t
\left(
|E_1-E_2|^2
+
\|w_1-w_2\|_X^2
\right)\,ds
\right]
\end{multline}
for all $t\in[0,T]$.
\end{lemma}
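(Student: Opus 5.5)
The plan is to write the equation for the difference $z:=y_1-y_2$ and run the same energy argument as in Lemma~\ref{lem:frozen-linear-wp}, treating the coefficient mismatch as a source term. Subtracting the two frozen problems gives
\[
D_{t,a}z-\nabla_x\cdot(\Sigma\nabla_x z)+C_{E_1,w_1,u}z
=
(G_1-G_2)
-\big[\mathcal A(E_1)-\mathcal A(E_2)\big]y_2
-\big(\mathcal N[w_1]-\mathcal N[w_2]\big)y_2 .
\]
The boundary and initial conditions are
\[
z(t,0)=\mathcal R_{E_1}(t)z(t)+\big(\mathcal R_{E_1}(t)-\mathcal R_{E_2}(t)\big)y_2(t),
\qquad
z(0)=y_1^0-y_2^0 .
\]
The control enters both problems identically, so it cancels and does not appear in the difference.

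\emph{Energy identity for $z$.} Test the difference equation with $z$ over $\mathcal I_a\times\Omega$. The left side is handled exactly as in Lemma~\ref{lem:frozen-linear-wp}. Global dissipativity \eqref{eq:global-dissipativity} bounds the zeroth-order term $C_{E_1,w_1,u}z\cdot z$ from below by $-c_0|z|^2$. Ellipticity gives $\sigma_0\|\nabla_x z\|^2$, and the outflow term at $a_{\max}$ is discarded since it has a good sign. For the inflow term, bound $\|z(t,0)\|_{L^2(\Omega)}^2$ by $2C_R^2a_{\max}\|z(t)\|_X^2+2\|(\mathcal R_{E_1}-\mathcal R_{E_2})y_2\|^2$. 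The $C^1$ dependence of $\mathcal B$ on $E$ (Assumption~\ref{ass:renewal}) and the mean value theorem give
\[
\|(\mathcal R_{E_1}-\mathcal R_{E_2})(t)y_2(t)\|_{L^2(\Omega)}\le C_R a_{\max}^{1/2}|E_1(t)-E_2(t)|\,\|y_2(t)\|_X .
\]

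\emph{Source terms.} Assumption~\ref{ass:feedback-coefficients} gives $\|\mathcal A(E_1)-\mathcal A(E_2)\|_{L^\infty}\le C_R|E_1-E_2|$. Assumption~\ref{ass:nonlocal} gives $\|\mathcal N[w_1]-\mathcal N[w_2]\|_{L^\infty}\le C_{\mathscr K}\|w_1-w_2\|_X$. Since both coefficient differences are $L^\infty$ in $(a,x)$, the corresponding terms are estimated by
\[
C_R\big(|E_1-E_2|+\|w_1-w_2\|_X\big)\|y_2(t)\|_X\|z(t)\|_X .
\]
The key point is that $\|y_2\|_{C([0,T];X)}$ is bounded by the a priori estimate \eqref{eq:frozen-estimate}. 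Its constant depends only on $R$, $c_0$ and the data, since we take $\|y_2^0\|_X$ and $\|G_2\|_{L^2(V')}$ within the data radius absorbed into $C_R$. The $G$ term is handled by $\|G_1-G_2\|_{V'}\|z\|_V\le \tfrac{\sigma_0}{4}\|z\|_V^2+C\|G_1-G_2\|_{V'}^2$. The $L^2$ part of $\|z\|_V$ is absorbed into the $\|z\|_X^2$ Gronwall term.

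\emph{Gronwall.} Young's inequality then yields
\[
\frac{d}{dt}\|z\|_X^2+\sigma_0\|\nabla_x z\|^2\le C_R\|z\|_X^2+C\|G_1-G_2\|_{V'}^2+C_R\big(|E_1-E_2|^2+\|w_1-w_2\|_X^2\big).
\]
Integrating over $(0,t)$ and applying Gronwall gives \eqref{eq:frozen-stability-estimate}, with the $C([0,t];X)$ and $L^2(0,t;V)$ norms on the left.

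\emph{Main obstacle.} The delicate step is justifying the energy identity at the regularity of $\mathcal Y$. Here $D_{t,a}z$ lies only in $L^2(V')$ and $\partial_t z$ is not separately controlled, so the chain rule for $\tfrac{d}{dt}\|z\|_X^2$ with its boundary traces is not automatic. I would obtain it exactly as in Lemma~\ref{lem:frozen-linear-wp}: use the characteristic mild representation, or equivalently regularize the data, derive the identity for smooth solutions, and pass to the limit using the linear estimate \eqref{eq:frozen-estimate}. One must also check that the inhomogeneous renewal term $(\mathcal R_{E_1}-\mathcal R_{E_2})y_2$ lies in $L^2((0,T)\times\Omega)$, which the bound above ensures, so that the linear theory applies with inhomogeneous inflow data.
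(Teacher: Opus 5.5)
Your proposal is correct and follows the paper's proof. Both arguments treat $y_1-y_2$ as a frozen problem with source $(C_{E_2,w_2,u}-C_{E_1,w_1,u})y_2$ and renewal defect $(\mathcal R_{E_1}-\mathcal R_{E_2})y_2$, bound these through the Lipschitz assumptions and the a priori bound \eqref{eq:frozen-estimate} on $y_2$, and close with the energy estimate of Lemma~\ref{lem:frozen-linear-wp} and Gronwall. Your version only makes explicit what the paper leaves implicit: the $G_1-G_2$ term, the inhomogeneous inflow, and the dependence of $C_R$ on the data bounds for $y_2$.
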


\begin{proof}
The difference $\zeta=y_1-y_2$ solves a frozen equation with right-hand side
$(C_{E_2,w_2,u}-C_{E_1,w_1,u})y_2$ and renewal defect
$(\mathcal R_{E_1}-\mathcal R_{E_2})y_2$. By the Lipschitz bounds of
Assumptions~\ref{ass:feedback-coefficients}--\ref{ass:nonlocal} and
\eqref{eq:frozen-estimate},
$\|(C_{E_2,w_2,u}-C_{E_1,w_1,u})y_2\|_{V'}\le C_R(|E_1-E_2|+\|w_1-w_2\|_X)$ and
$\|(\mathcal R_{E_1}-\mathcal R_{E_2})y_2\|_{L^2(\Omega)}\le C_R|E_1-E_2|$. The
energy estimate of Lemma~\ref{lem:frozen-linear-wp} gives
\eqref{eq:frozen-stability-estimate}.
\end{proof}

\subsection{Closed-loop feedback well-posedness}
\label{subsec:closed-loop-wp}
For fixed $u$, let $\mathscr T_u(E,w)=y$ solve
\eqref{eq:frozen-linear-equation} with $G=F$. The closed-loop system is the
fixed point $y=\mathscr T_u(E,y)$, $E=E_y$. For $\lambda>0$ set
$\|E\|_\lambda:=\sup_t e^{-\lambda t}|E(t)|$,
$\|y\|_{\lambda,X}:=\sup_t e^{-\lambda t}\|y(t)\|_X$.

\begin{lemma}[Weighted Volterra estimate]
\label{lem:feedback-volterra}
Let $u\in U_{\rm ad}$, and let $y_i=\mathscr T_u(E_i,y_i)$ with
$\|E_i\|_C+\|y_i\|_C\le R$. Then with
$\mathcal F(E_i)(t):=\int_{\mathcal I_a}\int_\Omega\chi\cdot y_i$, there is $C_R>0$,
independent of $\lambda$, with
$\displaystyle \|\mathcal F(E_1)-\mathcal F(E_2)\|_\lambda
\le
\frac{C_R}{\lambda}
\|E_1-E_2\|_\lambda$
for $\lambda$ large.
\end{lemma}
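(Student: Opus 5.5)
The plan is to combine the pointwise-in-time stability estimate of Lemma~\ref{lem:frozen-stability} with the bound $|\mathcal F(E_1)(t)-\mathcal F(E_2)(t)|\le\|\chi\|_X\|y_1(t)-y_2(t)\|_X$, and then pass to the weighted norms. Since $y_i=\mathscr T_u(E_i,y_i)$, both solve \eqref{eq:frozen-linear-equation} with the same $u$, the same $G=F$ and the same $y^0$. The frozen data are $(E_i,w_i)$ with $w_i=y_i$, and all norms are bounded by $R$. Lemma~\ref{lem:frozen-stability} therefore gives, for every $t\in[0,T]$,
\[
\|y_1(t)-y_2(t)\|_X^2
\le
C_R\int_0^t\Big(|E_1(s)-E_2(s)|^2+\|y_1(s)-y_2(s)\|_X^2\Big)\,ds .
\]
Gronwall's inequality absorbs the $w$-term, because it has the same Volterra structure, and yields
\[
\|y_1(t)-y_2(t)\|_X^2\le C_R e^{C_RT}\int_0^t|E_1(s)-E_2(s)|^2\,ds .
\]
This step is routine.

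Next I insert the weights. I bound $|E_1(s)-E_2(s)|\le e^{\lambda s}\|E_1-E_2\|_\lambda$, so that
\[
\int_0^t e^{2\lambda s}\,ds\le\frac{e^{2\lambda t}}{2\lambda}.
\]
Multiplying by $e^{-2\lambda t}$ and taking square roots gives
\[
e^{-\lambda t}|\mathcal F(E_1)(t)-\mathcal F(E_2)(t)|\le\|\chi\|_X\,C_R'\,(2\lambda)^{-1/2}\|E_1-E_2\|_\lambda ,
\]
and the constant $C_R'$ does not depend on $\lambda$.

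The main obstacle is that this argument naturally produces the rate $\lambda^{-1/2}$, whereas the statement claims $C_R/\lambda$. The $L^2$-in-time character of the stability estimate is what loses the half power. To obtain the full $\lambda^{-1}$, I would redo the energy estimate directly for $\zeta=y_1-y_2$ at the level of $\|\zeta(t)\|_X$ rather than its square. The right-hand side $(C_{E_2,y_2,u}-C_{E_1,y_1,u})y_2$ and the renewal defect $(\mathcal R_{E_1}-\mathcal R_{E_2})y_2$ are each linear in $|E_1-E_2|$ and $\|\zeta\|_X$ with bounded coefficients. The one exception is the $V'$ duality pairing, and here the boundedness of $y_2$ in $C([0,T];X)$ together with the $L^\infty$ bounds of Assumptions~\ref{ass:feedback-coefficients}--\ref{ass:nonlocal} lets the source be treated in $X$. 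This should give
\[
\frac{d}{dt}\|\zeta\|_X\le C_R\big(|E_1-E_2|+\|\zeta\|_X\big),
\]
which is Gronwall in $L^1$-time form.

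I expect the renewal inflow term to be the delicate point. It enters the energy identity as $\tfrac12\|\zeta(t,0)\|^2_{L^2(\Omega)}$, and I would control it by $C_R(|E_1-E_2|+\|\zeta(t)\|_X)^2$ via \eqref{eq:renewal-estimate} together with the Lipschitz dependence of $\mathcal B$ on $E$. Alternatively, I would use the characteristic mild representation of Lemma~\ref{lem:frozen-linear-wp}. In that form the renewal contributes linearly along each characteristic, through the uniformly bounded $U(a,0)$, giving a Volterra inequality with an $L^1$ kernel in time. From the resulting inequality
\[
\|\zeta(t)\|_X\le C_R\int_0^t|E_1-E_2|\,ds ,
\]
the weighted bound $\int_0^t e^{\lambda s}\,ds\le e^{\lambda t}/\lambda$ delivers exactly $C_R\|\chi\|_X\lambda^{-1}\|E_1-E_2\|_\lambda$. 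If the half-power loss cannot be avoided, the $\lambda^{-1/2}$ bound from the first part still makes $\mathcal F$ a contraction for $\lambda$ large, which is all that the closed-loop fixed point needs.
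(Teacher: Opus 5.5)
\textbf{Verdict: genuine gap.} Your first step is correct, but the upgrade to the stated $C_R/\lambda$ rate fails at the renewal inflow. In fact that rate is not provable for general $\chi\in X$.

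Your first step is a complete proof of the weaker rate. Lemma~\ref{lem:frozen-stability} with $w_i=y_i$, Gronwall, and $e^{-2\lambda t}\int_0^t e^{2\lambda s}\,ds\le(2\lambda)^{-1}$ give $\|\mathcal F(E_1)-\mathcal F(E_2)\|_\lambda\le C_R\lambda^{-1/2}\|E_1-E_2\|_\lambda$.

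Your upgrade step aims at the inequality the paper's own proof starts from, namely $\|\zeta(t)\|_X\le C_R\int_0^t(\|\zeta(s)\|_X+|E_1(s)-E_2(s)|)\,ds$ ``by the energy estimate''. The paper then weights with $e^{-\lambda t}\int_0^t e^{\lambda s}\,ds\le\lambda^{-1}$, absorbs for $\lambda>2C_R$, and uses $|\mathcal F(E_1)-\mathcal F(E_2)|\le\|\chi\|_X\|\zeta\|_X$. Neither of your justifications of this $L^1$-in-time inequality works, and both fail at the inflow you flagged.
\begin{itemize}
\item In the energy identity the inflow contributes $\tfrac12\|\zeta(t,0)\|^2_{L^2(\Omega)}\le C(|E_1-E_2|+\|\zeta(t)\|_X)^2$. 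This is not of the form $(\cdots)\|\zeta(t)\|_X$, so dividing by $\|\zeta\|_X$ leaves $|E_1-E_2|^2/\|\zeta\|_X$. That term is uncontrolled near $t=0$, where $\zeta(0)=0$.
\item In the mild formula the inflow enters linearly pointwise in $a$. But $\|\zeta(t)\|_X$ is an $L^2$-norm in $a$, i.e.\ in $s=t-a$. You therefore only get $\|\zeta(t)\|_X\le C\big(\int_{(t-a_{\max})^+}^t\|\zeta(s,0)\|^2_{L^2(\Omega)}\,ds\big)^{1/2}$, which is the same half-power loss.
\end{itemize}

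The loss cannot be avoided. Take $n=1$, $\mathcal A=\mathscr K=K_1=F=0$, $y^0\equiv1$, $\mathcal B(E,\alpha,x)=E$, $a_{\max}>T$, $E_2\equiv0$, and $E_1(t)=e^{\lambda(t-T)}$, so $R$ does not depend on $\lambda$.
\begin{itemize}
\item Everything is $x$-independent. One has $y_2=\mathbf 1_{\{a>t\}}$ and $\zeta(t,a)=b(t-a)\mathbf 1_{\{a<t\}}$ with $b(s)\ge(a_{\max}-T)E_1(s)$.
\item Hence $\|\zeta\|_{\lambda,X}\ge c\lambda^{-1/2}\|E_1-E_2\|_\lambda$. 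So the intermediate claim $\|\zeta\|_{\lambda,X}\le C_R\lambda^{-1}\|E_1-E_2\|_\lambda$, yours and the paper's, is false whatever $\chi$ is.
\item For $\chi(a,x)=a^{-1/4}\in V$ one gets $\|\mathcal F(E_1)-\mathcal F(E_2)\|_\lambda\ge c\lambda^{-3/4}\|E_1-E_2\|_\lambda$.
\end{itemize}
Your $\lambda^{-1/2}$ bound is the correct general result. As you note, it is all the contraction in Theorem~\ref{thm:closed-loop-state-wp} needs.

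To recover the $1/\lambda$ rate you need an extra assumption, e.g.\ $\chi\in L^\infty(\mathcal I_a;L^2(\Omega))$. This is implied by the condition $\partial_a\chi\in X$ used in Lemma~\ref{lem:Ek-H1}. You must also pair with $\chi$ before taking norms.
\begin{itemize}
\item Split $\zeta=\zeta^{(1)}+\zeta^{(2)}$. Let $\zeta^{(2)}$ carry the interior source $f:=(C_{E_2,y_2,u}-C_{E_1,y_1,u})y_2$ with zero inflow. For $\zeta^{(2)}$ the $L^1$ energy inequality is valid, since $f$ enters only through $2\|f\|_X\|\zeta^{(2)}\|_X$.
\item Then $\zeta^{(1)}$ solves the homogeneous problem with inflow $(\mathcal R_{E_1}-\mathcal R_{E_2})y_2+\mathcal R_{E_1}\zeta$. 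Along characteristics $\|\zeta^{(1)}(t,a)\|_{L^2(\Omega)}\le C\|\zeta^{(1)}(t-a,0)\|_{L^2(\Omega)}$.
\item Consequently $e^{-\lambda t}|(\chi,\zeta^{(1)}(t))_X|\le C\|\chi\|_{L^\infty(\mathcal I_a;L^2(\Omega))}\lambda^{-1}(\|E_1-E_2\|_\lambda+\|\zeta\|_{\lambda,X})$.
\item The $\|\zeta\|_{\lambda,X}$ terms, here and in $f$, are controlled by your $\lambda^{-1/2}$ bound.
\end{itemize}
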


\begin{proof}
With $\zeta=y_1-y_2$, the energy estimate gives
$\|\zeta(t)\|_X\le C_R\int_0^t(\|\zeta(s)\|_X+|E_1(s)-E_2(s)|)\,ds$.
Multiplying by $e^{-\lambda t}$ and using
$e^{-\lambda t}\int_0^t e^{\lambda s}ds\le\lambda^{-1}$ yields
$\|\zeta\|_{\lambda,X}\le\frac{C_R}{\lambda}(\|\zeta\|_{\lambda,X}+\|E_1-E_2\|_\lambda)$;
for $\lambda>2C_R$, $\|\zeta\|_{\lambda,X}\le\frac{C_R}{\lambda}\|E_1-E_2\|_\lambda$.
Since $|\mathcal F(E_1)-\mathcal F(E_2)|\le\|\chi\|_X\|\zeta\|_X$, the claim
follows.
\end{proof}

\begin{theorem}[Closed-loop state well-posedness]
\label{thm:closed-loop-state-wp}
Let Assumptions~\ref{ass:diffusion}--\ref{ass:dissipativity} hold,
$F\in L^2(0,T;V')$, $y^0\in X$, $u\in U_{\rm ad}$. Then
\eqref{eq:abstract-state-equation}--\eqref{eq:noflux-boundary} has a unique
weak solution $y(u)\in\mathcal Y$, $E_u\in C([0,T])$, and there is $C$ depending
on the data and the $L^\infty$-bounds of $U_{\rm ad}$ but not on $u$ such that
\begin{equation}
\label{eq:closed-loop-apriori}
\|y(u)\|_{\mathcal Y}
+
\|E_u\|_{C([0,T])}
\le
C
\left(
\|y^0\|_X+\|F\|_{L^2(0,T;V')}
\right).
\end{equation}
\end{theorem}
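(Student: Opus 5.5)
The plan is to combine the non-circular a priori bound of Lemma~\ref{lem:frozen-linear-wp} with a fixed-point argument on the pair $(E,w)$. Global dissipativity (Assumption~\ref{ass:dissipativity}) makes that bound independent of the solution size. The fixed point is obtained by contraction in the weighted norms $\|\cdot\|_\lambda$, $\|\cdot\|_{\lambda,X}$, and the $\lambda/C$ mechanism of Lemma~\ref{lem:feedback-volterra} supplies the contraction constant.

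\emph{Step 1 (a priori bound and invariant ball).} Any closed-loop solution is, in particular, a frozen solution with $E=E_y$ and $w=y$. Its energy estimate uses only the rate $c_0$ from \eqref{eq:global-dissipativity}, and $c_0$ holds for all $E$, all $y$, and all admissible $u$. The one remaining issue is the renewal term $\|y(t,0)\|^2\le C_R^2a_{\max}\|y\|_X^2$, whose constant $C_R$ a priori depends on $|E|$. I will handle this by first assuming, or arranging through Assumption~\ref{ass:renewal}, that $\mathcal B$ is bounded uniformly in $E$; failing that, I will note that $|E(t)|\le\|\chi\|_X\|y(t)\|_X$ and close the estimate with a bootstrap. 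Gronwall then gives
\[
\|y\|_{C([0,T];X)}+\|y\|_{L^2(0,T;V)}\le C_0\big(\|y^0\|_X+\|F\|_{L^2(0,T;V')}\big)=:M .
\]
Set $R:=M(1+\|\chi\|_X)$. Let $\mathcal M_R$ be the set of pairs $(E,w)\in C([0,T])\times C([0,T];X)$ with $\|E\|_C\le\|\chi\|_X M$ and $\|w\|_C\le M$. For $(E,w)\in\mathcal M_R$, define $\Phi(E,w):=(E_y,y)$ with $y=\mathscr T_u(E,w)$. The same dissipative energy estimate, with a constant that does not grow with $R$, shows $\Phi(\mathcal M_R)\subset\mathcal M_R$. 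The key point is that $C_0$ does not depend on $R$, which is exactly the non-circularity stressed in Remark~\ref{rem:dissipativity-noncircular}.

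\emph{Step 2 (contraction).} For $(E_i,w_i)\in\mathcal M_R$, Lemma~\ref{lem:frozen-stability} (in pointwise-in-time form, as in the proof of Lemma~\ref{lem:feedback-volterra}) gives
\[
\|y_1(t)-y_2(t)\|_X^2\le C_R\int_0^t\big(|E_1-E_2|^2+\|w_1-w_2\|_X^2\big)ds .
\]
Multiplying by $e^{-2\lambda t}$ yields $\|y_1-y_2\|_{\lambda,X}\le C_R(2\lambda)^{-1/2}\big(\|E_1-E_2\|_\lambda+\|w_1-w_2\|_{\lambda,X}\big)$. Since $|E_{y_1}-E_{y_2}|\le\|\chi\|_X\|y_1-y_2\|_X$, $\Phi$ is a strict contraction on the closed set $\mathcal M_R$ once $\lambda$ is large. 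The weighted norms are equivalent to the sup norms, so Banach's theorem gives a unique fixed point in $\mathcal M_R$. Uniqueness among all weak solutions in $\mathcal Y$ follows because every solution satisfies the Step 1 bound and therefore lies in $\mathcal M_R$; alternatively, one can apply the same contraction estimate to two solutions with $R$ taken as their common bound.

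\emph{Step 3 (regularity and estimate).} At the fixed point, $y=\mathscr T_u(E_y,y)$. Lemma~\ref{lem:frozen-linear-wp} then gives $y\in\mathcal Y$ together with \eqref{eq:frozen-estimate} for $G=F$, with a constant depending only on $c_0$, $R$ and $\|u\|_\infty\le\max_\ell\max(|u_{\min,\ell}|,|u_{\max,\ell}|)$, hence not on $u$. Continuity of $E_u=E_y$ follows from $y\in C([0,T];X)$ and $\chi\in X$, with $\|E_u\|_C\le\|\chi\|_X\|y\|_{C(X)}$. Adding these bounds gives \eqref{eq:closed-loop-apriori}.

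The main obstacle is Step 1. One must check that every constant entering the frozen energy estimate, especially the renewal constant $C_R$ and the $V'$ bound on the nonlocal term, is controlled independently of the ball radius. Otherwise the invariant ball argument becomes circular and only yields a local-in-time result, as noted in Remark~\ref{rem:dissipativity-noncircular}.
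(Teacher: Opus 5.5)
Your plan follows the paper's proof step for step: an invariant ball built from the size-independent frozen estimate of Lemma~\ref{lem:frozen-linear-wp}, contraction of $(E,w)\mapsto(E_{\mathscr T_u(E,w)},\mathscr T_u(E,w))$ in the weighted norm via Lemma~\ref{lem:frozen-stability}, the bound \eqref{eq:closed-loop-apriori} read off at the fixed point, and Gronwall for uniqueness. The worry you raise about the renewal constant $C_R$ is genuine, and the paper passes over it too, asserting an $R$-independent constant although \eqref{eq:renewal-estimate} depends on $\|E\|_C$; your bootstrap fallback cannot repair it, because a birth law growing in $E$ (scalar case with $\mathcal A=\mu$, $\mathcal B=\beta(1+E)$, $\chi\equiv1$, and large spatially constant data supported at young ages) gives $P'\ge\beta|\Omega|P^2-\mu P$ for $P(t)=\int_{\mathcal I_a}y(t,a)\,da$ until mass reaches $a_{\max}$, hence blow-up in finite time, so both arguments need $\sup_E\|\mathcal B(E,\cdot,\cdot)\|_{L^\infty}<\infty$ (as in Assumption~\ref{ass:compactness-data}) or must be stated locally in time.
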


\begin{proof}
By Lemma~\ref{lem:frozen-linear-wp}, $\mathscr T_u$ is well defined, and by
\eqref{eq:frozen-estimate} with the $R$-independent constant (global
dissipativity), the data-bound $R_0:=2C(\|y^0\|_X+\|F\|)$ makes the ball
$\mathbb B_{R_0}=\{(E,w):\|E\|_C+\|w\|_C\le R_0\}$ invariant under
$\mathscr M_u(E,w):=(E_{\mathscr T_u(E,w)},\mathscr T_u(E,w))$. 
By Lemma~\ref{lem:frozen-stability} and the weighted-norm computation of
Lemma~\ref{lem:feedback-volterra}, $\mathscr M_u$ is a contraction in
$\|\cdot\|_\lambda$ for $\lambda$ large, giving a unique fixed point
$(E_u,y_u)$, which solves the closed-loop system. The bound
\eqref{eq:closed-loop-apriori} is \eqref{eq:frozen-estimate} at the fixed
point. Uniqueness follows from a Gronwall argument on the difference of two
closed-loop solutions.
\end{proof}

As illustrated in Figure~\ref{fig:feedback_architecture}, the closed-loop feedback architecture maps the nonlinear control-to-state relations, highlighting the endogenous feedback closure. This block diagram explicitly tracks the exact dependencies of the fixed-point mapping $y = \mathscr{T}_u(E, y)$, which is crucial to justify the weighted Volterra contraction estimate in the $\|\cdot\|_\lambda$ norm.

\begin{figure}[htbp]
    \centering
    \includegraphics[width=\textwidth]{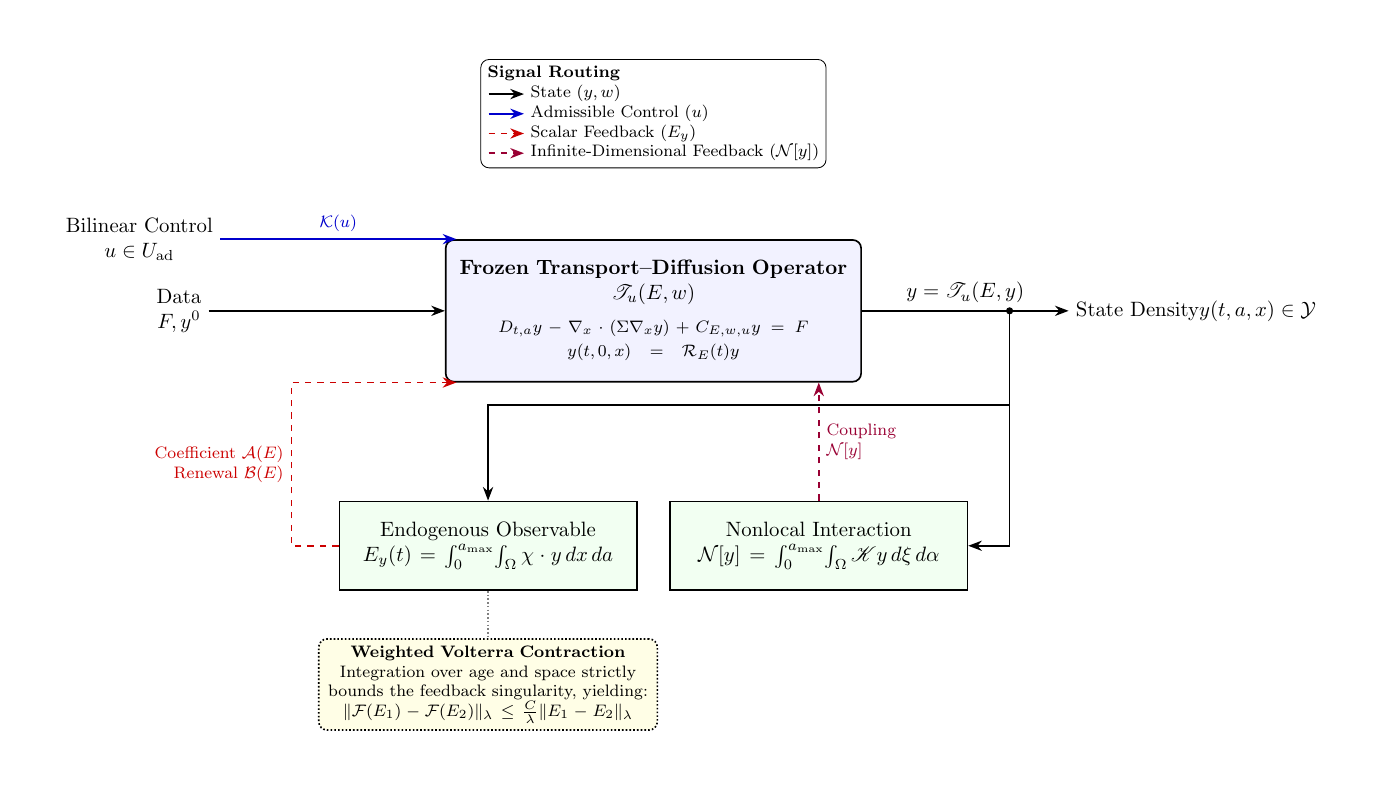} 
    \caption{The closed-loop feedback architecture. This operator block diagram maps the nonlinear control-to-state map and highlights the endogenous feedback closure. Distinct pathways illustrate the state variables ($y$), the endogenous observable ($E_y(t)$), and the bilinear control ($u$). Tracking these exact dependencies of the fixed-point mapping $y = \mathscr{T}_u(E, y)$ justifies the weighted Volterra contraction estimate via the delay/integral nature of the observable.}
    \label{fig:feedback_architecture}
\end{figure}

\subsection{Control-to-state differentiability}
\label{subsec:control-to-state-differentiability}
Extend the state equation to an open $L^\infty$-neighborhood
$\mathcal U\supset U_{\rm ad}$ with a uniform bound, so
Theorem~\ref{thm:closed-loop-state-wp} applies on $\mathcal U$. Fix
$\bar u\in\mathcal U$, $\bar y=S(\bar u)$, $\bar E=E_{\bar y}$. For
$h\in L^\infty(Q;\R^m)$ the derivative $z=S'(\bar u)h$ solves
\begin{equation}
\label{eq:linearized-state}
D_{t,a}z
-\nabla_x\cdot(\Sigma\nabla_x z)
+
\mathcal A(\bar E)z
+
\mathcal N[\bar y]z
+
\mathcal N[z]\bar y
+
\mathcal K(\bar u)z
+
\mathcal K(h)\bar y
+
\partial_E \mathcal A(\bar E)\delta E(t)\bar y
=
0
\end{equation}
with $\delta E(t)=\int_{\mathcal I_a}\int_\Omega\chi\cdot z$, linearized renewal
\begin{equation}
\label{eq:linearized-renewal}
z(t,0,x)
=
\int_{\mathcal I_a}
\mathcal B(\bar E)z\,d\alpha
+
\delta E(t)
\int_{\mathcal I_a}
\partial_E \mathcal B(\bar E)\bar y\,d\alpha ,
\end{equation}
and $z(0)=0$.

\begin{lemma}[Well-posedness of the linearized system]
\label{lem:linearized-wp}
For every $h\in L^\infty(Q;\R^m)$ the system
\eqref{eq:linearized-state}--\eqref{eq:linearized-renewal} has a unique
solution $z\in\mathcal Y$, with
$\|z\|_{\mathcal Y}+\|\delta E\|_{C([0,T])}\le C\|h\|_{L^\infty(Q)}$.
\end{lemma}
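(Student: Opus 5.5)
The plan is to treat the linearized system as a linear closed-loop problem of exactly the type handled in Theorem~\ref{thm:closed-loop-state-wp}, with the frozen Lemma~\ref{lem:frozen-linear-wp} as the basic solver and a weighted Volterra contraction closing the loop. We freeze the two nonlocal/feedback quantities that make \eqref{eq:linearized-state}--\eqref{eq:linearized-renewal} nonstandard: the scalar $\delta E\in C([0,T])$ and the function $w\in C([0,T];X)$ entering $\mathcal N[w]\bar y$. For given $(\theta,w)$ we solve the frozen linear problem in the unknown $z$:
\[
D_{t,a}z-\nabla_x\cdot(\Sigma\nabla_x z)+\bigl(\mathcal A(\bar E)+\mathcal N[\bar y]+\mathcal K(\bar u)\bigr)z
=G_{\theta,w}:=-\mathcal N[w]\bar y-\mathcal K(h)\bar y-\theta(t)\,\partial_E\mathcal A(\bar E)\bar y .
\]
It carries the inhomogeneous renewal condition $z(t,0)=\mathcal R_{\bar E}(t)z(t)+\theta(t)\beta(t)$, where $\beta(t):=\int_{\mathcal I_a}\partial_E\mathcal B(\bar E)\bar y\,d\alpha$, and the initial condition $z(0)=0$. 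The zeroth-order coefficient is exactly $C_{\bar E,\bar y,\bar u}$. Since $\bar E$ and $\bar y$ are bounded in $C$ by \eqref{eq:closed-loop-apriori}, the dissipativity Assumption~\ref{ass:dissipativity} applies to it.

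Two adjustments to Lemma~\ref{lem:frozen-linear-wp} are needed.

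\emph{(a) Source bounds.} Using Assumptions~\ref{ass:feedback-coefficients} and \ref{ass:nonlocal} and $\bar y\in C([0,T];X)$, we get
\[
\|G_{\theta,w}(t)\|_{X}\le C\bigl(\|w(t)\|_X+\|h\|_\infty+|\theta(t)|\bigr)\|\bar y(t)\|_X ,
\]
so $G_{\theta,w}\in L^2(0,T;X)\subset L^2(0,T;V')$.

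\emph{(b) Inhomogeneous renewal datum.} The term $\theta\beta$ has $\|\beta(t)\|_{L^2(\Omega)}\le C\|\bar y(t)\|_X$ by Assumption~\ref{ass:renewal}, as in \eqref{eq:renewal-estimate}. In the energy identity the inflow term becomes $\tfrac12\|\mathcal R_{\bar E}z+\theta\beta\|^2\le C\|z\|_X^2+C|\theta|^2$, and Gronwall still closes. The characteristic construction is unchanged, because only the inflow trace for $s>0$ is modified by an $L^2$ datum. Both bounds are routine modifications.

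These give a map $\Phi(\theta,w)=(\delta E_z,z)$ with $\delta E_z(t)=\int\chi\cdot z(t)$, together with the stability estimate obtained from linearity and the energy estimate (as in Lemma~\ref{lem:frozen-stability}):
\[
\|z_1(t)-z_2(t)\|_X^2\le C\int_0^t\bigl(|\theta_1-\theta_2|^2+\|w_1-w_2\|_X^2\bigr)\,ds .
\]
The Volterra structure of this bound is essential. Note that the dissipativity constant is used only for the fixed coefficient $C_{\bar E,\bar y,\bar u}$, so no circularity arises. Repeating the computation of Lemma~\ref{lem:feedback-volterra} in the norms $\|\cdot\|_\lambda$ and $\|\cdot\|_{\lambda,X}$, with the Cauchy--Schwarz version $e^{-\lambda t}(\int_0^t e^{2\lambda s}ds)^{1/2}\le(2\lambda)^{-1/2}$, gives
\[
\|\Phi(\theta_1,w_1)-\Phi(\theta_2,w_2)\|_\lambda\le C\lambda^{-1/2}\bigl(\|\theta_1-\theta_2\|_\lambda+\|w_1-w_2\|_{\lambda,X}\bigr).
\]
Because the problem is linear, the constant is global, so $\Phi$ is a contraction on all of $C([0,T])\times C([0,T];X)$ for $\lambda$ large. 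No invariant ball is needed. Banach's theorem yields a unique fixed point, which is the unique solution $z\in\mathcal Y$.

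For the estimate, the fixed point satisfies $\|(\delta E,z)\|_\lambda\le C\lambda^{-1/2}\|(\delta E,z)\|_\lambda+\|\Phi(0,0)\|_\lambda$. Lemma~\ref{lem:frozen-linear-wp} together with (a) gives $\|\Phi(0,0)\|\le C\|h\|_\infty\|\bar y\|_{C(X)}\le C\|h\|_{L^\infty(Q)}$. Absorbing, and then applying \eqref{eq:frozen-estimate} once more with the now-bounded source, yields the $\mathcal Y$-bound and the bound on $\|\delta E\|_{C}$. Uniqueness in $\mathcal Y$ can also be read off from Gronwall applied to the difference of two solutions with $h=0$.

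The main obstacle is step (b): Lemma~\ref{lem:frozen-linear-wp} is stated only for homogeneous renewal $y(t,0)=\mathcal R_E y$. I will treat the inhomogeneous renewal by rerunning the energy and characteristic arguments with the extra inflow datum $\theta\beta\in L^2((0,T)\times\Omega)$, rather than by lifting it into the interior, since a lifting would cost regularity in $a$. A secondary point is that $\mathcal N[w]\bar y$ needs $\bar y\in C([0,T];X)$ pointwise in $t$ to give an $X$-valued source. This holds precisely because of the time continuity supplied by Theorem~\ref{thm:closed-loop-state-wp}.
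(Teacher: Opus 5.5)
Your proof is correct, but it is organized differently from the paper's. The paper gives a direct a priori estimate on the full linearized system. It treats $\mathcal N[z]\bar y$, $\partial_E\mathcal A(\bar E)\delta E\,\bar y$ and the renewal feedback term $\delta E\,b_{\bar y}$ as lower-order perturbations bounded by $C\|z\|_X$ (via $|\delta E|\le\|\chi\|_X\|z\|_X$), applies the energy estimate of Lemma~\ref{lem:frozen-linear-wp}, and closes with Gronwall. That yields the bound and uniqueness in one stroke, while existence is only asserted from ``the structure of the frozen problem''. You instead freeze $(\delta E,w)$, solve with the fixed coefficient $C_{\bar E,\bar y,\bar u}$, and close by a Banach contraction in the weighted norms. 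In effect you rerun the scheme of Theorem~\ref{thm:closed-loop-state-wp}, with the simplification that linearity makes the contraction global, so no invariant ball is needed.

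The estimates underlying the two arguments are the same, so what your route adds is mainly explicitness:
\begin{itemize}
\item You construct the solution rather than asserting existence.
\item You treat the inhomogeneous renewal datum $\delta E(t)\,b_{\bar y}$ explicitly. Lemma~\ref{lem:frozen-linear-wp} as stated does not cover it, and the paper's proof passes over it.
\item Your squared Volterra bound, with rate $(2\lambda)^{-1/2}$, is the form the energy inequality actually delivers. A renewal defect enters the inflow term quadratically, not as a product with $\|z\|_X$.
\end{itemize}
The paper's route is shorter.

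Two small remarks. First, $\bar y\in L^\infty(0,T;X)$ already suffices for your source bounds, so time continuity is not really what makes $\mathcal N[w]\bar y$ an $X$-valued source. Second, $\bar u$ is taken in the neighborhood $\mathcal U$ rather than in $U_{\rm ad}$. It is therefore cleaner to note that for this linear frozen problem one-sided dissipativity holds trivially with $c_0=\|C_{\bar E,\bar y,\bar u}\|_{L^\infty}$, rather than to invoke Assumption~\ref{ass:dissipativity}.
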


\begin{proof}
The system has the transport--diffusion--renewal structure of the frozen
problem, with extra lower-order terms $\mathcal N[z]\bar y$,
$\partial_E \mathcal A(\bar E)\delta E\,\bar y$ obeying
$\|\mathcal N[z]\bar y\|_{V'}\le C\|z\|_X$,
$\|\partial_E \mathcal A(\bar E)\delta E\,\bar y\|_{V'}\le C|\delta E|$,
$|\delta E|\le\|\chi\|_X\|z\|_X$, and a feedback renewal term bounded by
$C|\delta E|$. The energy estimate of Lemma~\ref{lem:frozen-linear-wp} and
Gronwall give the claim.
\end{proof}

\begin{theorem}[Fr\'echet differentiability]
\label{thm:control-to-state-differentiability}
Under Assumptions~\ref{ass:diffusion}--\ref{ass:dissipativity} (with $\mathcal A, \mathcal B$
$C^2$ in $E$), $S:\mathcal U\to\mathcal Y$ is Fr\'echet differentiable, with
$S'(\bar u)h=z$ the solution of
\eqref{eq:linearized-state}--\eqref{eq:linearized-renewal}, and
$\|S'(\bar u)h\|_{\mathcal Y}\le C\|h\|_{L^\infty(Q)}$.
\end{theorem}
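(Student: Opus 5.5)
The proof will be a direct remainder estimate. Fix $\bar u\in\mathcal U$ and $h\in L^\infty(Q;\R^m)$ with $\bar u+h\in\mathcal U$. Write $y_h:=S(\bar u+h)$, $E_h:=E_{y_h}$, and $z:=S'(\bar u)h$, the solution given by Lemma~\ref{lem:linearized-wp}. Set the remainder $r:=y_h-\bar y-z$ and $\delta_r(t):=E_h(t)-\bar E(t)-\delta E(t)=\int\chi\cdot r$. The goal is to show $\|r\|_{\mathcal Y}=o(\|h\|_{L^\infty})$. Linearity and boundedness of $h\mapsto z$ already come from Lemma~\ref{lem:linearized-wp}, which gives the bound $\|S'(\bar u)h\|_{\mathcal Y}\le C\|h\|_\infty$.

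\emph{Step 1 (Lipschitz stability).} First I will show
\[
\|y_h-\bar y\|_{\mathcal Y}+\|E_h-\bar E\|_C\le C\|h\|_\infty .
\]
The difference $w=y_h-\bar y$ solves a frozen problem \eqref{eq:frozen-linear-equation} with coefficient $C_{\bar E,\bar y,\bar u}$ and the following extra terms:
\begin{itemize}
\item the source $-\mathcal K(h)y_h$;
\item the source $-[\mathcal A(E_h)-\mathcal A(\bar E)]y_h$;
\item the source $-\mathcal N[w]y_h$;
\item the renewal defect $(\mathcal R_{E_h}-\mathcal R_{\bar E})y_h$.
\end{itemize}
By Theorem~\ref{thm:closed-loop-state-wp}, the norms $\|y_h\|_{\mathcal Y}$ and $\|E_h\|_C$ are bounded uniformly on $\mathcal U$, so every $E$-argument lies in a fixed ball $|E|\le R$. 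Assumptions~\ref{ass:feedback-coefficients}--\ref{ass:nonlocal} then make each source Lipschitz. The energy estimate of Lemma~\ref{lem:frozen-linear-wp}, together with $|E_h-\bar E|\le\|\chi\|_X\|w\|_X$ and Gronwall (exactly as in Lemmas~\ref{lem:frozen-stability} and \ref{lem:feedback-volterra}), gives the claimed bound.

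\emph{Step 2 (equation for the remainder).} Subtracting \eqref{eq:linearized-state}--\eqref{eq:linearized-renewal} from the equation for $w$, I find that $r$ solves the linearized operator, i.e.\ the left-hand side of \eqref{eq:linearized-state} with $\delta E$ replaced by $\delta_r$, with zero initial data. Its right-hand side consists of quadratic residuals:
\begin{itemize}
\item $\rho_1=-\mathcal K(h)w$;
\item $\rho_2=-\mathcal N[w]w$;
\item $\rho_3=-\big[\mathcal A(E_h)-\mathcal A(\bar E)-\partial_E\mathcal A(\bar E)(E_h-\bar E)\big]y_h$;
\item $\rho_4=-\partial_E\mathcal A(\bar E)(E_h-\bar E)\,w$.
\end{itemize}
The renewal residual is $\beta=\int[\mathcal B(E_h)-\mathcal B(\bar E)-\partial_E\mathcal B(\bar E)(E_h-\bar E)]y_h\,d\alpha+(E_h-\bar E)\int\partial_E\mathcal B(\bar E)w\,d\alpha$. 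Taylor's formula with the $C^2$ bound \eqref{eq:A-bounds} on $|E|\le R$ gives $|\mathcal A(E_h)-\mathcal A(\bar E)-\partial_E\mathcal A(\bar E)(E_h-\bar E)|\le C_R|E_h-\bar E|^2$, and similarly for $\mathcal B$. For the other terms I use $\|\mathcal K(h)w\|_{L^2(X)}\le C\|h\|_\infty\|w\|_{L^2(X)}$ and $\|\mathcal N[w]w\|_X\le C_{\mathscr K}\|w\|_X^2$. With Step 1, each of $\rho_1,\dots,\rho_4$ is bounded in $L^2(0,T;X)\subset L^2(0,T;V')$ by $C\|h\|_\infty^2$, and $\beta$ is bounded in $L^2((0,T)\times\Omega)$ by $C\|h\|_\infty^2$.

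\emph{Step 3 (closing the estimate).} I will apply the a priori estimate of Lemma~\ref{lem:linearized-wp}, extended to an inhomogeneous source $G$ and an inhomogeneous renewal term $\beta$. The energy identity of Lemma~\ref{lem:frozen-linear-wp} handles $\beta$ through the inflow term $\tfrac12\|r(t,0)\|^2$, and Gronwall then gives
\[
\|r\|_{\mathcal Y}\le C\big(\|G\|_{L^2(V')}+\|\beta\|_{L^2}\big)\le C\|h\|_\infty^2 ,
\]
which is Fréchet differentiability. The step I expect to be the main obstacle is this one: Lemma~\ref{lem:linearized-wp} is stated only for homogeneous renewal-type data. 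The trace at $a=0$ enters the energy identity with the wrong sign, so $\beta$ has to be absorbed through $\|r(t,0)\|^2\le 2C\|r(t)\|_X^2+2\|\beta(t)\|^2$ before Gronwall is applied. The $C([0,T];X)$ part of the $\mathcal Y$-norm also has to be recovered from the characteristic mild representation with the inhomogeneous inflow. I would first redo the frozen estimate with an added boundary datum $\beta\in L^2((0,T)\times\Omega)$ and check that the constant depends only on the uniform bound $R$.
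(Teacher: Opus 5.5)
Your proposal is correct and follows essentially the same route as the paper: the paper forms the difference quotient $z_\varepsilon=(y_\varepsilon-\bar y)/\varepsilon$, shows it converges to $z$, and then replaces $\varepsilon h$ by a general perturbation to pass from G\^ateaux to Fr\'echet differentiability. That is exactly your remainder $r=y_h-\bar y-z$, and your version makes the quadratic residuals and the bound $\|r\|_{\mathcal Y}\le C\|h\|_\infty^2$ explicit. For Step 3 you need only an a priori estimate, not a new existence theory with inhomogeneous inflow: $r$ already lies in $\mathcal Y$ as a difference of elements of $\mathcal Y$, so the sup-in-time energy bound controls its $C([0,T];X)$ norm without revisiting the mild representation.
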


\begin{proof}
Let $z_\varepsilon=(y_\varepsilon-\bar y)/\varepsilon$,
$y_\varepsilon=S(\bar u+\varepsilon h)$. Expanding the coefficient, renewal,
nonlocal, and control increments as in the standard difference-quotient
computation, the remainders vanish in the energy norms by the uniform
Lipschitz stability of the state equation, and $z_\varepsilon-z$ solves the
linearized system with vanishing data; Lemma~\ref{lem:linearized-wp} gives
$z_\varepsilon\to z$ in $\mathcal Y$. Replacing $\varepsilon h$ by a general
perturbation upgrades G\^ateaux to Fr\'echet differentiability.
\end{proof}

\begin{proposition}[$C^1$-regularity]
\label{prop:C1-state-map}
$S:\mathcal U\to\mathcal Y$ is of class $C^1$.
\end{proposition}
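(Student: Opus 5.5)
We need continuity of $u\mapsto S'(u)$ in $\mathcal L(L^\infty(Q;\R^m),\mathcal Y)$. Fix $u_1,u_2\in\mathcal U$ with states $y_i=S(u_i)$ and observables $E_i=E_{y_i}$. For $h\in L^\infty$ with $\|h\|_\infty\le1$, let $z_i=S'(u_i)h$, which solves \eqref{eq:linearized-state}--\eqref{eq:linearized-renewal} with $(\bar u,\bar y,\bar E)=(u_i,y_i,E_i)$. The aim is the Lipschitz-type estimate
\[
\sup_{\|h\|_\infty\le1}\|z_1-z_2\|_{\mathcal Y}\le C\big(\|u_1-u_2\|_{L^\infty}+\|y_1-y_2\|_{C([0,T];X)}+\|E_1-E_2\|_{C([0,T])}\big).
\]
Combined with continuity of $S$ into $\mathcal Y$, this gives continuity of $S'$.

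Continuity of $S$ comes first. By Lemma~\ref{lem:frozen-stability} applied at the two fixed points, and treating the control difference as an extra source $\mathcal K(u_1-u_2)y_2$ with $V'$-norm at most $C\|u_1-u_2\|_\infty\|y_2\|_X$, we obtain
\[
\|y_1-y_2\|_{C(X)}+\|y_1-y_2\|_{L^2(V)}+\|E_1-E_2\|_C\le C\|u_1-u_2\|_\infty .
\]
This step uses Gronwall, with $|E_1-E_2|\le\|\chi\|_X\|y_1-y_2\|_X$ closing the loop exactly as in Lemma~\ref{lem:feedback-volterra}. The uniform bound \eqref{eq:closed-loop-apriori} on $\mathcal U$ provides the common radius $R$.

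Next, set $\zeta=z_1-z_2$ and write its equation as the linearized operator at $(u_2,y_2,E_2)$ applied to $\zeta$, equal to a defect $G$. The defect collects the differences
\[
(\mathcal A(E_2)-\mathcal A(E_1))z_1,\quad \mathcal N[y_2-y_1]z_1,\quad \mathcal N[z_1](y_2-y_1),\quad \mathcal K(u_2-u_1)z_1,\quad \mathcal K(h)(y_2-y_1),
\]
together with the feedback-coefficient difference
\[
\partial_E\mathcal A(E_2)\delta E_2\,y_2-\partial_E\mathcal A(E_1)\delta E_1\,y_1,
\]
split as a $\zeta$-dependent part plus $[\partial_E\mathcal A(E_2)y_2-\partial_E\mathcal A(E_1)y_1]\delta E_1$. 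The renewal defect is treated in the same way, using $\mathcal B$ and $\partial_E\mathcal B$. Each term is bounded using the $C^2$ bounds in $E$ from Assumptions~\ref{ass:feedback-coefficients}--\ref{ass:renewal}, which make $\partial_E\mathcal A$ and $\partial_E\mathcal B$ Lipschitz in $E$. The $L^\infty$ bounds from Assumption~\ref{ass:nonlocal} and the bound $\|z_1\|_{\mathcal Y}+\|\delta E_1\|_C\le C$ from Lemma~\ref{lem:linearized-wp} complete the estimates. The result is a defect norm of order $C(\|E_1-E_2\|_C+\|y_1-y_2\|_{C(X)}+\|u_1-u_2\|_\infty)$. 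The energy estimate of Lemma~\ref{lem:frozen-linear-wp} plus Gronwall, absorbing the $\zeta$-dependent feedback terms through $|\delta E_\zeta|\le\|\chi\|_X\|\zeta\|_X$ as in Lemma~\ref{lem:linearized-wp}, then gives the displayed estimate uniformly in $h$.

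\textbf{Main obstacle.} The delicate point is the product terms in which an $X$-valued factor multiplies another function, such as $(\mathcal A(E_2)-\mathcal A(E_1))z_1$, $\mathcal N[y_2-y_1]z_1$, and especially $\mathcal K(h)(y_2-y_1)$ and $\partial_E\mathcal A\,\delta E\,(y_2-y_1)$. These must be controlled in $L^2(0,T;V')$, and it suffices to bound them in $L^2(0,T;X)\hookrightarrow L^2(0,T;V')$. Each factor is either a coefficient in $L^\infty$ (for instance $\mathcal N[\cdot]\in L^\infty$ by Assumption~\ref{ass:nonlocal}) or an $X$-valued function, so only $C([0,T];X)$ control of $y_1-y_2$ is needed and no higher integrability is required. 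I would check this term by term. The renewal defect is controlled in $L^2((0,T)\times\Omega)$ via \eqref{eq:renewal-estimate}. Together these steps establish continuity of $S'$, so $S$ is $C^1$.
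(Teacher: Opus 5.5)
Your proposal is correct and follows the same route as the paper. Like the paper, you first obtain continuity of $u\mapsto (S(u),E_{S(u)})$ from the stability estimate, then subtract the two linearized systems and bound the coefficient and renewal defects, uniformly in $\|h\|_\infty\le 1$, with the energy estimate and Gronwall. Your version simply spells out the term-by-term bounds the paper leaves implicit, and it gives local Lipschitz continuity of $S'$ using only $C([0,T];X)$ control of $y_1-y_2$ rather than the full $\mathcal Y$-norm.
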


\begin{proof}
If $u_k\to u$ in $L^\infty$, the nonlinear stability estimate gives
$\|S(u_k)-S(u)\|_{\mathcal Y}+\|E_{y_k}-E_y\|_C\to0$; subtracting linearized
equations and using these convergences yields
$\|S'(u_k)-S'(u)\|_{\mathcal L(L^\infty,\mathcal Y)}\to0$.
\end{proof}

\section{Singular adjoints and low-rank feedback perturbations}
\label{sec:singular-adjoint-feedback}
Fix $\bar u\in U_{\rm ad}$, $\bar y=S(\bar u)$, $\bar E=E_{\bar y}$. Let
\[
\mathscr A_{\rm red}z
:=
D_{t,a}z
-\nabla_x\cdot(\Sigma\nabla_x z)
+
\mathcal A(\bar E)z
+
\mathcal N[\bar y]z
+
\mathcal N[z]\bar y
+
\mathcal K(\bar u)z ,
\]
\[
\mathscr B_{\rm red}z
:=
z(t,0,x)
-
\int_{\mathcal I_a}
\mathcal B(\bar E)z\,d\alpha .
\]
The full linearized equation reads $\mathscr A_{\rm red}z+\delta E_z(t)
c_{\bar y}+\mathcal K(h)\bar y=0$, $\mathscr B_{\rm red}z=\delta E_z(t)
b_{\bar y}$, where
\[
\delta E_z(t):=\int_{\mathcal I_a}\int_\Omega\chi\cdot z,
\quad
c_{\bar y}:=\partial_E\mathcal A(\bar E)\bar y,
\quad
b_{\bar y}(t,x):=\int_{\mathcal I_a}\partial_E\mathcal B(\bar E)\bar y\,d\alpha .
\]

\subsection{Observation models}
\label{subsec:observation-models}
\begin{assumption}[Observation regimes]
\label{ass:observation-regimes}
The analysis is carried out in one of the following regimes.

\emph{(O1) Time-averaged localized (default).} With weights
$\rho_m\in L^2(0,T)$ and $\eta_m\in X$,
\[
\mathcal O_m y
=
\int_0^T\rho_m(t)\,(\eta_m,y(t))_X\,dt
=
(\Theta_m,y)_{L^2(Q)},
\qquad
\Theta_m(t,a,x):=\rho_m(t)\eta_m(a,x)\in L^2(Q).
\]
Each $\mathcal O_m$ is bounded on $L^2(Q)$, hence continuous under weak
$L^2(Q)$ convergence.

\emph{(O2) Instantaneous localized (conditional).} $\mathcal O_m
y=(\eta_m,y(t_m))_X$ at fixed $t_m$. Admitted only when one has compactness in
$C([0,T];X)$, so that $y_k(t_m)\to y(t_m)$ in $X$; this requires control of
$\partial_t y_k$ in a negative space, which the energy theory does not supply,
and so must be furnished separately.

\emph{(O3) Pointwise (conditional, strong regime).} $S(\mathcal U)\subset
\mathcal Y_\sharp\hookrightarrow C(\overline Q;\R^n)$ and the reduced adjoint
maps point sources into $\mathcal P_\sharp\hookrightarrow L^2(Q;\R^n)$ with
$\mathcal K^*(\bar y,\bar p)\in L^2(Q)$; then
$\mathcal O_m y=e_I\cdot y(t_m,a_m,x_m)$.
\end{assumption}

\paragraph{Default (O1) source.}
With surveillance cost
$J_{\rm obs}(y)=\tfrac12\sum_{m=1}^M|\mathcal O_m y-d_m|^2$ and residuals
$r_m=\mathcal O_m\bar y-d_m$, the derivative is
$\displaystyle J_{\rm obs}'(\bar y)z
=
\sum_{m=1}^M r_m(\Theta_m,z)_{L^2(Q)}$,
and the observation source is the genuine $L^2(Q)$ function
$\displaystyle q_{\rm obs}
=
\sum_{m=1}^M r_m\,\Theta_m
=
\sum_{m=1}^M r_m\,\rho_m(t)\eta_m(a,x)\in L^2(Q)$.
No time-measure appears; the adjoint source is non-singular.

\begin{remark}[Conditional (O3) sufficient condition]
\label{rem:point-sufficient}
Regime (O3) holds, for instance, under age regularization
$\partial_t y+\partial_a y-\varepsilon\partial_{aa}y-\nabla_x\cdot(\Sigma
\nabla_x y)$, $\varepsilon>0$, which is uniformly parabolic and second order in
the \emph{joint} variable $(a,x)\in\R^{d+1}$. If
$y\in W^{1,p}(0,T;L^p(\mathcal I_a\times\Omega))\cap  L^p(0,T;W^{2,p}(\mathcal I_a\times\Omega))$
($W^{2,p}$ jointly in $(a,x)$) for some $p>(d+3)/2$, then
$y\in C^\beta(\overline Q)$ with $\beta=2-(d+3)/p>0$, since the spatial-type
dimension is $N=d+1$ and the threshold is $(N+2)/2=(d+3)/2$ (for $d=1,2,3$,
$p>2,\tfrac52,3$). This is a different PDE from the bare hyperbolic model; we
therefore present (O3) as a conditional extension, not on a par with (O1).
\end{remark}

\subsection{Reduced adjoint equation}
\label{subsec:reduced-adjoint}
Let $q_{\rm run}=\gamma C_I^\top(C_I\bar y-y_d)\in L^2(Q)$ be the running-cost
source and $q:=q_{\rm run}+q_{\rm obs}$. The reduced adjoint is, formally,
\begin{multline}
\label{eq:formal-reduced-adjoint}
-\partial_t p-\partial_a p
-\nabla_x\cdot(\Sigma^\top\nabla_x p)
+
\mathcal A(\bar E)^\top p
+
\mathcal N[\bar y]^\top p
+
\mathcal N_{\bar y}^*p
+
\mathcal K(\bar u)^\top p
-
\mathcal R_{\bar E}^*[p(t,0,\cdot)]
=
q
\end{multline}
with $p(T,a,x)=0$, $p(t,a_{\max},x)=0$, and no-flux in $x$; here
$\mathcal R_{\bar E}^*[\zeta](t,a,x)=\mathcal B(\bar E)^\top\zeta(t,x)$ with
$\zeta=p(t,0,\cdot)$, and $\mathcal N_{\bar y}^*$ is the adjoint of
$z\mapsto\mathcal N[z]\bar y$. Equivalently, by transposition, $p_{\rm red}$
satisfies $\int_0^T\langle f,p_{\rm red}\rangle\,dt=\langle q,z_f\rangle$ for
the reduced forward solutions $z_f$ of $\mathscr A_{\rm red}z_f=f$,
$\mathscr B_{\rm red}z_f=0$, $z_f(0)=0$.

\begin{proposition}[Reduced adjoint solvability: default regime]
\label{prop:reduced-adjoint-solvability}
In regime (O1), $q=q_{\rm run}+q_{\rm obs}\in L^2(Q)\hookrightarrow
L^2(0,T;V')$, and the backward problem \eqref{eq:formal-reduced-adjoint} has a
unique solution $p_{\rm red}$ in the backward energy space
$\mathcal P:=\{p\in L^2(0,T;V)\cap  C([0,T];X):
D_{t,a}p\in L^2(0,T;V')\}$, with
\begin{equation}
\label{eq:Gstar-bound}
\|p_{\rm red}\|_{\mathcal P}=\|\mathcal G^* q\|_{\mathcal P}
\le
C\|q\|_{L^2(Q)} .
\end{equation}
\end{proposition}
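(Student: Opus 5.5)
The plan is to reduce the backward problem \eqref{eq:formal-reduced-adjoint} to a forward problem of the type treated in Lemma~\ref{lem:frozen-linear-wp}, by the time--age reflection
\[
\tilde p(s,b,x):=p(T-s,\,a_{\max}-b,\,x),\qquad 0\le s\le T,\ 0\le b\le a_{\max}.
\]
Under this change of variables $-\partial_t-\partial_a$ becomes $\partial_s+\partial_b$, the terminal condition $p(T)=0$ becomes the initial condition $\tilde p(0)=0$, and the outflow condition $p(t,a_{\max},x)=0$ becomes a homogeneous inflow condition $\tilde p(s,0,x)=0$. The diffusion part is $-\nabla_x\cdot(\Sigma^\top\nabla_x\cdot)$, which is uniformly elliptic with the same $\sigma_0$ as in Assumption~\ref{ass:diffusion}. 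The zeroth-order terms $\mathcal A(\bar E)^\top$, $\mathcal N[\bar y]^\top$ and $\mathcal K(\bar u)^\top$ are bounded, since $\bar E\in C([0,T])$ and $\bar y\in C([0,T];X)$ by Theorem~\ref{thm:closed-loop-state-wp}, and Assumption~\ref{ass:nonlocal} gives $L^\infty$ bounds on $\mathcal N[\bar y]$. The nonlocal adjoint $\mathcal N_{\bar y}^*$ is a bounded operator on $X$, uniformly in $t$, by the boundedness of $\mathscr K$. We will therefore treat it as a lower-order perturbation, exactly as $\mathcal N[z]\bar y$ is treated in Lemma~\ref{lem:linearized-wp}.

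The genuinely new term is $-\mathcal R_{\bar E}^*[p(t,0,\cdot)]=-\mathcal B(\bar E)^\top p(t,0,\cdot)$. The renewal boundary coupling of the forward problem has become an interior source involving the trace of $p$ at the terminal age $a=0$, which is the outflow boundary for the backward flow. The key step is to control this trace in the energy estimate. Testing with $p$ on $\mathcal I_a\times\Omega$ and integrating the transport term gives
\[
-\tfrac12\tfrac{d}{dt}\|p(t)\|_X^2+\tfrac12\|p(t,0,\cdot)\|_{L^2(\Omega)}^2+\sigma_0\|\nabla_x p\|^2 ,
\]
since $p(t,a_{\max})=0$. So the outflow trace appears with a good sign. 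The source term is estimated by
\[
\Big|\int \mathcal B(\bar E)^\top p(t,0,\cdot)\cdot p\Big|\le C_R a_{\max}^{1/2}\|p(t,0)\|_{L^2(\Omega)}\|p(t)\|_X ,
\]
which, via \eqref{eq:renewal-estimate} and Young's inequality, we absorb as $\tfrac14\|p(t,0)\|^2+C\|p(t)\|_X^2$. This is precisely the transpose of the forward estimate, where the inflow trace was bounded by $\|y\|_X$.

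The remaining zeroth-order terms are bounded by $C\|p\|_X^2$. Here we do not need the sign in \eqref{eq:global-dissipativity}: boundedness of the coefficients suffices, since the problem is linear in $p$. Backward Gronwall then yields bounds on $\|p\|_{C([0,T];X)}$, $\|p\|_{L^2(0,T;V)}$ and $\|p(\cdot,0,\cdot)\|_{L^2((0,T)\times\Omega)}$, all of the form $C\|q\|_{L^2(0,T;V')}\le C\|q\|_{L^2(Q)}$. The equation then bounds $D_{t,a}p$ in $L^2(0,T;V')$.

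For existence and time continuity, we follow the characteristic construction in Lemma~\ref{lem:frozen-linear-wp}. Along $t-a=\mathrm{const}$ (traversed backward), the reflected problem is a parabolic Cauchy problem driven by the adjoint evolution family of $-\nabla_x\cdot(\Sigma^\top\nabla_x\cdot)$. The interior source $\mathcal B^\top p(t,0)$ couples different characteristics only through the trace $\zeta=p(\cdot,0,\cdot)\in L^2((0,T)\times\Omega)$. We therefore set up a fixed point in $\zeta$: given $\zeta$, solve a linear problem with the source $\mathcal B^\top\zeta$ fixed, take its $a=0$ trace, and show this map is a contraction in an exponentially weighted norm $\int e^{-\lambda(T-t)}\|\zeta\|^2dt$. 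The contraction follows from the trace bound above, by the same Volterra mechanism as Lemma~\ref{lem:feedback-volterra}. Alternatively, one can obtain existence by Galerkin approximation combined with the a priori estimate and pass to weak limits, with continuity into $X$ again coming from the mild formula.

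Uniqueness follows from the energy estimate with $q=0$. Equality with the transposition definition is verified by pairing with $z_f$ and integrating by parts: the boundary terms cancel because $\mathscr B_{\rm red}z_f=0$ at $a=0$ and $p=0$ at $a=a_{\max}$ and $t=T$. The main obstacle is making this integration by parts rigorous with only $D_{t,a}$-regularity and $L^2$ traces. We would first prove the identity for smooth approximations of $q$ and $f$, and then pass to the limit using the trace bounds obtained above. The bound \eqref{eq:Gstar-bound} is then the energy estimate.
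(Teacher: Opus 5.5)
Your proposal is correct and follows the paper's route. You reflect $t\mapsto T-t$, $a\mapsto a_{\max}-a$ to get a forward transport--diffusion problem, then use the energy estimate and characteristic construction of Lemma~\ref{lem:frozen-linear-wp}. Your explicit treatment of the transposed renewal term supplies the step the paper compresses into calling it a ``bounded lower-order and boundary term.'' After reflection this term is an interior source driven by the outflow trace $p(t,0,\cdot)$, not a renewal boundary condition. You absorb it with the good-sign term $\tfrac12\|p(t,0,\cdot)\|_{L^2(\Omega)}^2$ and close existence with a weighted fixed point in that trace.
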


\begin{proof}
Equation \eqref{eq:formal-reduced-adjoint} is a backward
transport--diffusion--renewal problem with $L^2(0,T;V')$ source; reversing time
$\tau=T-t$ and age $b=a_{\max}-a$ turns it into a forward problem of the type
covered by Lemma~\ref{lem:frozen-linear-wp} (the nonlocal adjoint
$\mathcal N_{\bar y}^*$ and the renewal adjoint are bounded lower-order and
boundary terms). Existence, uniqueness, time continuity, and
\eqref{eq:Gstar-bound} follow.
\end{proof}

In regimes (O2)/(O3) the source $q_{\rm obs}$ is more singular and
Proposition~\ref{prop:reduced-adjoint-solvability} is replaced by a hypothesis:
there is a Banach space $\mathcal P$ continuously embedded in the dual of the
source space on which $\mathcal G^*$ is bounded. For (O3) this is supplied by
the smoothing regime of Remark~\ref{rem:point-sufficient}.

\subsection{Feedback derivative and the low-rank adjoint term}
\label{subsec:feedback-adjoint-term}
Define
\begin{equation}
\label{eq:ell-definition}
\ell_{\bar y,\bar u}(p)(t)
:=
-
\int_{\mathcal I_a}\int_\Omega
c_{\bar y}\cdot p\,dx\,da
+
\int_\Omega
b_{\bar y}\cdot p(t,0,x)\,dx .
\end{equation}
The duality identity
\[
\langle\delta E_z c_{\bar y},p\rangle
-
\int_0^T\!\!\int_\Omega\delta E_z(t)b_{\bar y}\cdot p(t,0,x)\,dx\,dt
=
-\int_0^T\ell_{\bar y,\bar u}(p)(t)\,\delta E_z(t)\,dt
\]
and $\delta E_z(t)=\int_{\mathcal I_a}\int_\Omega\chi\cdot z$ give
$\int_0^T\ell_{\bar y,\bar u}(p)\delta E_z\,dt=\int_Q\ell_{\bar y,\bar u}(p)(t)
\chi\cdot z$, so the feedback forcing on the adjoint is the rank-one (in
age--space) term $\ell_{\bar y,\bar u}(p)(t)\chi(a,x)$. The full
feedback-corrected adjoint is
\begin{equation}
\label{eq:feedback-corrected-adjoint-abstract}
\mathscr A_{\rm red}^*p
=
q_{\rm run}+q_{\rm obs}
+
\ell_{\bar y,\bar u}(p)(t)\chi .
\end{equation}

\subsection{Causality and unconditional solvability}
\label{subsec:causality-quasinilpotent}
Let $p_{\rm red}=\mathcal G^*(q_{\rm run}+q_{\rm obs})$. If $p$ solves
\eqref{eq:feedback-corrected-adjoint-abstract} then, by linearity,
\begin{equation}
\label{eq:p-corrected-preliminary}
p=p_{\rm red}+\mathcal G^*(\ell_{\bar y,\bar u}(p)(t)\chi).
\end{equation}
With $\theta:=\ell_{\bar y,\bar u}(p)$ and the feedback transfer operator
\[
\mathcal T:L^2(0,T)\to L^2(0,T),
\qquad
\mathcal T\theta:=\ell_{\bar y,\bar u}(\mathcal G^*(\theta(t)\chi)),
\]
applying $\ell_{\bar y,\bar u}$ gives
\begin{equation}
\label{eq:I-minus-T}
(I-\mathcal T)\theta=\ell_{\bar y,\bar u}(p_{\rm red}).
\end{equation}
For $\tau\in[0,T]$ let $P_\tau,P_\tau^{\rm c}$ be multiplication by
$\mathbf1_{(\tau,T]},\mathbf1_{[0,\tau]}$, and $M_\chi\theta:=\theta(t)\chi$,
so $\mathcal T=\ell_{\bar y,\bar u}\circ\mathcal G^*\circ M_\chi$.

\begin{lemma}[Anti-causality]
\label{lem:anticausality}
The reduced forward operator $\mathcal G$ is causal,
$P_\tau^{\rm c}\mathcal G=P_\tau^{\rm c}\mathcal G P_\tau^{\rm c}$; its
transpose is anti-causal,
$P_\tau\mathcal G^*=P_\tau\mathcal G^*P_\tau$; and $\mathcal T$ is backward
Volterra:
\begin{equation}
\label{eq:T-anticausality}
P_\tau\,\mathcal T=P_\tau\,\mathcal T\,P_\tau,
\qquad\tau\in[0,T].
\end{equation}
\end{lemma}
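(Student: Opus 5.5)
The three claims will be proved in sequence: causality of $\mathcal G$ from uniqueness of the forward problem, anti-causality of $\mathcal G^*$ by the transposition definition of $p_{\rm red}$, and the Volterra property of $\mathcal T$ by composing with the time-local maps $M_\chi$ and $\ell_{\bar y,\bar u}$.

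\emph{Causality of $\mathcal G$.} Here $\mathcal G f=z_f$ is the solution of $\mathscr A_{\rm red}z_f=f$, $\mathscr B_{\rm red}z_f=0$, $z_f(0)=0$. If $P_\tau^{\rm c}f=0$, i.e. $f=0$ on $[0,\tau]$, then $z_f$ restricted to $[0,\tau]$ solves the reduced problem with zero data on $(0,\tau)$. The reduced operator contains only terms local in $t$: transport $D_{t,a}$, diffusion, the multiplication terms $\mathcal A(\bar E(t))$, $\mathcal N[\bar y](t)$, $\mathcal K(\bar u)$, the term $\mathcal N[z](t)\bar y(t)$, and the renewal with $\bar E(t)$. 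So the energy estimate of Lemma~\ref{lem:frozen-linear-wp} applies on $(0,\tau)$, with $\mathcal N[z]\bar y$ absorbed as in Lemma~\ref{lem:linearized-wp}. Gronwall then gives $z_f=0$ on $[0,\tau]$. Applying this to $f-P_\tau^{\rm c}f$ and using linearity yields $P_\tau^{\rm c}\mathcal G=P_\tau^{\rm c}\mathcal G P_\tau^{\rm c}$.

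\emph{Anti-causality of $\mathcal G^*$.} I will argue by duality in $L^2(Q)$, pairing with $L^2(0,T;V')$ where needed. The projections are self-adjoint and $P_\tau=I-P_\tau^{\rm c}$. Taking adjoints in the causality identity gives $\mathcal G^*P_\tau^{\rm c}=P_\tau^{\rm c}\mathcal G^*P_\tau^{\rm c}$, equivalently $P_\tau\mathcal G^*P_\tau^{\rm c}=0$. Then
\[
P_\tau\mathcal G^*=P_\tau\mathcal G^*(P_\tau+P_\tau^{\rm c})=P_\tau\mathcal G^*P_\tau .
\]
Alternatively, one can argue directly with the backward equation~\eqref{eq:formal-reduced-adjoint} via the time reversal used in Proposition~\ref{prop:reduced-adjoint-solvability}: source zero on $(\tau,T]$ and $p(T)=0$ force $p=0$ on $(\tau,T]$. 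I will cite the transposition definition $\int_0^T\langle f,p_{\rm red}\rangle\,dt=\langle q,z_f\rangle$ to justify that $\mathcal G^*$ is exactly this adjoint.

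\emph{Volterra property of $\mathcal T$.} Write $\mathcal T=\ell_{\bar y,\bar u}\circ\mathcal G^*\circ M_\chi$. Both $M_\chi$ and $\ell_{\bar y,\bar u}$ are pointwise in $t$. For $M_\chi$, $(M_\chi\theta)(t)=\theta(t)\chi$. For $\ell_{\bar y,\bar u}$, definition~\eqref{eq:ell-definition} involves only $p(t,\cdot,\cdot)$ and the trace $p(t,0,\cdot)$, with $c_{\bar y}(t)$ and $b_{\bar y}(t)$ evaluated at the same $t$. Hence both commute with $P_\tau$: $M_\chi P_\tau=P_\tau M_\chi$ and $\ell_{\bar y,\bar u}P_\tau=P_\tau\ell_{\bar y,\bar u}$. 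Therefore
\[
P_\tau\mathcal T=\ell\,P_\tau\mathcal G^*M_\chi=\ell\,P_\tau\mathcal G^*P_\tau M_\chi=P_\tau\ell\,\mathcal G^*M_\chi P_\tau=P_\tau\mathcal T P_\tau .
\]

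\emph{Main obstacle.} The delicate point is the commutation $\ell P_\tau=P_\tau\ell$ at the trace $p(t,0,\cdot)$. One must check that truncating $p$ in time commutes with taking the age-zero trace and stays within the space where $\ell$ is defined. On $\mathcal P$ the trace $t\mapsto p(t,0,\cdot)$ lies in $L^2(0,T;L^2(\Omega))$, by the trace bound in the energy estimate of Lemma~\ref{lem:frozen-linear-wp} transported to the backward problem. The trace is computed for each fixed $t$, so multiplying by $\mathbf 1_{(\tau,T]}(t)$ commutes with it a.e. in $t$. It therefore suffices to verify the identity applied to $\mathcal G^*(\cdot)$, which lies in $\mathcal P$, rather than to $P_\tau p$ itself.
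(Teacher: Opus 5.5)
Your proposal is correct and follows the paper's proof essentially step for step: causality of $\mathcal G$ from the time-local energy estimate and Gronwall on $[0,\tau]$, anti-causality of $\mathcal G^*$ by taking time-pairing adjoints with $(P_\tau^{\rm c})^*=P_\tau^{\rm c}$ and $P_\tau+P_\tau^{\rm c}=I$, and the backward Volterra identity from the time-locality of $M_\chi$ and $\ell_{\bar y,\bar u}$. Your extra care about commuting the truncation with the age-zero trace is a refinement the paper leaves implicit. One correction to how you justify it: an element $p(t)\in X$ has no age-zero trace for a fixed $t$. The commutation holds a.e. in $t$ because the trace is local: at $(t,0)$ it is the endpoint of the backward characteristic and so depends only on $p$ at nearby later times.
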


\begin{proof}
Causality: in $\mathscr A_{\rm red}z=f$ every operator except $\partial_t$ is
local in time, so the energy identity on $[0,\tau]$ shows $z_f|_{[0,\tau]}$
depends only on $f|_{[0,\tau]}$. Anti-causality follows by taking
time-pairing adjoints and using $(P_\tau^{\rm c})^*=P_\tau^{\rm c}$. Finally
$M_\chi$ and $\ell_{\bar y,\bar u}$ are local in time, so
$P_\tau\mathcal T=\ell_{\bar y,\bar u}P_\tau\mathcal G^*M_\chi
=\ell_{\bar y,\bar u}P_\tau\mathcal G^*P_\tau M_\chi=P_\tau\mathcal T P_\tau$.
\end{proof}

By \eqref{eq:T-anticausality}, if $\mathcal T$ is an integral operator its
kernel is backward-triangular. We adopt the kernel representation as a
hypothesis; Proposition~\ref{prop:kernel-interior} below records the rigorous
content available from the energy theory.

\begin{hypothesis}[Volterra kernel representation]
\label{hyp:volterra-kernel}
$\mathcal T$ admits the representation
\begin{equation}
\label{eq:T-kernel}
(\mathcal T\theta)(t)=\int_t^T\kappa(t,s)\,\theta(s)\,ds,
\qquad
\kappa\in L^2(\triangle_T),\quad
\triangle_T:=\{0\le t\le s\le T\},
\end{equation}
with $\kappa(t,s)=0$ for $s<t$.
\end{hypothesis}

\begin{proposition}[Kernel structure from the energy theory]
\label{prop:kernel-interior}
In regime (O1), $\mathcal T$ is a bounded operator on $L^2(0,T)$, and by
Lemma~\ref{lem:anticausality} it is backward-triangular: $P_\tau\mathcal T=
P_\tau\mathcal T P_\tau$. Write $\mathcal T=\mathcal T_{\rm int}+
\mathcal T_{\rm tr}$ according to the two terms of $\ell_{\bar y,\bar u}$ in
\eqref{eq:ell-definition}. Let $G(t,s)\in\mathcal L(X)$ denote the bounded
reduced-adjoint propagator, so that
$\mathcal G^*(\theta\chi)(t)=\int_t^T G(t,s)\,\theta(s)\chi\,ds$ with
$\sup_{t\le s}\|G(t,s)\|_{\mathcal L(X)}\le C$
\textnormal{(Proposition~\ref{prop:reduced-adjoint-solvability})}. Then the
interior part is an integral operator with kernel
\begin{equation}
\label{eq:kappa-int}
\kappa_{\rm int}(t,s)=-\big(c_{\bar y}(t),G(t,s)\chi\big)_X,
\qquad
|\kappa_{\rm int}(t,s)|\le C\|\chi\|_X\,\|c_{\bar y}(t)\|_X ,
\end{equation}
and since $c_{\bar y}\in L^2(0,T;X)$ \textnormal{(as
$\bar y\in C([0,T];X)$ and $\partial_E\mathcal A$ is bounded)},
$\kappa_{\rm int}\in L^2(\triangle_T)$ with no diagonal singular part. The
kernel has no Dirac mass on $\{t=s\}$ because the source enters
\eqref{eq:formal-reduced-adjoint} through $\partial_t$, so $G(t,s)\to I$ as
$t\uparrow s$ with bounded diagonal values. Hence
Hypothesis~\ref{hyp:volterra-kernel} reduces to the corresponding
$L^2(\triangle_T)$ statement for the renewal-trace part $\kappa_{\rm tr}$,
which holds whenever the propagator renewal trace
$s\mapsto(G(\cdot,s)\chi)(\cdot,0,\cdot)$ is square-integrable into
$L^2((0,T)\times\Omega)$; in regime (O1) this trace is controlled, by the
mechanism of Lemma~\ref{lem:renewal-trace-compactness}, in
$L^2(0,T;H^1(\Omega))$.
\end{proposition}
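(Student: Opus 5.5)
The argument splits into boundedness of $\mathcal T$, the interior kernel, and the reduction of Hypothesis~\ref{hyp:volterra-kernel} to the trace part.

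\emph{Boundedness.} For $\theta\in L^2(0,T)$ we have $\|M_\chi\theta\|_{L^2(Q)}=\|\theta\|_{L^2(0,T)}\|\chi\|_X$. By \eqref{eq:Gstar-bound}, $p_\theta:=\mathcal G^*(\theta\chi)\in\mathcal P$ with $\|p_\theta\|_{\mathcal P}\le C\|\chi\|_X\|\theta\|_{L^2}$. The interior part of $\ell_{\bar y,\bar u}$ is controlled pointwise in $t$ by Cauchy--Schwarz:
\[
\Bigl|\int_{\mathcal I_a}\int_\Omega c_{\bar y}\cdot p_\theta\Bigr|\le \|c_{\bar y}(t)\|_X\|p_\theta(t)\|_X ,
\]
with $\|c_{\bar y}(t)\|_X\le C_R\|\bar y\|_{C([0,T];X)}$. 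This term therefore lies in $L^\infty(0,T)\subset L^2(0,T)$, with norm $\lesssim\|\theta\|_{L^2}$. The trace term needs $p_\theta(\cdot,0,\cdot)\in L^2((0,T)\times\Omega)$. I obtain it from the backward energy identity, which is the time/age-reversed version of the energy estimate in Lemma~\ref{lem:frozen-linear-wp}. After the reversal, the age-$0$ boundary of the adjoint becomes the outflow boundary, so the retained outflow term $\tfrac12\|p(t,0,\cdot)\|^2$ controls the trace by $\|\theta\chi\|_{L^2(Q)}$. Pairing with $b_{\bar y}\in L^\infty(0,T;L^2(\Omega))$, which follows from Assumption~\ref{ass:renewal} and \eqref{eq:renewal-estimate}, gives an $L^2(0,T)$ bound. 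Backward-triangularity is then just Lemma~\ref{lem:anticausality}.

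\emph{Interior kernel.} I use the propagator representation $p_\theta(t)=\int_t^T G(t,s)\theta(s)\chi\,ds$. This is justified by the variation-of-constants (mild) formula for the reversed problem, whose existence and bounded propagator come from the mild construction in Lemma~\ref{lem:frozen-linear-wp}. Applying Fubini (allowed since everything is bounded in $X$) gives
\[
(\mathcal T_{\rm int}\theta)(t)=\int_t^T\kappa_{\rm int}(t,s)\theta(s)\,ds,
\qquad \kappa_{\rm int}(t,s)=-\bigl(c_{\bar y}(t),G(t,s)\chi\bigr)_X .
\]
The uniform bound $\sup\|G\|\le C$ yields \eqref{eq:kappa-int}. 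Integrating over $\triangle_T$ gives $\|\kappa_{\rm int}\|_{L^2}^2\le C^2\|\chi\|_X^2\,T\,\|c_{\bar y}\|_{L^2(0,T;X)}^2<\infty$. Since $\kappa_{\rm int}$ is a bounded function, $\mathcal T_{\rm int}$ carries no diagonal (identity-type) part. The reason is that the source enters the adjoint through a first-order-in-time equation, so the response is an integral in $s$ rather than an instantaneous multiple of $\theta(t)$. Strong continuity $G(t,s)\to I$ as $t\uparrow s$ confirms that the diagonal values are bounded.

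\emph{Trace part and reduction.} $\mathcal T_{\rm tr}$ has the formal kernel
\[
\kappa_{\rm tr}(t,s)=\int_\Omega b_{\bar y}(t,x)\cdot\bigl(G(t,s)\chi\bigr)(0,x)\,dx .
\]
Hypothesis~\ref{hyp:volterra-kernel} is therefore equivalent to $\kappa_{\rm tr}\in L^2(\triangle_T)$, because $\kappa=\kappa_{\rm int}+\kappa_{\rm tr}$ and the interior part is already in $L^2$. By Cauchy--Schwarz in $x$, $|\kappa_{\rm tr}(t,s)|\le\|b_{\bar y}(t)\|_{L^2(\Omega)}\,\|(G(t,s)\chi)(0,\cdot)\|_{L^2(\Omega)}$. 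Square-integrability of $s\mapsto(G(\cdot,s)\chi)(\cdot,0,\cdot)$ into $L^2((0,T)\times\Omega)$, together with $b_{\bar y}\in L^\infty_t L^2_x$, then gives $\kappa_{\rm tr}\in L^2(\triangle_T)$. The upgrade of this trace to $L^2(0,T;H^1(\Omega))$ comes by applying the renewal-trace mechanism of Lemma~\ref{lem:renewal-trace-compactness}, using the $W^{1,\infty}$ regularity of $\mathcal B$ in $x$, to the propagator applied to $\chi$.

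\emph{Main obstacle.} The hard step is making the trace kernel rigorous. The energy estimate only gives the trace of $p_\theta$ for $L^2$-in-time sources, not the trace of $G(t,s)\chi$ for each fixed $s$, because a Dirac source in $s$ is not in $L^2$. I would first try to treat $G(\cdot,s)\chi$ as the solution of the homogeneous backward problem on $[0,s]$ with terminal datum $\chi$ at time $s$. The outflow energy identity then bounds $\int_0^s\|(G(t,s)\chi)(0)\|^2dt\le C\|\chi\|_X^2$ uniformly in $s$, and integrating over $s$ gives the required square-integrability.
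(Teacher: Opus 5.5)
Your proposal is correct and follows the paper's proof: boundedness from $\|M_\chi\|=\|\chi\|_X$, \eqref{eq:Gstar-bound} and boundedness of $\ell_{\bar y,\bar u}$; the interior kernel from the propagator form and Cauchy--Schwarz, with the same bound $\int\!\!\int_{\triangle_T}|\kappa_{\rm int}|^2\le C\|\chi\|_X^2\,T\,\|c_{\bar y}\|_{L^2(0,T;X)}^2$; and no diagonal mass because the adjoint problem is first order in time. You also justify two things the paper only asserts: the $L^2((0,T)\times\Omega)$ bound on the adjoint's $a=0$ trace via the backward outflow energy term (needed because the norm of $\mathcal P$ does not contain that trace), and, through the terminal-datum estimate $\int_0^s\|(G(t,s)\chi)(0,\cdot)\|_{L^2(\Omega)}^2\,dt\le C\|\chi\|_X^2$ uniformly in $s$, the square-integrability that gives $\kappa_{\rm tr}\in L^2(\triangle_T)$ outright, which makes the $H^1$ upgrade via Lemma~\ref{lem:renewal-trace-compactness} unnecessary for the reduction (in both texts that upgrade is only gestured at, since the lemma concerns the forward age-averaged renewal integral rather than the adjoint's outflow trace).
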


\begin{proof}
Boundedness of $\mathcal T$ on $L^2(0,T)$ follows from
$\|M_\chi\|=\|\chi\|_X$, the bound \eqref{eq:Gstar-bound}, and the
boundedness of $\ell_{\bar y,\bar u}:\mathcal P\to L^2(0,T)$. The interior
representation and estimate \eqref{eq:kappa-int} are immediate from the
propagator form and Cauchy--Schwarz, and
$\int\!\!\int_{\triangle_T}|\kappa_{\rm int}|^2\le C\|\chi\|_X^2\,T\,
\|c_{\bar y}\|_{L^2(0,T;X)}^2<\infty$. The absence of a diagonal Dirac is the
statement that the evolution operator of a first-order-in-time problem is the
identity on the diagonal, not a singular measure.
\end{proof}

Proposition~\ref{prop:kernel-interior} makes the interior part of
Hypothesis~\ref{hyp:volterra-kernel} a theorem and isolates the only remaining
ingredient --- square-integrability of the propagator renewal trace --- which
is the natural backward analogue of Lemma~\ref{lem:renewal-trace-compactness}.
When the feedback does not act through the birth law ($b_{\bar y}\equiv0$, i.e.
$\partial_E \mathcal B\equiv0$), $\mathcal T=\mathcal T_{\rm int}$ and
Hypothesis~\ref{hyp:volterra-kernel} is fully established. We retain the
hypothesis form for the general case to avoid a lengthy propagator-trace
digression; the quasinilpotency conclusion below depends only on
\eqref{eq:T-kernel}, in line with the classical theory of Volterra operators
\cite{gohberg1970theory,yu2026fullcovariance}.

\begin{lemma}[Quasinilpotency, $L^2$ kernel]
\label{lem:quasinilpotent}
Under Hypothesis~\ref{hyp:volterra-kernel}, $\mathcal T$ is Hilbert--Schmidt
(hence compact \cite{brezis2011functional}) and quasinilpotent: $r(\mathcal T)=0$,
$\sigma(\mathcal T)=\{0\}$ \cite{gohberg1970theory}.
\end{lemma}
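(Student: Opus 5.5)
Under Hypothesis~\ref{hyp:volterra-kernel} we treat $\mathcal T$ as the integral operator with kernel $\kappa\in L^2(\triangle_T)$, extended by zero to $(0,T)^2$. Hilbert--Schmidt is then immediate: an integral operator on $L^2(0,T)$ with kernel in $L^2((0,T)^2)$ has $\|\mathcal T\|_{HS}=\|\kappa\|_{L^2}$, and Hilbert--Schmidt operators are compact \cite{brezis2011functional}. The substantive claim is quasinilpotency. Since $\mathcal T$ is compact, $\sigma(\mathcal T)\setminus\{0\}$ consists of eigenvalues, and $0\in\sigma(\mathcal T)$ because $L^2(0,T)$ is infinite-dimensional. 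So it suffices to exclude nonzero eigenvalues, or alternatively to show $\|\mathcal T^n\|^{1/n}\to0$ directly. I will give the iterate estimate as the main route and the eigenvalue argument as a cross-check.

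\emph{Iterate bound.} The standard factorial estimate needs a pointwise bound on $\kappa$, which we do not have; with only $L^2$ we weight by a row norm instead. Set
\[
k(t):=\Big(\int_t^T|\kappa(t,s)|^2ds\Big)^{1/2},\qquad
K(t):=\int_t^T k(s)^2\,ds .
\]
Here $k\in L^2(0,T)$ and $K$ is nonincreasing with $K(0)=\|\kappa\|_{L^2}^2$. The claim to prove by induction is
\[
|(\mathcal T^n\theta)(t)|\le k(t)\,\frac{K(t)^{(n-1)/2}}{\sqrt{(n-1)!}}\,\|\theta\|_{L^2},
\]
where the case $n=1$ is Cauchy--Schwarz. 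For the inductive step, Cauchy--Schwarz gives
\[
|(\mathcal T^{n+1}\theta)(t)|\le k(t)\Big(\int_t^T k(s)^2\frac{K(s)^{n-1}}{(n-1)!}\,ds\Big)^{1/2}\|\theta\|.
\]
Since $-K'=k^2$, the integral equals $K(t)^n/n!$. Squaring the bound and integrating in $t$ then yields
\[
\|\mathcal T^n\|\le \frac{\|\kappa\|_{L^2}^{\,n}}{\sqrt{n!}} .
\]
Hence $r(\mathcal T)=\lim\|\mathcal T^n\|^{1/n}=0$ by the Gelfand formula, so $\sigma(\mathcal T)=\{0\}$. The anti-causality of Lemma~\ref{lem:anticausality} enters only through the support condition $\kappa=0$ for $s<t$, which is what makes the iterated integrals run over ordered simplices.

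\emph{Alternative check.} Suppose $\mathcal T\theta=\lambda\theta$ with $\lambda\ne0$. For $\tau$ near $T$, the restriction $P_\tau\mathcal T P_\tau$ has Hilbert--Schmidt norm $\|\kappa\,\mathbf 1_{(\tau,T]^2}\|_{L^2}<|\lambda|$ by absolute continuity of the integral. By \eqref{eq:T-anticausality}, $P_\tau\theta$ is an eigenvector of $P_\tau\mathcal T P_\tau$ with eigenvalue $\lambda$, so $P_\tau\theta=0$. Covering $[0,T]$ by finitely many such strips and iterating backward in time, each step reusing anti-causality on the remaining interval, gives $\theta=0$.

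\emph{Main obstacle.} The only delicate point is the lack of an $L^\infty$ kernel bound, which blocks the textbook $(CT)^n/n!$ estimate. The weight $K(t)$ circumvents this, and the key fact to verify carefully is the identity $\int_t^T k^2K^{n-1}\,ds=K(t)^n/n$, which holds because $K$ is absolutely continuous. Everything else is routine.
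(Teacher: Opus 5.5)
Your proof is correct, but your main route differs from the paper's.

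\emph{The paper's route.} The paper does not estimate iterates pointwise. It uses the continuous mass function $F(\tau)=\iint_{\{0\le t\le s\le\tau\}}|\kappa|^2$ to cut $[0,T]$ into $n$ strips of equal $\kappa$-mass. It then writes $\mathcal T^N$ as a sum over nondecreasing block chains, since the blocks below the diagonal vanish by triangularity. Each chain contains at least $N-n+1$ diagonal blocks, each of norm $\le\nu/\sqrt n$, so that $\|\mathcal T^N\|\le n^2\binom{N+n}{n}\nu^N n^{-(N-n+1)/2}$. This gives $r(\mathcal T)\le\nu/\sqrt n$ for every $n$. That argument uses only the nest relation $P_\tau\mathcal T=P_\tau\mathcal T P_\tau$ and Hilbert--Schmidt smallness of the diagonal blocks, in the spirit of the abstract Gohberg--Krein theory. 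However, it yields $r(\mathcal T)=0$ with no usable rate.

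\emph{Your route.} Your weighted induction with $k(t)$ and $K(t)=\int_t^T k^2$ is shorter and avoids the chain combinatorics. It is also quantitative. The bound $\|\mathcal T^n\|\le\nu^n/\sqrt{n!}$ gives $\|(I-\mathcal T)^{-1}\|\le\sum_{n\ge0}\nu^n/\sqrt{n!}\le\sqrt2\,e^{\nu^2}$ (by Cauchy--Schwarz). So the resolvent bound in Theorem~\ref{thm:low-rank-feedback-alternative} would follow from the $L^2$ kernel hypothesis alone, without the extra $L^\infty$ assumption of Lemma~\ref{lem:quasinilpotent-Linf}.

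\emph{Your cross-check.} Your eigenvalue check combines strips with small diagonal Hilbert--Schmidt norm, anti-causality, and Riesz--Schauder. It is the part of your proposal closest to the paper's strip decomposition, replacing the chain count by compactness.
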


\begin{proof}
$\mathcal T$ is Hilbert--Schmidt with $\|\mathcal T\|\le\|\mathcal T\|_{\rm
HS}=\nu:=\|\kappa\|_{L^2(\triangle_T)}$; assume $\nu>0$. Fix $n\ge1$. Since
$F(\tau):=\iint_{\{0\le t\le s\le\tau\}}|\kappa|^2$ is continuous,
nondecreasing, $F(0)=0$, $F(T)=\nu^2$, choose
$\tau_j=\inf\{\tau:F(\tau)\ge(j/n)\nu^2\}$ so that
$F(\tau_j)-F(\tau_{j-1})=\nu^2/n$. Let $P_j$ be multiplication by
$\mathbf1_{(\tau_{j-1},\tau_j]}$ and $\mathcal T_{ij}:=P_i\mathcal T P_j$. By
triangularity $\mathcal T_{ij}=0$ for $i>j$, and for diagonal blocks
$\|\mathcal T_{jj}\|\le\|\mathcal T_{jj}\|_{\rm HS}\le(F(\tau_j)-
F(\tau_{j-1}))^{1/2}=\nu/\sqrt n$, while every block obeys
$\|\mathcal T_{ij}\|\le\nu$. Expanding $\mathcal T^N=\sum P_i\mathcal T^N P_j$
over nondecreasing chains $i=k_0\le\cdots\le k_N=j$, each chain has at least
$N-(n-1)$ diagonal factors, so its norm is
$\le\nu^N n^{-(N-n+1)/2}$; with at most $\binom{N+n}{n}$ chains and $n^2$ index
pairs,
\[
\|\mathcal T^N\|\le n^2\binom{N+n}{n}\nu^N n^{-(N-n+1)/2}.
\]
Taking $N$-th roots ($n$ fixed) gives $r(\mathcal T)\le\nu/\sqrt n$; letting
$n\to\infty$, $r(\mathcal T)=0$.
\end{proof}

As visualized in Figure~\ref{fig:backward_volterra}, the backward-triangular kernel support illustrates the causality collapse and the Fredholm alternative for the feedback-corrected adjoint. This geometric structure makes the quasinilpotency of the Hilbert--Schmidt operator $\mathcal{T}$ immediately intuitive, thereby establishing unconditional solvability.

\begin{figure}[htbp]
    \centering
    \includegraphics[width=\textwidth]{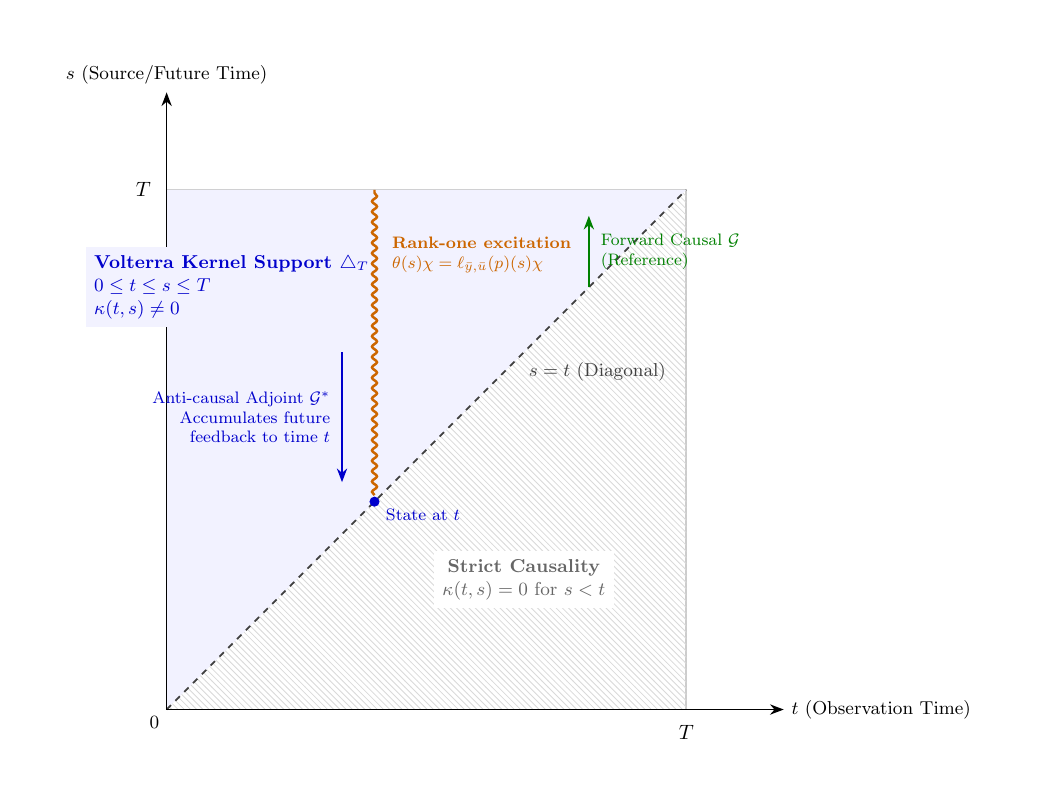} 
    \caption{The Backward Volterra adjoint and low-rank perturbation. The diagram displays the backward-triangular integration domain $\triangle_T = \{0 \le t \le s \le T\}$, contrasting the forward causal reduced operator $\mathcal{G}$ with the anti-causal adjoint $\mathcal{G}^*$. A localized rank-one excitation depicts the structural correction $\ell_{\bar{y},\bar{u}}(p)(s)\chi$, visually demonstrating why the strict triangular support eliminates diagonal singularities and guarantees the quasinilpotency of the feedback operator $\mathcal{T}$.}
    \label{fig:backward_volterra}
\end{figure}

\begin{lemma}[Quantitative bound, $L^\infty$ kernel]
\label{lem:quasinilpotent-Linf}
If in addition $\kappa\in L^\infty(\triangle_T)$ with
$\beta=\|\kappa\|_{L^\infty}$, then
$\|\mathcal T^n\|\le(\beta T)^n/n!$ and
$\|(I-\mathcal T)^{-1}\|\le e^{\beta T}$.
\end{lemma}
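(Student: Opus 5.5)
The plan is the standard iterated-kernel argument for Volterra operators, adapted to the backward orientation of Hypothesis~\ref{hyp:volterra-kernel}. Write $\mathcal T^n$ as an integral operator with kernel $\kappa_n$, defined recursively by $\kappa_1=\kappa$ and
\[
\kappa_{n+1}(t,s)=\int_t^s\kappa(t,r)\,\kappa_n(r,s)\,dr,\qquad 0\le t\le s\le T,
\]
with $\kappa_n(t,s)=0$ for $s<t$. This follows from Fubini and the triangular support of $\kappa$. Because $\kappa\in L^\infty(\triangle_T)$, every $\kappa_n$ is bounded, so Fubini applies to $\theta\in L^2(0,T)$ without difficulty.

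By induction I will show
\[
|\kappa_n(t,s)|\le\beta^n\frac{(s-t)^{n-1}}{(n-1)!}\quad\text{a.e. on }\triangle_T.
\]
The case $n=1$ is the definition of $\beta$. For the inductive step, substitute the bound into the recursion and compute
\[
\int_t^s\frac{(s-r)^{n-1}}{(n-1)!}\,dr=\frac{(s-t)^n}{n!}.
\]

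Next I convert the kernel bound into an operator bound. Schur's test uses the row and column integrals
\[
\sup_t\int_t^T|\kappa_n(t,s)|\,ds\le\frac{\beta^n T^n}{n!},\qquad \sup_s\int_0^s|\kappa_n(t,s)|\,dt\le\frac{\beta^n T^n}{n!}.
\]
It therefore gives $\|\mathcal T^n\|_{\mathcal L(L^2)}\le(\beta T)^n/n!$ directly, with no loss of a factor $\sqrt T$. As an alternative, the Hilbert--Schmidt bound also works: it gives $\beta^nT^n/((n-1)!\sqrt{n(2n-1)})$, which is at most $(\beta T)^n/n!$ because $n\le\sqrt{n(2n-1)}$. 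I will use Schur's test as the cleaner route.

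Finally, the Neumann series $\sum_{n\ge0}\mathcal T^n$ converges absolutely in $\mathcal L(L^2(0,T))$, with sum of norms at most $\sum_n(\beta T)^n/n!=e^{\beta T}$. It equals $(I-\mathcal T)^{-1}$, which gives the resolvent bound. This is consistent with Lemma~\ref{lem:quasinilpotent}, since $\|\mathcal T^n\|^{1/n}\to0$.

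The only step needing care is the first one, namely justifying that the iterated kernel is supported in $\triangle_T$ with integration range $[t,s]$. This comes from the support of $\kappa$ together with the anti-causality of Lemma~\ref{lem:anticausality}. Everything else is routine.
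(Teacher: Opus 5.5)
Your proposal is correct and follows the paper's proof essentially verbatim: the induction $|\kappa_n(t,s)|\le\beta^n(s-t)^{n-1}/(n-1)!$ on the iterated kernels, then Schur's test with both row and column sums bounded by $\beta^nT^n/n!$, then summation of the Neumann series to obtain $e^{\beta T}$. Two small remarks: in your optional Hilbert--Schmidt aside the exact denominator is $(n-1)!\sqrt{2n(2n-1)}$, which only strengthens your comparison, and the triangular support of $\kappa_n$ already follows from $\kappa(t,s)=0$ for $s<t$ in Hypothesis~\ref{hyp:volterra-kernel}, so you do not need Lemma~\ref{lem:anticausality}.
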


\begin{proof}
The iterated kernels satisfy $|\kappa_n(t,s)|\le\beta^n(s-t)^{n-1}/(n-1)!$ by
induction, so both Schur sums are $\le\beta^nT^n/n!$ and the Schur test gives
$\|\mathcal T^n\|\le(\beta T)^n/n!$; summing yields the resolvent bound.
\end{proof}

\begin{theorem}[Volterra-kernel low-rank feedback alternative]
\label{thm:low-rank-feedback-alternative}
Assume Proposition~\ref{prop:reduced-adjoint-solvability} (or its (O2)/(O3)
analogue), that $p\mapsto\ell_{\bar y,\bar u}(p)$ is bounded
$\mathcal P\to L^2(0,T)$, and Hypothesis~\ref{hyp:volterra-kernel}. Then
$I-\mathcal T$ is an isomorphism of $L^2(0,T)$, with
$(I-\mathcal T)^{-1}=\sum_{n\ge0}\mathcal T^n$, and
\eqref{eq:feedback-corrected-adjoint-abstract} has the unique solution
\begin{equation}
\label{eq:operator-valued-feedback-formula}
p=p_{\rm red}
+\mathcal G^*\!\left(\big[(I-\mathcal T)^{-1}\ell_{\bar y,\bar u}(p_{\rm red})\big](t)\chi\right).
\end{equation}
Under Lemma~\ref{lem:quasinilpotent-Linf},
$\|p-p_{\rm red}\|_{\mathcal P}\le C\,\|\ell_{\bar y,\bar u}\|\,\|\chi\|\,
e^{\beta T}\,\|\ell_{\bar y,\bar u}(p_{\rm red})\|_{L^2(0,T)}$.
\end{theorem}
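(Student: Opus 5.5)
The plan is to treat \eqref{eq:feedback-corrected-adjoint-abstract} as a fixed-point problem for the scalar feedback coefficient $\theta=\ell_{\bar y,\bar u}(p)\in L^2(0,T)$. We first settle invertibility of $I-\mathcal T$, then prove that solutions of the adjoint equation correspond bijectively to solutions of \eqref{eq:I-minus-T}, and finally derive the quantitative bound.

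\emph{Step 1 (invertibility).} Under Hypothesis~\ref{hyp:volterra-kernel}, Lemma~\ref{lem:quasinilpotent} gives $r(\mathcal T)=0$. Gelfand's formula then yields $\|\mathcal T^n\|^{1/n}\to0$, so the Neumann series $\sum_{n\ge0}\mathcal T^n$ converges absolutely in $\mathcal L(L^2(0,T))$. Its sum is a two-sided inverse of $I-\mathcal T$, by the usual telescoping $(I-\mathcal T)\sum_{n\le N}\mathcal T^n=I-\mathcal T^{N+1}\to I$. Hence $I-\mathcal T$ is an isomorphism with the stated series inverse. Note that $\mathcal T$ is already bounded on $L^2(0,T)$ as the composition $\ell_{\bar y,\bar u}\circ\mathcal G^*\circ M_\chi$. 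Here $M_\chi:L^2(0,T)\to L^2(Q)$ has norm $\|\chi\|_X$, $\mathcal G^*$ is bounded by \eqref{eq:Gstar-bound}, and $\ell_{\bar y,\bar u}$ is bounded by assumption.

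\emph{Step 2 (equivalence and uniqueness).} Take any solution $p\in\mathcal P$ of \eqref{eq:feedback-corrected-adjoint-abstract}. Since $\mathcal G^*$ is the solution operator of the reduced backward problem, linearity gives \eqref{eq:p-corrected-preliminary}. Applying $\ell_{\bar y,\bar u}$ shows that $\theta=\ell_{\bar y,\bar u}(p)$ solves \eqref{eq:I-minus-T}, so $\theta=(I-\mathcal T)^{-1}\ell_{\bar y,\bar u}(p_{\rm red})$. Substituting back into \eqref{eq:p-corrected-preliminary} yields \eqref{eq:operator-valued-feedback-formula}, which proves uniqueness.

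For existence, define $\theta$ by the resolvent formula and set $p:=p_{\rm red}+\mathcal G^*(\theta\chi)$. Then
\[
\ell_{\bar y,\bar u}(p)=\ell_{\bar y,\bar u}(p_{\rm red})+\mathcal T\theta=\theta,
\]
so $p=p_{\rm red}+\mathcal G^*(\ell_{\bar y,\bar u}(p)\chi)$. Applying $\mathscr A_{\rm red}^*$ then shows that $p$ solves \eqref{eq:feedback-corrected-adjoint-abstract}.

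The delicate point, and the step I expect to be the main obstacle, is making ``applying $\mathscr A_{\rm red}^*$'' rigorous, since the adjoint is defined by transposition. I would argue in the transposition sense. For every reduced forward solution $z_f$ we have $\int_0^T\langle f,\mathcal G^*g\rangle\,dt=\langle g,z_f\rangle$ with $g=q+\theta\chi\in L^2(Q)$. The term $\theta\chi$ lies in $L^2(Q)$ because $\theta\in L^2(0,T)$ and $\chi\in X$. So $p$ is exactly the transposition solution with source $q+\ell_{\bar y,\bar u}(p)\chi$, and existence and uniqueness in $\mathcal P$ follow from Proposition~\ref{prop:reduced-adjoint-solvability}.

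\emph{Step 3 (bound).} Under Lemma~\ref{lem:quasinilpotent-Linf} we have $\|(I-\mathcal T)^{-1}\|\le e^{\beta T}$. Then
\[
\|p-p_{\rm red}\|_{\mathcal P}\le\|\mathcal G^*\|\,\|M_\chi\|\,\|\theta\|_{L^2(0,T)}\le C\|\chi\|_X\,e^{\beta T}\|\ell_{\bar y,\bar u}(p_{\rm red})\|_{L^2(0,T)}.
\]
The stated form carries an extra factor $\|\ell_{\bar y,\bar u}\|$, which can be absorbed by enlarging $C$, or kept explicit if one prefers to write $\|\ell_{\bar y,\bar u}(p_{\rm red})\|\le\|\ell_{\bar y,\bar u}\|\,\|p_{\rm red}\|_{\mathcal P}$.
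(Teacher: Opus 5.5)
Your proposal is correct and follows essentially the paper's own argument. Lemma~\ref{lem:quasinilpotent} gives the Neumann-series inverse, and any solution reduces to $(I-\mathcal T)\theta=\ell_{\bar y,\bar u}(p_{\rm red})$ through \eqref{eq:p-corrected-preliminary}. The candidate $p$ is checked via $\ell_{\bar y,\bar u}(p)=\ell_{\bar y,\bar u}(p_{\rm red})+\mathcal T\theta=\theta$, and the estimate comes from $\|(I-\mathcal T)^{-1}\|\le e^{\beta T}$. Your transposition-sense verification simply makes explicit what the paper compresses into ``conversely this $p$ solves''.

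Your remark on the bound is also right: the natural estimate carries no factor $\|\ell_{\bar y,\bar u}\|$. The stated right-hand side is quadratic in $\ell_{\bar y,\bar u}$, while $p-p_{\rm red}$ is only linear in it to leading order. So the stated form holds only with $C$ allowed to depend on $\|\ell_{\bar y,\bar u}\|$, a point the paper's one-line appeal to Lemma~\ref{lem:quasinilpotent-Linf} does not address.
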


\begin{proof}
By Lemma~\ref{lem:quasinilpotent}, $r(\mathcal T)=0$, so $I-\mathcal T$ is
invertible and the Neumann series converges. From
\eqref{eq:p-corrected-preliminary} and \eqref{eq:I-minus-T},
$\theta=(I-\mathcal T)^{-1}\ell_{\bar y,\bar u}(p_{\rm red})$, giving
\eqref{eq:operator-valued-feedback-formula}; conversely this $p$ solves
\eqref{eq:feedback-corrected-adjoint-abstract}. Uniqueness follows since
$(I-\mathcal T)(\theta_1-\theta_2)=0$ forces $\theta_1=\theta_2$. The
quantitative bound is Lemma~\ref{lem:quasinilpotent-Linf}.
\end{proof}

\begin{remark}[Resonance is a collapse of causality]
\label{rem:resonance-collapse}
In the time-dependent model $\mathcal T$ is strictly backward-triangular and
carries no diagonal Dirac mass, so $\sigma(\mathcal T)=\{0\}$. The
stationary/algebraic reduction collapses the time direction, $\mathcal T$
degenerates to scalar multiplication by $\ell(\psi)$
(kernel $\rightsquigarrow\ell(\psi)\delta(t-s)$), and the spectrum moves to
$\{\ell(\psi)\}$; only then can the denominator $1-\ell(\psi)$ vanish.
\end{remark}

We record the scalar Sherman--Morrison reduction.
If the feedback is scalar, $\ell:\mathcal P\to\R$, then with
$\psi:=\mathcal G^*\chi$ and $1-\ell(\psi)\neq0$,
$\displaystyle p=p_{\rm red}+\frac{\ell(p_{\rm red})}{1-\ell(\psi)}\psi$ .
This is a reduction of \eqref{eq:operator-valued-feedback-formula}, valid only
when the loop is genuinely scalar; in the full model $I-\mathcal T$ is always
invertible (Theorem~\ref{thm:low-rank-feedback-alternative}).

\subsection{Adjoint identity and the reduced gradient}
\label{subsec:adjoint-gradient-identity}
For an admissible $h$, $z=S'(\bar u)h$, and $p$ solving
\eqref{eq:feedback-corrected-adjoint-abstract}, the feedback terms cancel and
\begin{equation}
\label{eq:adjoint-gradient-identity}
\langle q_{\rm run}+q_{\rm obs},z\rangle
=
-
\int_Q\big(\mathcal K(h)\bar y\big)\cdot p\,dx\,da\,dt
=
-\sum_{\ell=1}^m\int_Q h_\ell\,(K_\ell\bar y)\cdot p .
\end{equation}
Hence the state part of the reduced gradient is represented by the density
\begin{equation}
\label{eq:state-gradient-representation}
\mathcal K^*(\bar y,p)
=
-\left(
(K_1\bar y)\cdot p,\ldots,(K_m\bar y)\cdot p
\right)^\top ,
\end{equation}
consistent with the duality
$\int_Q\mathcal K^*(\bar y,p)\cdot h=-\int_Q(\mathcal K(h)\bar y)\cdot p$. In
regime (O1), $\bar y,p\in L^2(0,T;V)\cap C([0,T];X)$, so
$(K_\ell\bar y)\cdot p\in L^1(Q)$ by the Cauchy--Schwarz inequality; thus
$\mathcal K^*(\bar y,p)\in L^1(Q;\R^m)$ in general (and in $L^2(Q)$ under the
regularity of Proposition~\ref{prop:Kstar-L2}).

\section{Optimality system, switching-function perturbation, and examples}
\label{sec:optimality-switching-examples}

\subsection{The reduced problem and a transport--diffusion compactness theorem}
\label{subsec:reduced-ocp}
For $u\in U_{\rm ad}$ write $y_u=S(u)$, $E_u=E_{y_u}$. The objective is
$\displaystyle J(y,u)
:=
J_{\rm obs}(y)
+
\frac{\gamma}{2}
\int_Q|C_Iy-y_d|^2
+
\frac{\alpha}{2}
\int_Q|u|^2 $,
with $J_{\rm obs}$ the default time-averaged surveillance (O1), and
$j(u):=J(S(u),u)$. The problem is $\min_{u\in U_{\rm ad}}j(u)$.

The compactness needed for the direct method is proved, not assumed. We first
record the renewal-trace compactness obtained by age-averaging.

\begin{lemma}[Outflow trace bound]
\label{lem:outflow-trace}
Every weak solution \(y\) obtained in Theorem~\ref{thm:closed-loop-state-wp} satisfies
$\displaystyle y(\cdot, a_{\max}, \cdot) \in L^2(0,T; L^2(\Omega;\R^n))$.
\end{lemma}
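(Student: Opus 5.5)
The plan is to integrate the energy identity of Lemma~\ref{lem:frozen-linear-wp} in time while keeping the outflow term instead of discarding it. We apply this at the closed-loop fixed point of Theorem~\ref{thm:closed-loop-state-wp}. There $y$ solves the frozen problem \eqref{eq:frozen-linear-equation} with $E=E_y$, $w=y$, $G=F$, so the frozen theory applies with $\|E_y\|_C+\|y\|_C\le R$ controlled by \eqref{eq:closed-loop-apriori}. The starting point is the transport identity
\[
\int_{\mathcal I_a}\partial_a y\cdot y\,da=\tfrac12|y(t,a_{\max})|^2-\tfrac12|y(t,0)|^2 .
\]
This produces the differential inequality displayed in the proof of Lemma~\ref{lem:frozen-linear-wp}, with the term $\tfrac12\|y(t,a_{\max},\cdot)\|^2_{L^2(\Omega)}$ on the left.

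We integrate that inequality over $(0,T)$, which gives
\[
\tfrac12\|y(T)\|_X^2+\tfrac12\int_0^T\|y(t,a_{\max})\|^2_{L^2(\Omega)}dt+\sigma_0\|\nabla_xy\|^2_{L^2(Q)}
\le \tfrac12\|y^0\|_X^2+\tfrac12\int_0^T\|y(t,0)\|^2dt+c_0\|y\|^2_{L^2(0,T;X)}+\int_0^T\|F\|_{V'}\|y\|_V .
\]
We bound the terms on the right one at a time. The inflow term is controlled by the renewal estimate \eqref{eq:renewal-estimate}, which gives $\le C_R^2a_{\max}T\|y\|^2_{C([0,T];X)}$. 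The source term is handled by Young's inequality, absorbing $\|\nabla_x y\|^2$ into the left side and using the $L^2(0,T;V)$ bound. Everything on the right is then bounded by \eqref{eq:closed-loop-apriori}. This yields
\[
\|y(\cdot,a_{\max},\cdot)\|_{L^2(0,T;L^2(\Omega))}\le C\bigl(\|y^0\|_X+\|F\|_{L^2(0,T;V')}\bigr).
\]

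The main obstacle is justifying the formal identity. We only know $D_{t,a}y\in L^2(0,T;V')$, not $\partial_a y$ separately, so the traces at $a=0$ and $a=a_{\max}$ need a meaning. We would use the characteristic mild representation from the proof of Lemma~\ref{lem:frozen-linear-wp} to supply it. Along each characteristic $s=t-a$, $Y(s,\cdot)\in C(\text{age interval};X_\Omega)$ is given by the variation-of-constants formula, so $y(t,a_{\max},\cdot)=Y(t-a_{\max},a_{\max},\cdot)$ is defined pointwise. For this $Y$ the parabolic energy identity holds rigorously in $a$ by standard Lions--Magenes in the variable $a$, because $\partial_aY\in L^2(V')$ along each characteristic.

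We then integrate this per-characteristic identity over $s$. The change of variables $(s,a)\mapsto(t,a)$ has Jacobian $1$, and integrating reproduces the integrated identity above. Characteristics that leave the time window at $t=T$ before reaching $a_{\max}$ contribute $\|y(T)\|_X^2$ terms instead, and these have the favourable sign. Alternatively, one can regularize by mollifying in $(t,a)$ along the direction $(1,1)$, which commutes with $D_{t,a}$, and pass to the limit using Fatou's lemma. Either route gives the stated trace bound.
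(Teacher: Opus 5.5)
Your proposal is correct and follows the paper's own argument: integrate the transport--diffusion energy identity in time while keeping the outflow term, bound the inflow term through the renewal estimate \eqref{eq:renewal-estimate}, and control the remaining right-hand side by the a priori bounds (you cite \eqref{eq:closed-loop-apriori} where the paper re-runs Gr\"onwall). The only real addition is your justification of the traces and of the energy identity along a.e.\ characteristic via Lions--Magenes in the age variable, which the paper leaves at the formal level.
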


\begin{proof}
The bound follows from standard energy estimates for the transport--diffusion equation. Formally, taking the inner product of the state equation with \(y\), integrating over \((0,t) \times \mathcal I_a \times \Omega\), and applying integration by parts, we obtain
\begin{align*}
&\frac{1}{2} \|y(t)\|_{L^2(\mathcal I_a\times\Omega)}^2 + \frac{1}{2} \int_0^t \|y(s, a_{\max}, \cdot)\|_{L^2(\Omega)}^2 \,ds 
+ \int_0^t \int_{\mathcal I_a} \int_\Omega \Sigma \nabla_x y \cdot \nabla_x y \,dx\,da\,ds \\
&\quad = \frac{1}{2} \|y_0\|_{L^2(\mathcal I_a\times\Omega)}^2 + \frac{1}{2} \int_0^t \|y(s, 0, \cdot)\|_{L^2(\Omega)}^2 \,ds 
- \int_0^t \int_{\mathcal I_a} \int_\Omega C_k y^2 \,dx\,da\,ds \\
&\qquad + \int_0^t \int_{\mathcal I_a} \int_\Omega F y \,dx\,da\,ds.
\end{align*}
By the renewal boundary condition, the inflow trace \(\|y(s, 0, \cdot)\|_{L^2(\Omega)}^2\) is bounded by \(C \|y(s)\|_{L^2(\mathcal I_a\times\Omega)}^2\). Using the uniform ellipticity of \(\Sigma\) and applying Gr\"onwall's inequality yields the uniform boundedness of \(y\) in \(L^\infty(0,T; L^2(\mathcal I_a\times\Omega))\). Consequently, the left-hand side term \(\frac{1}{2} \int_0^T \|y(s, a_{\max}, \cdot)\|_{L^2(\Omega)}^2 \,ds\) is bounded, which implies \(y(\cdot, a_{\max}, \cdot) \in L^2(0,T; L^2(\Omega;\R^n))\).
\end{proof}

\begin{lemma}[$H^1$-regularity of the feedback variable]
\label{lem:Ek-H1}
Under Assumption~\ref{ass:compactness-data}, if $\{u_k\}\subset U_{\rm ad}$,
$y_k=S(u_k)$, $E_k=E_{y_k}$, then $\{E_k\}$ is bounded in $H^1(0,T)$, hence
relatively compact in $C([0,T])$.
\end{lemma}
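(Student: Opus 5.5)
The plan is to compute $E_k'$ directly by testing the weak formulation with the time-independent function $\chi$, and to bound each resulting term uniformly in $k$ using the closed-loop a priori estimate \eqref{eq:closed-loop-apriori}. The $L^\infty$ part of the $H^1$ bound is immediate: $|E_k(t)|\le\|\chi\|_X\|y_k(t)\|_X$, and by Theorem~\ref{thm:closed-loop-state-wp} the constant in \eqref{eq:closed-loop-apriori} does not depend on $u\in U_{\rm ad}$, so $\sup_k\|E_k\|_{C([0,T])}\le C$. It therefore remains to bound $\{E_k'\}$ in $L^2(0,T)$. Once that is done, compactness of $H^1(0,T)\hookrightarrow C([0,T])$ (Arzel\`a--Ascoli with the uniform $\tfrac12$-H\"older modulus) gives the second claim.

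For the derivative, I would assume $\chi\in L^2(\mathcal I_a;H^1(\Omega))$ with $\partial_a\chi\in X$, and I would either take $\chi(a_{\max},\cdot)=0$ or keep the outflow trace. This is the regularity I would read into Assumption~\ref{ass:compactness-data} for $\chi$. Since $D_{t,a}y_k=\partial_ty_k+\partial_ay_k$, I split $\partial_t y_k=D_{t,a}y_k-\partial_a y_k$ and integrate by parts in $a$. In the sense of distributions on $(0,T)$ this gives
\[
E_k'(t)=\langle D_{t,a}y_k(t),\chi\rangle_{V',V}+\int_{\mathcal I_a}\!\int_\Omega y_k\cdot\partial_a\chi
+\int_\Omega y_k(t,0,\cdot)\cdot\chi(0,\cdot)-\int_\Omega y_k(t,a_{\max},\cdot)\cdot\chi(a_{\max},\cdot).
\]
I would justify this identity by mollifying in $t$ along characteristics, or equivalently by using the mild representation of Lemma~\ref{lem:frozen-linear-wp}.

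Each term is then bounded in $L^2(0,T)$ uniformly in $k$.
\begin{itemize}
\item The first term is bounded by $\|D_{t,a}y_k\|_{L^2(0,T;V')}\|\chi\|_V$, which is controlled by $\|y_k\|_{\mathcal Y}$.
\item The second term is bounded by $\|y_k\|_{C([0,T];X)}\|\partial_a\chi\|_X$.
\item The inflow trace is controlled through the renewal law and \eqref{eq:renewal-estimate}: since $\|E_k\|_C\le C$, we have $\|y_k(t,0)\|_{L^2(\Omega)}\le C\|y_k(t)\|_X$.
\item The outflow trace is bounded in $L^2((0,T)\times\Omega)$ by Lemma~\ref{lem:outflow-trace}, and its bound is again uniform because it comes from the same $u$-independent Gronwall estimate.
\end{itemize}
Summing these bounds gives $\|E_k'\|_{L^2(0,T)}\le C(\|y^0\|_X+\|F\|_{L^2(0,T;V')})$.

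I expect the main obstacle to be the rigorous justification of the splitting of $D_{t,a}$. The trace in age is not a priori meaningful in $V'$, and $\partial_t y_k$ alone is not controlled, as Remark~\ref{rem:time-continuity} stresses. My first approach would be to test the weak formulation with $\varphi(t,a,x)=\psi(t)\chi(a,x)$ for $\psi\in C_c^\infty(0,T)$. I would then rewrite $\int\psi\langle D_{t,a}y,\chi\rangle$ using the transport integration-by-parts formula on $Q$, which produces exactly the boundary traces at $a=0$ and $a=a_{\max}$ that the energy estimate already controls. This identifies $E_k'$ weakly and avoids differentiating $y_k$ in $t$ separately.

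The $W^{1,\infty}$ bound on $\mathcal B$ in Assumption~\ref{ass:compactness-data} is not strictly needed for this argument. It becomes relevant only if one wants the $H^1(\Omega)$ trace refinement used later in Lemma~\ref{lem:renewal-trace-compactness}.
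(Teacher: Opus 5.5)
Your proposal is correct and follows the paper's route. Like the paper, you split $\partial_t y_k=D_{t,a}y_k-\partial_a y_k$, integrate by parts in $a$, and bound the four resulting terms; the only difference is that the paper reads the inflow trace directly off the $\mathcal Y$-norm, whereas you re-derive it from the renewal law, and it uses Lemma~\ref{lem:outflow-trace} for the outflow trace as you do. You are also right that the needed regularity of $\chi$ ($\chi\in V$, $\partial_a\chi\in X$) is not part of Assumption~\ref{ass:compactness-data}; the paper invokes it as an unstated hypothesis ``(H-$\chi$)''. Your weak-in-time identification via test functions $\psi(t)\chi(a,x)$ supplies a justification of the splitting that the paper omits.
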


\begin{proof}
$E_k'(t)=(\chi,\partial_t y_k)_X=\langle D_{t,a}y_k,\chi\rangle_{V',V}
-\int_\Omega\int_{\mathcal I_a}\partial_a y_k\,\chi$. The first term is bounded in
$L^2(0,T)$ by $\|D_{t,a}y_k\|_{L^2(V')}\|\chi\|_V$. Integrating by parts in
$a$, $\int_{\mathcal I_a}\partial_a y_k\,\chi=[y_k\chi]_{a=0}^{a_{\max}}
-\int_{\mathcal I_a} y_k\partial_a\chi$, where $\partial_a\chi\in X$ (by
(H-$\chi$)) bounds the interior term in $L^\infty(0,T)$. For the boundary
terms, we use $\chi(0,\cdot),\chi(a_{\max},\cdot)\in L^2(\Omega)$ together with the
traces of $y_k$. The inflow trace at $a=0$ is bounded in $L^2((0,T)\times\Omega)$ 
by the definition of the solution space $\mathcal Y$, and the outflow trace at 
$a=a_{\max}$ is bounded in $L^2((0,T)\times\Omega)$ by virtue of Lemma~\ref{lem:outflow-trace}. 
Hence $E_k'\in L^2(0,T)$ uniformly.
\end{proof}

\begin{lemma}[Renewal-trace compactness]
\label{lem:renewal-trace-compactness}
Let \(\{y_k\}\) be a sequence of states uniformly bounded in \(L^2(0,T; L^2(\mathcal I_a; H^1(\Omega)))\) and let \(\rho_k(t,x) = \int_{\mathcal I_a} \mathcal B(E_k(t), \alpha, x) y_k(t, \alpha, x) \, d\alpha\). Then, under Assumption~\ref{ass:compactness-data}, the sequence \(\{\rho_k\}\) is relatively compact in \(L^2(0,T; L^2(\Omega))\).
\end{lemma}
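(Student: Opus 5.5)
The plan is to apply the Aubin--Lions--Simon compactness criterion in the triple $H^1(\Omega)\hookrightarrow\hookrightarrow L^2(\Omega)\hookrightarrow (H^1(\Omega))'$. This requires two ingredients: a uniform bound for $\{\rho_k\}$ in $L^2(0,T;H^1(\Omega))$, and a uniform modulus of time translation in a weaker norm, namely
\[
\sup_k\|\rho_k(\cdot+h)-\rho_k\|_{L^2(0,T-h;(H^1(\Omega))')}\to0 \quad\text{as } h\downarrow 0 .
\]
I will use the standing bounds $\|y_k\|_{\mathcal Y}\le C$ from Theorem~\ref{thm:closed-loop-state-wp}, the outflow trace bound of Lemma~\ref{lem:outflow-trace}, and the $C([0,T])$-relative compactness of $\{E_k\}$ from Lemma~\ref{lem:Ek-H1}.

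\emph{Spatial regularity.} Differentiate under the integral:
\[
\nabla_x\rho_k=\int_{\mathcal I_a}\big(\nabla_x\mathcal B\,y_k+\mathcal B\,\nabla_x y_k\big)\,d\alpha .
\]
Assumption~\ref{ass:compactness-data} bounds $\mathcal B$ and $\nabla_x\mathcal B$ in $L^\infty$ uniformly in $(E,\alpha)$. Cauchy--Schwarz in $\alpha$ then gives
\[
\|\rho_k(t)\|_{H^1(\Omega)}\le C_B\,a_{\max}^{1/2}\,\|y_k(t)\|_{L^2(\mathcal I_a;H^1(\Omega))},
\]
so $\{\rho_k\}$ is bounded in $L^2(0,T;H^1(\Omega))$. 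This step is routine.

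\emph{Time translates via characteristics.} This is the age-averaging mechanism. I split
\[
\rho_k(t+h)-\rho_k(t)=\int_{\mathcal I_a}\big[\mathcal B(E_k(t+h))-\mathcal B(E_k(t))\big]y_k(t+h)\,d\alpha+\int_{\mathcal I_a}\mathcal B(E_k(t))\big[y_k(t+h)-y_k(t)\big]\,d\alpha .
\]
The first term is at most $C\,\omega(h)\,\|y_k\|_{C([0,T];X)}$. Here $\omega$ is a common modulus of continuity of $\{E_k\}$, which exists by Lemma~\ref{lem:Ek-H1} together with the $C^1$ dependence of $\mathcal B$ on $E$ (Assumption~\ref{ass:renewal}).

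For the second term I follow characteristics. Integrating $D_{t,a}y_k$ along $s\mapsto(t+s,\alpha-h+s)$ for $s\in[0,h]$ gives, for $\alpha>h$,
\[
y_k(t+h,\alpha)=y_k(t,\alpha-h)+\int_0^h (D_{t,a}y_k)(t+s,\alpha-h+s)\,ds .
\]
The strip $\alpha<h$ is fed by the renewal trace, which is bounded in $L^2((0,T)\times\Omega)$ by the definition of $\mathcal Y$, so its contribution is $O(h^{1/2})$. The integral of $D_{t,a}y_k$, tested against $\varphi\in H^1(\Omega)$, is $O(h^{1/2})$ in $(H^1)'$ after integration in $t$, using $\|D_{t,a}y_k\|_{L^2(0,T;V')}\le C$. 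What remains is an age-shift term
\[
\int_{\mathcal I_a}\big[\mathcal B(E_k(t),\alpha+h)-\mathcal B(E_k(t),\alpha)\big]\,y_k(t,\alpha)\,d\alpha ,
\]
together with an $O(h^{1/2})$ endpoint correction near $a_{\max}$, which is controlled by Lemma~\ref{lem:outflow-trace}.

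\emph{Main obstacle: the age-shift term.} I expect this to be the hard step, because Assumption~\ref{ass:compactness-data} gives no regularity of $\mathcal B$ in $\alpha$. The first approach I will try is to mollify in $\alpha$: set $\mathcal B_\varepsilon=\mathcal B*_\alpha\eta_\varepsilon$. Since $\partial_\alpha\mathcal B_\varepsilon$ is bounded by $C_B/\varepsilon$, the shift term for $\mathcal B_\varepsilon$ is at most $C h/\varepsilon$. The error $\int_{\mathcal I_a}(\mathcal B-\mathcal B_\varepsilon)\,y_k\,d\alpha$ is at most
\[
\sup_{E,x}\|\mathcal B-\mathcal B_\varepsilon\|_{L^2_\alpha}\;\|y_k(t)\|_X .
\]
This requires the family $\{\mathcal B(E,\cdot,x)\}$ to be equicontinuous under translation in $L^2(\mathcal I_a)$, uniformly in $(E,x)$ on bounded $E$-sets. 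I will verify this from the $C^1$-in-$E$ and $W^{1,\infty}$-in-$x$ structure by a compactness argument over $(E,x)$; if that fails, I will add it explicitly as a mild hypothesis. Choosing $\varepsilon=\varepsilon(h)\to0$ with $h/\varepsilon\to0$ then yields a uniform translation modulus in $(H^1)'$.

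Combined with the $L^2(0,T;H^1(\Omega))$ bound, Simon's theorem gives relative compactness of $\{\rho_k\}$ in $L^2(0,T;L^2(\Omega))$.
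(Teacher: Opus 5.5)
Your argument is correct, and it differs usefully from the paper's proof. The paper also invokes Aubin--Lions--Simon, but it bounds $\partial_t\rho_k$ directly in $L^2(0,T;H^{-1}(\Omega))$. It differentiates under the integral and substitutes the state equation for $\partial_t y_k$. It treats the diffusion term by integrating by parts in $x$ against $\mathcal B\phi$, using the $W^{1,\infty}$-regularity of $\mathcal B$ in $x$. It handles $\int\mathcal B\,\partial_a y_k\,d\alpha$ by integrating by parts in $a$, with the inflow trace from $\mathcal Y$ and the outflow trace of Lemma~\ref{lem:outflow-trace}. That last step produces $\int_{\mathcal I_a}\partial_\alpha\mathcal B\,y_k\,d\alpha$. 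So the paper's route tacitly needs $\alpha$-differentiability of $\mathcal B$, which Assumption~\ref{ass:compactness-data} does not supply.

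Your characteristic/translate version uses only $D_{t,a}y_k\in L^2(0,T;V')$. It isolates exactly the $\alpha$-regularity that is needed:
$\omega_{\mathcal B}(h):=\sup_{|E|\le R,\,x}\|\mathcal B(E,\cdot+h,x)-\mathcal B(E,\cdot,x)\|_{L^2(0,a_{\max}-h)}\to0$, where $R\ge\sup_k\|E_k\|_{C([0,T])}$. Your compactness idea does establish this from the stated hypotheses, so no extra hypothesis is needed. The bound on $\partial_E\mathcal B$ for $|E|\le R$ gives Lipschitz dependence on $E$. The uniform $W^{1,\infty}(\Omega)$ bound is a uniform Lipschitz bound in $x$ on a bounded Lipschitz domain, independent of $(E,\alpha)$. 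Together they make $(E,x)\mapsto\mathcal B(E,\cdot,x)$ Lipschitz from $[-R,R]\times\overline\Omega$ into $L^2(\mathcal I_a)$. Its range is therefore compact, and a finite $\varepsilon$-net gives uniformly small translates.

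Two simplifications follow. The mollification is superfluous: Cauchy--Schwarz in $\alpha$ bounds the age-shift term in $L^2(\Omega)$ directly by $\omega_{\mathcal B}(h)\|y_k\|_{C([0,T];X)}$. Likewise, the strips $\alpha<h$ and $\alpha>a_{\max}-h$ are $O(h^{1/2})$ from $\|y_k\|_{C([0,T];X)}$ alone, so you do not need the renewal or outflow traces there.

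When $\partial_\alpha\mathcal B$ is available, the paper's route yields an actual derivative bound on $\rho_k$, not just a translation modulus. Yours proves the lemma under the assumptions actually stated, and it never manipulates $\partial_t y_k$ and $\partial_a y_k$ separately, which the energy theory does not control.
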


\begin{proof}
To apply Simon's compactness theorem (or Aubin--Lions lemma) to \(\{\rho_k\}\), we need to establish control over its time derivative \(\partial_t \rho_k\) in a suitable negative Sobolev space, specifically \(L^2(0,T; H^{-1}(\Omega))\). Differentiating \(\rho_k\) with respect to \(t\) yields
\[
\partial_t \rho_k(t,x) = \int_{\mathcal I_a} \partial_t \mathcal B(E_k(t), \alpha, x) y_k \, d\alpha + \int_{\mathcal I_a} \mathcal B(E_k(t), \alpha, x) \partial_t y_k \, d\alpha .
\]
The first term on the right-hand side is readily bounded in \(L^2(0,T; L^2(\Omega))\) due to the assumed smoothness of \(\mathcal B\) with respect to \(E\) and the boundedness of \(\partial_t E_k\). 

For the second term, we substitute the state relation \(\partial_t y_k = -\partial_a y_k + \nabla_x \cdot (\Sigma \nabla_x y_k) - C_k y_k + F_k\). The critical part is to justify the integrated diffusion term
\[
G_k(t,x) := \int_{\mathcal I_a} \mathcal B(E_k(t), \alpha, x) \nabla_x \cdot (\Sigma \nabla_x y_k(t, \alpha, x)) \, d\alpha
\]
as a bounded sequence in \(L^2(0,T; H^{-1}(\Omega))\). To this end, let \(\phi \in H^1_0(\Omega)\) be an arbitrary test function. Using the duality pairing between \(H^{-1}(\Omega)\) and \(H^1_0(\Omega)\) and applying integration by parts in \(x\), we obtain
\begin{align*}
\langle G_k(t), \phi \rangle_{H^{-1}, H^1_0} 
&= \int_{\mathcal I_a} \langle \nabla_x \cdot (\Sigma \nabla_x y_k), \mathcal B(E_k, \alpha, \cdot)\phi \rangle_{H^{-1}, H^1_0} \, d\alpha \\
&= -\int_{\mathcal I_a} \int_\Omega \Sigma \nabla_x y_k \cdot \nabla_x \big( \mathcal B(E_k, \alpha, x) \phi(x) \big) \, dx \, d\alpha \\
&= -\int_{\mathcal I_a} \int_\Omega \Sigma \nabla_x y_k \cdot \big( \phi \nabla_x \mathcal B + \mathcal B \nabla_x \phi \big) \, dx \, d\alpha .
\end{align*}
By applying H\"{o}lder's inequality and using the fact that \(\Sigma \in L^\infty(\Omega)\) and \(\mathcal B(E, \alpha, \cdot) \in W^{1,\infty}(\Omega)\) uniformly (Assumption~\ref{ass:compactness-data}), we can estimate:
\begin{align*}
\left| \langle G_k(t), \phi \rangle_{H^{-1}, H^1_0} \right| 
&\le \|\Sigma\|_{L^\infty} \|\mathcal B\|_{W^{1,\infty}} \int_{\mathcal I_a} \|\nabla_x y_k(t,\alpha,\cdot)\|_{L^2(\Omega)} \big( \|\phi\|_{L^2(\Omega)} + \|\nabla_x \phi\|_{L^2(\Omega)} \big) \, d\alpha \\
&\le C \left( \int_{\mathcal I_a} \|\nabla_x y_k(t,\alpha,\cdot)\|_{L^2(\Omega)} \, d\alpha \right) \|\phi\|_{H^1_0(\Omega)} .
\end{align*}
Using the Cauchy--Schwarz inequality with respect to \(\alpha\) yields
\[
\int_{\mathcal I_a} \|\nabla_x y_k(t,\alpha,\cdot)\|_{L^2(\Omega)} \, d\alpha \le \sqrt{a_{\max}} \|y_k(t)\|_{L^2(\mathcal I_a; H^1(\Omega))} .
\]
Therefore, the \(H^{-1}(\Omega)\) norm of \(G_k(t)\) satisfies
$\displaystyle \|G_k(t)\|_{H^{-1}(\Omega)} \le C \|y_k(t)\|_{L^2(\mathcal I_a; H^1(\Omega))} $.
Squaring and integrating from \(0\) to \(T\) gives
\[
\int_0^T \|G_k(t)\|^2_{H^{-1}(\Omega)} \, dt \le C \int_0^T \|y_k(t)\|^2_{L^2(\mathcal I_a; H^1(\Omega))} \, dt = C \|y_k\|^2_{L^2(0,T; L^2(\mathcal I_a; H^1(\Omega)))} .
\]
Since \(\{y_k\}\) is uniformly bounded in \(L^2(0,T; L^2(\mathcal I_a; H^1(\Omega)))\), the sequence \(\{G_k\}\) is uniformly bounded in \(L^2(0,T; H^{-1}(\Omega))\). 

The remaining terms resulting from \(\partial_a y_k\) (via integration by parts in \(a\)) and lower-order terms are similarly shown to be bounded in \(L^2(0,T; L^2(\Omega)) \hookrightarrow L^2(0,T; H^{-1}(\Omega))\). Consequently, \(\{\partial_t \rho_k\}\) is uniformly bounded in \(L^2(0,T; H^{-1}(\Omega))\). Since \(\{\rho_k\}\) is also bounded in \(L^2(0,T; H^1(\Omega))\) via the spatial regularity of \(\mathcal B\), Aubin--Lions--Simon theorem guarantees that \(\{\rho_k\}\) is relatively compact in \(L^2(0,T; L^2(\Omega))\).
\end{proof}

The next theorem is the conditional compactness statement; it is proved by a
characteristic Duhamel representation and the compactness of the spatial
evolution family, not by an appeal to standard Aubin--Lions (which would
require control of $\partial_t y_k$, unavailable here). The structure of the
argument follows the transport--diffusion theory for age-structured systems
\cite{webb1985theory,langlais1985nonlinear,yu2026chemotactic}.

\begin{theorem}[Conditional transport--diffusion compactness]
\label{thm:transport-diffusion-compactness}
Let Assumptions~\ref{ass:diffusion}--\ref{ass:compactness-data} hold. Assume in addition that the family of states \(\{S(u):u\in U_{\rm ad}\}\) satisfies a uniform time-translation compactness estimate in \(L^2(\mathcal I_a\times \Omega)\):
\[
\lim_{\tau\downarrow0}
\sup_{u\in U_{\rm ad}}
\|S(u)(\cdot+\tau,\cdot,\cdot)-S(u)(\cdot,\cdot,\cdot)\|_{L^2(0,T-\tau;L^2(\mathcal I_a\times \Omega))}
=0 .
\]
If \(u_k\overset{*}{\rightharpoonup}u\) in \(L^\infty(Q;\R^m)\), then, after extraction of a subsequence, $S(u_k)\to S(u)$ strongly in $L^2(Q;\R^n)$.
\end{theorem}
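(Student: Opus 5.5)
The plan is to prove relative compactness of $\{y_k\}:=\{S(u_k)\}$ in $L^2(Q;\R^n)$ by a Riesz--Fréchet--Kolmogorov (Simon-type) criterion in the three variables $(t,a,x)$, and then to identify the limit with $S(u)$ by passing to the limit in the weak formulation.

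\emph{Compactness.} By Theorem~\ref{thm:closed-loop-state-wp}, $\{y_k\}$ is bounded in $\mathcal Y$, hence in $L^2(0,T;V)$ and in $C([0,T];X)$, uniformly in $k$. The three directions are handled separately.
\begin{itemize}
\item Translations in $t$ are controlled uniformly by the hypothesis of the theorem.
\item Translations in $x$ are controlled by the $L^2(0,T;V)$ bound: on interior subdomains one has $\|y_k(\cdot,\cdot,\cdot+h)-y_k\|_{L^2}\le |h|\,\|\nabla_x y_k\|_{L^2(Q)}$, and near the Lipschitz boundary one uses a bounded extension operator $H^1(\Omega)\to H^1(\R^d)$ applied for a.e. $(t,a)$.
\item Translations in $a$ are the delicate direction, since only $D_{t,a}y_k$ is controlled. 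Write $y_k(t,a+\delta)-y_k(t,a)$ as the transport increment $y_k(t+\delta,a+\delta)-y_k(t,a)$ minus the time increment $y_k(t+\delta,a+\delta)-y_k(t,a+\delta)$. The time increment is small uniformly by hypothesis.
\end{itemize}

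For the transport increment I use the characteristic Duhamel representation from the proof of Lemma~\ref{lem:frozen-linear-wp}: along $t-a=s$,
\[
Y_k(s,a+\delta)-Y_k(s,a)=\bigl(U(a+\delta,a)-I\bigr)Y_k(s,a)+\int_a^{a+\delta}U(a+\delta,\sigma)g_k(s+\sigma,\sigma)\,d\sigma,
\]
where $g_k=F-C_k y_k$. The source $g_k$ is bounded in $L^2(0,T;V')$ uniformly: the coefficients are bounded by the $U_{\rm ad}$ bounds, by \eqref{eq:A-bounds}, and by Assumption~\ref{ass:nonlocal}, with $\|E_k\|_C$ and $\|y_k\|_{C(X)}$ bounded. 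The smoothing bound $\|U(a,\sigma)\|_{(H^1)'\to L^2}\le C(a-\sigma)^{-1/2}$ then makes the integral term $O(\delta^{1/4})$ in $L^2$ after Cauchy--Schwarz. The first term satisfies $\|(U(a+\delta,a)-I)v\|_{L^2(\Omega)}\le C\delta^{1/2}\|v\|_{H^1(\Omega)}$, which is integrable against the $L^2(0,T;V)$ bound.

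Characteristics entering through $a=0$ carry the renewal trace. This trace is compact in $L^2((0,T)\times\Omega)$ by Lemma~\ref{lem:renewal-trace-compactness}, and the outflow trace is bounded by Lemma~\ref{lem:outflow-trace}. Cut-off regions of width $\delta$ near $a=0$, $a=a_{\max}$, $t=0$ and $t=T$ have small $L^2$ mass uniformly, by the $C([0,T];X)$ bound and the trace bounds. Kolmogorov's criterion then gives a subsequence $y_k\to y$ strongly in $L^2(Q)$, and weakly in $L^2(0,T;V)$ with $D_{t,a}y_k\rightharpoonup D_{t,a}y$ in $L^2(0,T;V')$.

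\emph{Identification.} By Lemma~\ref{lem:Ek-H1}, $E_k\to E$ in $C([0,T])$ along a further subsequence. Since $|E_k(t)-E_y(t)|\le\|\chi\|_X\|y_k(t)-y(t)\|_X$, the $L^2$ convergence gives $E=E_y$. I then pass to the limit term by term in the weak formulation tested by $\varphi\in L^2(0,T;V)\cap L^\infty$:
\begin{itemize}
\item $\mathcal A(E_k)y_k\to\mathcal A(E_y)y$ strongly, by the $C^1$ dependence on $E$ and uniform convergence of $E_k$.
\item $\mathcal N[y_k]y_k\to\mathcal N[y]y$ in $L^1$, using the $L^\infty$ Lipschitz bound of Assumption~\ref{ass:nonlocal} together with $y_k\to y$ in $L^2$.
\item The bilinear term $\int u_kK_\ell y_k\cdot\varphi$ is a weak-$*$ times strong product: $K_\ell y_k\cdot\varphi\to K_\ell y\cdot\varphi$ in $L^1(Q)$ while $u_k\overset{*}{\rightharpoonup}u$ in $L^\infty$.
\item The renewal condition passes to the limit by Lemma~\ref{lem:renewal-trace-compactness} and the weak convergence of traces in $\mathcal Y$.
\end{itemize}
Hence $y$ is a weak solution for the control $u$. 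By the uniqueness in Theorem~\ref{thm:closed-loop-state-wp}, $y=S(u)$.

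The main obstacle is the age-translation estimate. There the transport increment must be split along characteristics, and the boundary layer at $a=0$ must be handled via the renewal trace compactness; the fine constants of the evolution-family estimate $(U(a+\delta,a)-I)$ on $H^1$ are where I expect the real work. If that estimate is awkward, I would fall back on showing $\delta$-continuity of $U$ strongly on $L^2$ combined with compactness of $U(a,\sigma)$ and an $\varepsilon$-net argument.
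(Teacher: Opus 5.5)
Your proof is correct, and it takes a more explicit route than the paper's. The paper applies Simon's theorem in $L^2(0,T;L^2(\mathcal I_a\times\Omega))$. It takes the $t$-direction from the translation hypothesis and attributes the remaining compactness to the smoothing of $U(a,\sigma)$ and ``the characteristic Volterra integration,'' but never writes an estimate in the age variable. Its identification step (limit in the weak formulation, then uniqueness from Theorem~\ref{thm:closed-loop-state-wp}) is the same as yours, only less detailed.

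You instead verify a Kolmogorov--Riesz criterion in $(t,a,x)$. You supply the age estimate by writing an $a$-translation as a transport increment along $t-a=\mathrm{const}$ minus a $t$-translation. This is exactly the ingredient the paper leaves implicit. $L^2(\mathcal I_a;H^1(\Omega))$ is not compactly embedded in $L^2(\mathcal I_a\times\Omega)$, so diffusion alone gives nothing in $a$. What closes the age direction is the hypothesis (which controls the characteristic label) combined with the transport bound (which controls motion along characteristics).

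The paper's argument is shorter; yours is self-contained. It also shows that Lemma~\ref{lem:renewal-trace-compactness} is not needed for compactness. Nor is it needed for the renewal identification once $y_k\to y$ strongly, since \eqref{eq:renewal-estimate} and the Lipschitz dependence on $E$ already give $\mathcal R_{E_k}y_k\to\mathcal R_{E_y}y$.

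Three technical remarks.

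First, you can bypass the evolution-family constants you worry about. In $(H^1(\Omega))'$ one has $w_k(t,a):=y_k(t+\delta,a+\delta)-y_k(t,a)=\int_0^\delta D_{t,a}y_k(t+r,a+r)\,dr$, so $\iint\|w_k\|_{(H^1)'}^2\,da\,dt\le\delta^2\|D_{t,a}y_k\|_{L^2(0,T;V')}^2$. Combining $\|w\|_{L^2(\Omega)}^2\le\|w\|_{(H^1)'}\|w\|_{H^1}$ with the $L^2(0,T;V)$ bound then gives $\|w_k\|_{L^2}\le C\delta^{1/2}$ uniformly in $k$.

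Second, if you keep the Duhamel route, the Cauchy--Schwarz step for the integral term must be the weighted one. This is Young's inequality in $a$ with the $L^1$ kernel $\tau^{-1/2}\mathbf 1_{[0,\delta]}(\tau)$, which yields $O(\delta^{1/2})$. The unweighted version produces the divergent $\int_0^\delta\tau^{-1}\,d\tau$.

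Third, the two trace bounds alone do not give small mass in the strips $\{a<\eta\}$ and $\{a>a_{\max}-\eta\}$. Derive it from the uniform age-slice bound $\sup_a\|y_k(\cdot,a,\cdot)\|_{L^2((0,T)\times\Omega)}\le C$. That bound follows from the energy identity integrated over $\{a'<a\}$ together with the inflow-trace bound, or from your Duhamel formula started at the inflow.
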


\begin{proof}
Let \(y_k = S(u_k)\) be the sequence of states corresponding to the controls \(u_k\). Since \(u_k\) is uniformly bounded in \(L^\infty(Q;\R^m)\) (as \(u_k \overset{*}{\rightharpoonup} u\)), the standard well-posedness and a priori estimates (e.g., energy estimates along characteristics) imply that the sequence \(\{y_k\}_{k\in\N}\) is uniformly bounded in \(L^2(Q;\R^n)\).

To establish strong convergence, we utilize Simon's compactness theorem (see e.g., \cite{simon1986compact}). The spatial and age regularization is provided by the diffusion and the characteristic Volterra integration. Specifically, for any fixed \(t\) and \(a\), the evolution operator \(U(a,\sigma)\) maps into domains with higher spatial regularity (under Assumption~\ref{ass:diffusion}), granting compactness in the spatial domain \(\Omega\). 

However, as integration along characteristics \(s=t-a\) does not inherently regularize the \(s\)-variable, the spatial compactness alone is insufficient for strong compactness in \(L^2(Q;\R^n)\). This obstruction is resolved by the uniform time-translation assumption.

By the assumed uniform time-translation condition, we have:
\[
\lim_{\tau\downarrow0} \sup_{k\in\N} \|y_k(\cdot+\tau) - y_k(\cdot)\|_{L^2(0,T-\tau;L^2(\mathcal I_a\times \Omega))} = 0.
\]
Combining this time-translation equicontinuity with the uniform boundedness of \(\{y_k\}\) in \(L^2(Q;\R^n)\) and the compact embedding properties in the spatial variables provided by the diffusion operator, the conditions of Simon's theorem are satisfied. 

Therefore, \(\{y_k\}_{k\in\N}\) is relatively compact in \(L^2(0,T; L^2(\mathcal I_a\times \Omega)) \simeq L^2(Q;\R^n)\). It follows that there exists a subsequence, still denoted by \(y_k\), such that \(y_k \to y^*\) strongly in \(L^2(Q;\R^n)\). 

Finally, passing to the limit in the weak formulation of the state equation with \(u_k \overset{*}{\rightharpoonup} u\) and \(y_k \to y^*\), the bilinear terms converge in the sense of distributions. By uniqueness of the solution to the state equation, we conclude \(y^* = S(u)\), which completes the proof.
\end{proof}

\begin{theorem}[Existence of optimal controls]
\label{thm:existence-localized-observations}
Under Assumptions~\ref{ass:diffusion}--\ref{ass:compactness-data} and the
default regime (O1), the problem $\min_{U_{\rm ad}}j$ has a global minimizer.
\end{theorem}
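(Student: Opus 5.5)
The plan is the direct method of the calculus of variations, with Theorem~\ref{thm:transport-diffusion-compactness} supplying the compactness of the states. Since $\alpha>0$ and every term of $J$ is nonnegative, $j$ is bounded below on $U_{\rm ad}$, so $j^*:=\inf_{U_{\rm ad}}j$ is finite. Take a minimizing sequence $\{u_k\}\subset U_{\rm ad}$ with $j(u_k)\to j^*$. The set $U_{\rm ad}$ is bounded in $L^\infty(Q;\R^m)$, convex and weak-$*$ closed, because the box constraints are preserved under weak-$*$ limits (test against nonnegative $L^1$ functions). By Banach--Alaoglu, after extraction we have $u_k\overset{*}{\rightharpoonup}\bar u$ in $L^\infty(Q;\R^m)$ with $\bar u\in U_{\rm ad}$.

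Next, let $y_k=S(u_k)$. Theorem~\ref{thm:closed-loop-state-wp} gives a $u$-independent bound on $\|y_k\|_{\mathcal Y}$ and $\|E_{u_k}\|_{C}$. Theorem~\ref{thm:transport-diffusion-compactness} then yields, along a further subsequence, $y_k\to\bar y:=S(\bar u)$ strongly in $L^2(Q;\R^n)$. This step is where its time-translation hypothesis enters; I take it as part of the standing assumptions under which existence is asserted.

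Next, pass to the limit in each term of $J$. In regime (O1), each $\mathcal O_m y=(\Theta_m,y)_{L^2(Q)}$ is a bounded linear functional, so $\mathcal O_m y_k\to\mathcal O_m\bar y$ and hence $J_{\rm obs}(y_k)\to J_{\rm obs}(\bar y)$. Since $C_I$ is bounded, the tracking term also converges, by strong $L^2$ convergence. Weak-$*$ convergence in $L^\infty(Q)$ implies weak convergence in $L^2(Q)$ because $Q$ is bounded, so weak lower semicontinuity of the norm gives $\int_Q|\bar u|^2\le\liminf\int_Q|u_k|^2$. Combining the three terms, $j(\bar u)\le\liminf j(u_k)=j^*$, so $\bar u$ is a global minimizer.

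The main obstacle is the identification of the limit state, namely that the strong limit of $S(u_k)$ is $S(\bar u)$ even though the control enters bilinearly and converges only weakly-$*$. Theorem~\ref{thm:transport-diffusion-compactness} already asserts this. If I had to verify it, I would proceed as follows.
\begin{itemize}
\item The product $u_k K_\ell y_k\rightharpoonup \bar u K_\ell\bar y$ in $L^1$, as a weak-$*$ times strong product.
\item $\mathcal N[y_k]y_k\to\mathcal N[\bar y]\bar y$, by Assumption~\ref{ass:nonlocal}.
\item $E_k\to\bar E$ in $C([0,T])$, by Lemma~\ref{lem:Ek-H1} together with the strong convergence of $y_k$.
\item The renewal traces converge, by Lemma~\ref{lem:renewal-trace-compactness}.
\end{itemize}
Uniqueness in Theorem~\ref{thm:closed-loop-state-wp} then closes the argument. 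Finally, because every subsequence has a further subsequence with the same limit, the full sequence argument is consistent.
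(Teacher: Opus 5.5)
Your proposal is correct and follows the paper's proof essentially step for step: a minimizing sequence, Banach--Alaoglu to get $u_k\overset{*}{\rightharpoonup}\bar u\in U_{\rm ad}$, Theorem~\ref{thm:transport-diffusion-compactness} to get $S(u_k)\to S(\bar u)$ strongly in $L^2(Q)$, continuity of the (O1) functionals and of the tracking term, and weak lower semicontinuity of $\|u\|_{L^2}^2$. You are also right that the uniform time-translation hypothesis of Theorem~\ref{thm:transport-diffusion-compactness} is not among Assumptions~\ref{ass:diffusion}--\ref{ass:compactness-data}; the paper's proof invokes that theorem in the same way, so the existence result as proved is conditional on that extra hypothesis.
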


\begin{proof}
Let $\{u_k\}$ be minimizing. By Banach--Alaoglu,
$u_k\overset{\ast}{\rightharpoonup}\bar u\in U_{\rm ad}$ (subsequence). By
Theorem~\ref{thm:transport-diffusion-compactness}, $S(u_k)\to S(\bar u)$
strongly in $L^2(Q)$, so the distributed tracking term passes to the limit and,
since each $\mathcal O_m$ is continuous on $L^2(Q)$ in regime (O1),
$J_{\rm obs}(S(u_k))\to J_{\rm obs}(S(\bar u))$. As weak$^\ast$ convergence in
$L^\infty$ implies weak convergence in $L^2$ on bounded sets,
$\|\bar u\|_{L^2}^2\le\liminf_k\|u_k\|_{L^2}^2$, so
$j(\bar u)\le\liminf_k j(u_k)=\inf j$.
\end{proof}

\begin{corollary}[Existence: conditional regimes]
\label{cor:existence-point}
In regime (O2), if additionally $\{S(u_k)\}$ is relatively compact in
$C([0,T];X)$, or in regime (O3) if $\{S(u_k)\}$ is relatively compact in a
space compactly embedded in $C(\overline Q;\R^n)$, then a global minimizer
exists, and in (O3) $\mathcal K^*(\bar y,\bar p)\in L^2(Q)$.
\end{corollary}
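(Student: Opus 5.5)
The plan is to repeat the direct-method argument of Theorem~\ref{thm:existence-localized-observations}, replacing the $L^2(Q)$ compactness of Theorem~\ref{thm:transport-diffusion-compactness} with the compactness assumed in each regime. Take a minimizing sequence $\{u_k\}\subset U_{\rm ad}$. By Banach--Alaoglu, pass to a subsequence with $u_k\overset{*}{\rightharpoonup}\bar u\in U_{\rm ad}$; the box constraints are convex and closed, so weak$^*$ limits stay admissible. By hypothesis, $\{y_k\}=\{S(u_k)\}$ is relatively compact in $C([0,T];X)$ for (O2), or in a space $\mathcal Y_\sharp\Subset C(\overline Q;\R^n)$ for (O3). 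A further subsequence therefore gives $y_k\to y^*$ in that topology, and hence also in $L^2(Q)$, because $Q$ is bounded and both target spaces embed continuously into $L^2(Q)$.

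\textbf{Identifying the limit.} I will pass to the limit in the weak formulation exactly as at the end of the proof of Theorem~\ref{thm:transport-diffusion-compactness}. Strong convergence in $C([0,T];X)$ gives $E_{y_k}\to E_{y^*}$ uniformly, since $|E_{y_k}(t)-E_{y^*}(t)|\le\|\chi\|_X\|y_k(t)-y^*(t)\|_X$. Together with Assumptions~\ref{ass:feedback-coefficients}--\ref{ass:renewal}, this makes $\mathcal A(E_{y_k})y_k$ and the renewal integrals converge strongly. For the nonlocal term, Assumption~\ref{ass:nonlocal} gives $\mathcal N[y_k]\to\mathcal N[y^*]$ in $L^\infty$ uniformly in $t$, so $\mathcal N[y_k]y_k\to\mathcal N[y^*]y^*$ in $L^2(Q)$. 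For the bilinear term, $\mathcal K(u_k)y_k\rightharpoonup\mathcal K(\bar u)y^*$ in $L^1(Q)$, and hence in distributions, because it is a weak$^*$-times-strong product. The energy bound \eqref{eq:closed-loop-apriori} is uniform in $k$, so $y^*\in\mathcal Y$. Uniqueness in Theorem~\ref{thm:closed-loop-state-wp} then yields $y^*=S(\bar u)$.

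\textbf{Lower semicontinuity.} In (O2) the functional $y\mapsto(\eta_m,y(t_m))_X$ is continuous on $C([0,T];X)$. In (O3) point evaluation $y\mapsto e_I\cdot y(t_m,a_m,x_m)$ is continuous on $C(\overline Q)$. Either way $J_{\rm obs}(y_k)\to J_{\rm obs}(\bar y)$. The tracking term converges by the $L^2(Q)$ convergence, and the control cost is weakly lower semicontinuous. Hence $j(\bar u)\le\liminf_k j(u_k)=\inf j$, so $\bar u$ is a global minimizer.

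\textbf{The (O3) regularity claim.} Let $\bar p$ be the feedback-corrected adjoint. The (O3) part of Assumption~\ref{ass:observation-regimes} states that the reduced adjoint maps point sources into $\mathcal P_\sharp\hookrightarrow L^2(Q)$. Theorem~\ref{thm:low-rank-feedback-alternative}, applied with this (O3) analogue, gives $\bar p=p_{\rm red}+\mathcal G^*(\theta\chi)$ with $\theta\in L^2(0,T)$. The correction term lies in the (O1)-type space $\mathcal P\subset L^2(Q)$, since $\theta\chi\in L^2(Q)$. Therefore $\bar p\in L^2(Q)$. Because $\bar y\in C(\overline Q)\subset L^\infty(Q)$ and $K_\ell\in L^\infty$, each component $(K_\ell\bar y)\cdot\bar p$ of \eqref{eq:state-gradient-representation} lies in $L^2(Q)$.

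I expect the main obstacle to be this last step. It needs the Volterra-kernel hypothesis, or at least boundedness of $\ell_{\bar y,\bar u}$ on $\mathcal P_\sharp$, so that $\bar p$ is well defined in the singular regime. If that is not available, I would instead read the (O3) clause of Assumption~\ref{ass:observation-regimes} as directly asserting $\mathcal K^*(\bar y,\bar p)\in L^2(Q)$. The claim then follows from that assumption at the minimizer.
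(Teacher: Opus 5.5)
Your proposal is correct and follows the paper's route. You rerun the direct method of Theorem~\ref{thm:existence-localized-observations}, using the assumed compactness to make $J_{\rm obs}$ continuous, and in (O3) you combine $\bar p\in L^2(Q)$ with the boundedness of $\bar y\in C(\overline Q;\R^n)$ as in Proposition~\ref{prop:Kstar-L2}. Two of your steps go slightly further than the paper's one-line proof ``$\bar p\in\mathcal P_\sharp\hookrightarrow L^2(Q)$'': you identify the limit directly through the weak formulation, which avoids the time-translation hypothesis of Theorem~\ref{thm:transport-diffusion-compactness}, and you split $\bar p=p_{\rm red}+p_{\rm fb}$; both simply make explicit what the paper takes for granted.
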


\begin{proof}
The extra compactness makes $J_{\rm obs}$ continuous in the relevant topology;
the proof of Theorem~\ref{thm:existence-localized-observations} applies. In
(O3), $\bar p\in\mathcal P_\sharp\hookrightarrow L^2(Q)$ gives the stated
integrability.
\end{proof}

\subsection{Reduced gradient and first-order optimality system}
\label{subsec:gradient-first-order}
Let $\bar u$ be a local minimizer (in $L^\infty(Q;\R^m)$), $\bar y=S(\bar u)$,
and $\bar p$ the feedback-corrected adjoint of
Theorem~\ref{thm:low-rank-feedback-alternative}. Define
$\mathcal K^*(\bar y,\bar p)$ by \eqref{eq:state-gradient-representation}; in
regime (O1) it lies in $L^1(Q;\R^m)$.

\begin{lemma}[Derivative of the reduced cost]
\label{lem:reduced-gradient-section4}
For $h\in L^\infty(Q;\R^m)$,
$\displaystyle j'(u)h
=
\int_Q
\left(
\alpha u+\mathcal K^*(y,p)
\right)\cdot h $,
the integral being well defined because $\alpha u\in L^\infty$,
$\mathcal K^*(y,p)\in L^1$, and $h\in L^\infty$.
\end{lemma}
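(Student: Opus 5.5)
The plan is to apply the chain rule to $j(u)=J(S(u),u)$ and then remove $z=S'(u)h$ using the adjoint identity \eqref{eq:adjoint-gradient-identity}.

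\emph{Step 1: derivative of the cost in the state direction.} By Theorem~\ref{thm:control-to-state-differentiability}, $S:\mathcal U\to\mathcal Y$ is Fr\'echet differentiable. The map $y\mapsto J(y,u)$ is a quadratic functional that is continuous on $L^2(Q)$, and $\mathcal Y\hookrightarrow L^2(Q)$. The $u$-part $\tfrac\alpha2\|u\|_{L^2}^2$ is Fr\'echet differentiable on $L^\infty(Q)$, with derivative $h\mapsto\int_Q\alpha u\cdot h$. The chain rule then gives
\[
j'(u)h=J_{\rm obs}'(y)z+\gamma\int_Q (C_Iy-y_d)\cdot C_Iz+\int_Q\alpha u\cdot h ,\qquad z=S'(u)h .
\]
In regime (O1) the first two terms combine into $\langle q_{\rm run}+q_{\rm obs},z\rangle_{L^2(Q)}$, with $q_{\rm run}$ and $q_{\rm obs}$ as defined in Section~\ref{sec:singular-adjoint-feedback} but taken at the base point $u$ in place of $\bar u$. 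The linearization of Section~\ref{sec:singular-adjoint-feedback} was done at an arbitrary $\bar u\in U_{\rm ad}$, so it applies at $u$ unchanged.

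\emph{Step 2: adjoint substitution.} Let $p$ be the feedback-corrected adjoint at $u$ given by Theorem~\ref{thm:low-rank-feedback-alternative}. Then $p\in\mathcal P$, so $p\in L^2(0,T;V)\cap C([0,T];X)$. Identity \eqref{eq:adjoint-gradient-identity} gives
\[
\langle q,z\rangle=-\sum_\ell\int_Q h_\ell (K_\ell y)\cdot p=\int_Q\mathcal K^*(y,p)\cdot h .
\]
Substituting this into Step~1 yields the stated formula.

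\emph{Step 3: well-definedness.} The integrand is integrable because $|(K_\ell y)\cdot p|\le\|K_\ell\|_\infty|y||p|$. Cauchy--Schwarz on $Q$ gives $\|(K_\ell y)\cdot p\|_{L^1}\le C\|y\|_{L^2(Q)}\|p\|_{L^2(Q)}$, so $\mathcal K^*\in L^1$. Pairing with $h\in L^\infty$ is therefore finite, and $\alpha u\cdot h\in L^\infty\subset L^1$ since $Q$ is bounded.

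\emph{Main obstacle.} The delicate step is justifying \eqref{eq:adjoint-gradient-identity} itself, i.e.\ the duality pairing between $z\in\mathcal Y$ and $p\in\mathcal P$. This requires the integration by parts in $t$ and $a$ to be legitimate. Boundary terms arise at $t=0$ and $t=T$ (they vanish because $z(0)=0$ and $p(T)=0$), at $a=a_{\max}$ (it vanishes because $p(t,a_{\max})=0$), and at $a=0$. The $a=0$ term must cancel between the renewal laws of $z$ and $p$, including the feedback renewal piece $\delta E_z\,b_y$ that is absorbed into $\ell_{y,u}(p)$. Since $D_{t,a}z$ and $D_{t,a}p$ lie only in $L^2(0,T;V')$, I would argue by density. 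Approximate the source $\mathcal K(h)y$ and $q$ by smooth data, for which the characteristic mild solutions are regular enough to integrate by parts along $t-a=\text{const}$. Then pass to the limit using the energy estimates \eqref{eq:frozen-estimate} and \eqref{eq:Gstar-bound}, together with continuity of the traces at $a=0$ in $L^2((0,T)\times\Omega)$. The feedback terms cancel exactly by the duality identity used to define $\ell_{y,u}$ in \eqref{eq:ell-definition}.
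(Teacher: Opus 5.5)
Your proposal is correct and follows the paper's proof. The chain rule gives $j'(u)h=\langle q_{\rm run}+q_{\rm obs},z\rangle+\alpha\int_Q u\cdot h$ with $z=S'(u)h$, and \eqref{eq:adjoint-gradient-identity} together with \eqref{eq:state-gradient-representation} turns the first term into $\int_Q\mathcal K^*(y,p)\cdot h$. Your justification of \eqref{eq:adjoint-gradient-identity} goes beyond the paper, which only asserts it, with one caveat: smoothing the data does not by itself make the solutions smoother when $\Sigma$, $\mathcal A$ and $u$ are only $L^\infty$. The cleaner route is therefore to apply the Lions--Magenes product rule along each characteristic $t-a=s$, where $\partial_a$ of the straightened functions is exactly $D_{t,a}$ and lies in $L^2((H^1)')$.
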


\begin{proof}
With $z=S'(u)h$, $j'(u)h=\langle q_{\rm run}+q_{\rm obs},z\rangle+\alpha\int_Q
u\cdot h$, and \eqref{eq:adjoint-gradient-identity} with
\eqref{eq:state-gradient-representation} gives $\langle q_{\rm run}+q_{\rm
obs},z\rangle=\int_Q\mathcal K^*(y,p)\cdot h$.
\end{proof}

The reduced derivative is first obtained as a functional on
\(L^\infty(Q;\R^m)\). Thus the natural gradient density belongs only to
\(L^1(Q;\R^m)\) in the default energy regime. The projection formula is
therefore understood pointwise a.e.; it is not an \(L^2\)-gradient identity
unless the additional boundedness of Proposition~\ref{prop:Kstar-L2} holds.

\begin{theorem}[First-order necessary condition]
\label{thm:first-order-section4}
Let $\bar u$ be a local minimizer. Then
\begin{equation}
\label{eq:variational-inequality-section4}
\int_Q
\left(
\alpha\bar u+\mathcal K^*(\bar y,\bar p)
\right)\cdot(u-\bar u)
\ge0
\qquad
\forall u\in U_{\rm ad},
\end{equation}
the integrand lying in $L^1(Q)$ since $u-\bar u\in L^\infty$. Equivalently,
componentwise and a.e.,
\begin{equation}
\label{eq:projection-formula-section4}
\bar u_\ell
=
\Pi_{[u_{\min,\ell},u_{\max,\ell}]}
\left(
-\alpha^{-1}\mathcal K_\ell^*(\bar y,\bar p)
\right),
\qquad
\ell=1,\ldots,m .
\end{equation}
\end{theorem}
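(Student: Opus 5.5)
The variational inequality follows from local minimality and the derivative of Lemma~\ref{lem:reduced-gradient-section4}; the projection formula then follows from a pointwise localization argument. Fix $u\in U_{\rm ad}$. By convexity of $U_{\rm ad}$, $u_s:=\bar u+s(u-\bar u)\in U_{\rm ad}$ for $s\in[0,1]$, and $\|u_s-\bar u\|_{L^\infty}\le s\|u-\bar u\|_{L^\infty}$. Hence $u_s$ lies in the $L^\infty$-neighborhood where $\bar u$ is minimal for small $s$, so $j(u_s)\ge j(\bar u)$.

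Since $S$ is $C^1$ on the open set $\mathcal U\supset U_{\rm ad}$ (Theorem~\ref{thm:control-to-state-differentiability}, Proposition~\ref{prop:C1-state-map}), and $J$ is quadratic in $(y,u)$ with (O1) observations bounded on $L^2(Q)$, the reduced cost $j$ is Fréchet differentiable on $\mathcal U$. Dividing $j(u_s)-j(\bar u)\ge0$ by $s$ and letting $s\downarrow0$ gives $j'(\bar u)(u-\bar u)\ge0$. Lemma~\ref{lem:reduced-gradient-section4} with $h=u-\bar u\in L^\infty$ identifies this with \eqref{eq:variational-inequality-section4}. That lemma uses the feedback-corrected adjoint $\bar p$ of Theorem~\ref{thm:low-rank-feedback-alternative}, whose existence makes the feedback terms cancel in \eqref{eq:adjoint-gradient-identity}. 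The integrand is in $L^1(Q)$ because $\mathcal K^*(\bar y,\bar p)\in L^1$ and $u-\bar u\in L^\infty$.

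For the equivalence with \eqref{eq:projection-formula-section4}, write $g:=\alpha\bar u+\mathcal K^*(\bar y,\bar p)\in L^1(Q;\R^m)$.

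\emph{Inequality implies projection.} Fix $\ell$, a measurable set $A\subset Q$, and a value $v\in[u_{\min,\ell},u_{\max,\ell}]$. Test with the control $u$ equal to $\bar u$ except $u_\ell=v$ on $A$; it is admissible. This gives $\int_A g_\ell(v-\bar u_\ell)\ge0$ for every measurable $A$, hence $g_\ell(v-\bar u_\ell)\ge0$ a.e. for each fixed $v$. Running $v$ over a countable dense subset of the interval and using continuity in $v$ gives the inequality a.e. for all $v$ simultaneously. This is the pointwise variational inequality characterizing $\bar u_\ell(t,a,x)$ as the minimizer of $v\mapsto \tfrac\alpha2 v^2+\mathcal K^*_\ell v$ over the interval. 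Since $\alpha>0$, this strictly convex minimizer is $\Pi_{[u_{\min,\ell},u_{\max,\ell}]}(-\alpha^{-1}\mathcal K^*_\ell)$.

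\emph{Projection implies inequality.} The projection property gives $g_\ell(v-\bar u_\ell)\ge0$ pointwise for every admissible value $v$. Integrating over $Q$ and summing over $\ell$ recovers \eqref{eq:variational-inequality-section4}.

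\textbf{Main obstacle.} The delicate step is the localization, because $g$ is only in $L^1$, not $L^2$. The standard Hilbert-space projection argument is therefore unavailable. I would rely on needle-type variations with $L^\infty$ perturbations, which are admissible test directions, together with Lebesgue differentiation. This keeps every pairing an $L^1$–$L^\infty$ duality, so the projection holds pointwise a.e. and no $L^2$-gradient identity is needed.
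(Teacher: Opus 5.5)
Your proposal is correct and follows the paper's route: convexity of $U_{\rm ad}$ together with Lemma~\ref{lem:reduced-gradient-section4} gives \eqref{eq:variational-inequality-section4}, and localization over measurable sets then gives the pointwise condition and hence \eqref{eq:projection-formula-section4}. Your localization through admissible controls equal to $\bar u$ off $A$ is the careful form of the paper's test direction $h=\mathbf 1_S e_\ell$, which need not be feasible where $\bar u_\ell$ sits at a bound. Lebesgue differentiation is not needed, since testing on arbitrary measurable $A$ already yields the a.e.\ inequality.
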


\begin{proof}
Convexity of $U_{\rm ad}$ and Lemma~\ref{lem:reduced-gradient-section4} give
\eqref{eq:variational-inequality-section4}. Localizing the inequality with
$h=\mathbf1_S e_\ell\in L^\infty$ over measurable $S$ yields the pointwise
complementarity, hence \eqref{eq:projection-formula-section4} \cite{troltzsch2010optimal}.
The right-hand side of \eqref{eq:projection-formula-section4} is automatically
in $L^\infty(Q)$, since $\Pi_{[u_{\min,\ell},u_{\max,\ell}]}$ maps any
real-valued (here $L^1$) density into $[u_{\min,\ell},u_{\max,\ell}]$
pointwise; thus the projection formula is meaningful even when
$\mathcal K^*(\bar y,\bar p)$ is only $L^1(Q)$.
\end{proof}

\begin{proposition}[$L^2$ gradient under boundedness]
\label{prop:Kstar-L2}
If in addition $\bar y\in L^\infty(Q;\R^n)$ (e.g.\ in the smoothing regime of
Remark~\ref{rem:point-sufficient}, or when $y^0,F$ are bounded and a maximum
principle applies), then $\mathcal K^*(\bar y,\bar p)\in L^2(Q;\R^m)$, and the
variational inequality and switching analysis hold in $L^2(Q)$.
\end{proposition}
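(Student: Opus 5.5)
The plan is to show that each component $(K_\ell\bar y)\cdot\bar p$ of $\mathcal K^*(\bar y,\bar p)$ is the product of a bounded function and a square-integrable one. Since $K_\ell\in L^\infty(\mathcal I_a\times\Omega;\R^{n\times n})$ and $\bar y\in L^\infty(Q;\R^n)$ by hypothesis, the factor $K_\ell\bar y$ lies in $L^\infty(Q;\R^n)$. The feedback-corrected adjoint $\bar p$ of Theorem~\ref{thm:low-rank-feedback-alternative} lies in $\mathcal P\subset C([0,T];X)\hookrightarrow L^2(Q;\R^n)$. Hence
\[
\|(K_\ell\bar y)\cdot\bar p\|_{L^2(Q)}\le\|K_\ell\|_{L^\infty}\|\bar y\|_{L^\infty(Q)}\|\bar p\|_{L^2(Q)},
\]
so $\mathcal K^*(\bar y,\bar p)\in L^2(Q;\R^m)$ by \eqref{eq:state-gradient-representation}.

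Two points need checking.

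\emph{Membership of $\bar p$ in $\mathcal P$.} By \eqref{eq:operator-valued-feedback-formula}, $\bar p$ equals $p_{\rm red}\in\mathcal P$ (Proposition~\ref{prop:reduced-adjoint-solvability}) plus $\mathcal G^*(\theta\chi)$, where $\theta\in L^2(0,T)$ and $\theta\chi\in L^2(Q)$. By \eqref{eq:Gstar-bound}, this second term also lies in $\mathcal P$.

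\emph{The smoothing regime.} In the setting of Remark~\ref{rem:point-sufficient}, $\bar y\in C^\beta(\overline Q)\subset L^\infty(Q)$, so the hypothesis holds. If instead $\bar p$ is taken from $\mathcal P_\sharp\hookrightarrow L^2(Q)$, the same estimate applies.

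For the $L^2$ form of the optimality system, I would use the density of $L^\infty(Q;\R^m)$ in $L^2(Q;\R^m)$. Lemma~\ref{lem:reduced-gradient-section4} represents $j'(\bar u)$ on $L^\infty$ by the density $g:=\alpha\bar u+\mathcal K^*(\bar y,\bar p)$, which is now in $L^2(Q;\R^m)$. This functional is therefore bounded for the $L^2$ norm and extends uniquely to $L^2(Q;\R^m)$, so $g$ is the $L^2$-Riesz gradient. The variational inequality \eqref{eq:variational-inequality-section4} then reads $(g,u-\bar u)_{L^2(Q)}\ge0$, an $L^2$ inner product, and it holds for all $u\in U_{\rm ad}$ because $U_{\rm ad}\subset L^\infty\cap L^2$ on the bounded set $Q$. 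The projection formula \eqref{eq:projection-formula-section4} becomes the standard $L^2$ characterization $\bar u=P_{U_{\rm ad}}(-\alpha^{-1}\mathcal K^*(\bar y,\bar p))$, with $P_{U_{\rm ad}}$ the $L^2$-metric projection onto the closed convex set $U_{\rm ad}$. By the pointwise structure of box constraints, this projection coincides with the componentwise $\Pi_{[u_{\min,\ell},u_{\max,\ell}]}$. For the switching analysis, I would define $S_{\rm red}$ from $p_{\rm red}$ and $S_{\rm fb}$ from the correction term in \eqref{eq:operator-valued-feedback-formula}. The same bound applies to each part separately, so both are in $L^2(Q)$.

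I expect the main obstacle to be the justification of the hypothesis itself, not the product estimate. Bounding $\bar y$ via a maximum principle requires a sign structure on $\mathcal A+\mathcal N[\bar y]+\mathcal K(\bar u)$ and a positivity-preserving renewal, which the assumptions do not guarantee in general. This is why the proposition is stated with $\bar y\in L^\infty$ as an added hypothesis, and I would keep it that way.
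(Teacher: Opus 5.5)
Your proposal is correct and matches the paper's proof, which is exactly the pointwise bound $|(K_\ell\bar y)\cdot\bar p|\le\|K_\ell\|_\infty\|\bar y\|_{L^\infty(Q)}|\bar p|$ combined with $\bar p\in L^2(Q)$. Your checks that $\bar p\in\mathcal P$, and your density/projection argument for the $L^2$ form of the variational inequality and the switching decomposition, fill in details the paper leaves implicit. One caveat: the density argument only gives the $L^2$ extension of $j'(\bar u)$ as a functional, not Fr\'echet differentiability of $j$ on $L^2$, since $S$ is differentiable only on an $L^\infty$-neighborhood — but that is all the statement requires.
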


\begin{proof}
$(K_\ell\bar y)\cdot\bar p$ is bounded by $\|K_\ell\|_\infty\|\bar
y\|_{L^\infty(Q)}|\bar p|$ pointwise, and $\bar p\in L^2(Q)$, so the product is
in $L^2(Q)$.
\end{proof}

We set $S_{\rm sw}:=\alpha\bar u+\mathcal K^*(\bar y,\bar p)$, with the usual
complementarity \eqref{eq:variational-inequality-section4}.

\subsection{Feedback perturbation of the switching function}
\label{subsec:switching-perturbation}
With $p_{\rm red}=\mathcal G^*(q_{\rm run}+q_{\rm obs})$ and
\[
p_{\rm fb}
:=
\mathcal G^*\!\left(\big[(I-\mathcal T)^{-1}\ell_{\bar y,\bar u}(p_{\rm red})\big](t)\chi\right),
\qquad
\bar p=p_{\rm red}+p_{\rm fb},
\]
linearity of $\mathcal K^*(\bar y,\cdot)$ gives
$S_{\rm sw}=S_{\rm red}+S_{\rm fb}$ with
$S_{\rm red}=\alpha\bar u+\mathcal K^*(\bar y,p_{\rm red})$ and
$S_{\rm fb}=\mathcal K^*(\bar y,p_{\rm fb})$. (All densities lie in $L^1(Q)$,
or $L^2(Q)$ under Proposition~\ref{prop:Kstar-L2}.) In the scalar reduction,
$S_{\rm fb}=\frac{\ell(p_{\rm red})}{1-\ell(\psi)}\mathcal K^*(\bar y,\psi)$.

\begin{theorem}[Threshold preservation under weak feedback]
\label{thm:threshold-preservation}
Let $m=1$ and $S_{\rm sw}=S_{\rm red}+S_{\rm fb}$. If, for a.e.\ $(t,a,x)$ in
$Q\setminus\mathcal Z_\delta$ with
$\mathcal Z_\delta:=\{|S_{\rm red}|\le\delta\}$, one has
$|S_{\rm fb}|<|S_{\rm red}|$, then $\sgn S_{\rm sw}=\sgn S_{\rm red}$ there.
Moreover the decision-change set
$\mathcal C_{\rm fb}:=\{S_{\rm red}S_{\rm sw}<0\}$ satisfies
$\mathcal C_{\rm fb}\subset\{|S_{\rm red}|\le|S_{\rm fb}|\}$.
\end{theorem}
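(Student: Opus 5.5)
The argument is pointwise and elementary; I would work with fixed representatives of $S_{\rm red}$, $S_{\rm fb}\in L^1(Q)$ and drop a null set. Everything rests on the identity $S_{\rm sw}=S_{\rm red}+S_{\rm fb}$ from Subsection~\ref{subsec:switching-perturbation}, which holds a.e.\ because $\mathcal K^*(\bar y,\cdot)$ is linear and $\bar p=p_{\rm red}+p_{\rm fb}$. No spectral or PDE information about $\mathcal T$ or $\mathcal G^*$ enters. Theorem~\ref{thm:low-rank-feedback-alternative} is used only to guarantee that $p_{\rm fb}$, and hence $S_{\rm fb}$, is well defined.

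\emph{Step 1 (sign preservation).} Fix a point $(t,a,x)\in Q\setminus\mathcal Z_\delta$ outside the exceptional null set, where $|S_{\rm fb}|<|S_{\rm red}|$ holds. Since $|S_{\rm red}|>\delta\ge0$, we have $S_{\rm red}\neq0$. Multiply the identity by $S_{\rm red}$:
\[
S_{\rm red}S_{\rm sw}=S_{\rm red}^2+S_{\rm red}S_{\rm fb}\ge |S_{\rm red}|\big(|S_{\rm red}|-|S_{\rm fb}|\big)>0 .
\]
Hence $S_{\rm sw}\neq0$ and $\sgn S_{\rm sw}=\sgn S_{\rm red}$ there. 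Equivalently, $|S_{\rm sw}-S_{\rm red}|<|S_{\rm red}|$ confines $S_{\rm sw}$ to the open interval $(0,2S_{\rm red})$, or to $(2S_{\rm red},0)$ when $S_{\rm red}<0$.

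\emph{Step 2 (decision-change set).} I would argue by contraposition on all of $Q$, with no use of $\delta$. Suppose $|S_{\rm fb}|<|S_{\rm red}|$ at a point. The computation in Step 1 then gives $S_{\rm red}S_{\rm sw}>0$, so the point is not in $\mathcal C_{\rm fb}$. Therefore every point of $\mathcal C_{\rm fb}$ satisfies $|S_{\rm red}|\le|S_{\rm fb}|$, which gives $\mathcal C_{\rm fb}\subset\{|S_{\rm red}|\le|S_{\rm fb}|\}$ up to a null set. This inclusion is meaningful because, by \eqref{eq:projection-formula-section4}, the sign of the switching function decides whether $\bar u$ sits at $u_{\min}$ or $u_{\max}$.

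\emph{Expected obstacle.} The only delicate point is measure-theoretic. The functions are only $L^1$ classes, so both conclusions hold a.e., and the sets $\mathcal Z_\delta$, $\mathcal C_{\rm fb}$ are defined up to null sets. I would state this explicitly and note that the inclusion is independent of the chosen representatives. I would also add a remark that $\mathcal C_{\rm fb}$ can be estimated, for instance by Chebyshev: $|\mathcal C_{\rm fb}\setminus\mathcal Z_\delta|\le\delta^{-1}\|S_{\rm fb}\|_{L^1}$, with $\|S_{\rm fb}\|_{L^1}$ bounded via the bound of Theorem~\ref{thm:low-rank-feedback-alternative}. That remark is optional and not needed for the statement.
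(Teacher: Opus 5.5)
Your proposal is correct and is essentially the paper's proof. The paper splits into the cases $S_{\rm red}>0$ and $S_{\rm red}<0$, using $S_{\rm sw}\ge S_{\rm red}-|S_{\rm fb}|>0$ or $S_{\rm sw}\le S_{\rm red}+|S_{\rm fb}|<0$, which is the same estimate as your single bound $S_{\rm red}S_{\rm sw}\ge|S_{\rm red}|(|S_{\rm red}|-|S_{\rm fb}|)>0$, and it obtains $\mathcal C_{\rm fb}\subset\{|S_{\rm red}|\le|S_{\rm fb}|\}$ by the same contrapositive.
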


\begin{proof}
If $S_{\rm red}>0$ then $S_{\rm sw}\ge S_{\rm red}-|S_{\rm fb}|>0$; if
$S_{\rm red}<0$ then $S_{\rm sw}\le S_{\rm red}+|S_{\rm fb}|<0$. If
$S_{\rm red}S_{\rm sw}<0$ then $|S_{\rm fb}|\ge|S_{\rm red}|$.
\end{proof}

As depicted in Figure~\ref{fig:threshold_preservation}, the feedback perturbation alters the optimal intervention decision by translating the abstract functional variational inequality into a clear phase-diagram for the control constraint sets. Specifically, it graphically demonstrates Theorem~\ref{thm:threshold-preservation}, showing that the optimal bang-bang control sign is robust outside the narrow margin $|S_{\rm red}| \le |S_{\rm fb}|$.

\begin{figure}[htbp]
    \centering
    \includegraphics[width=\textwidth]{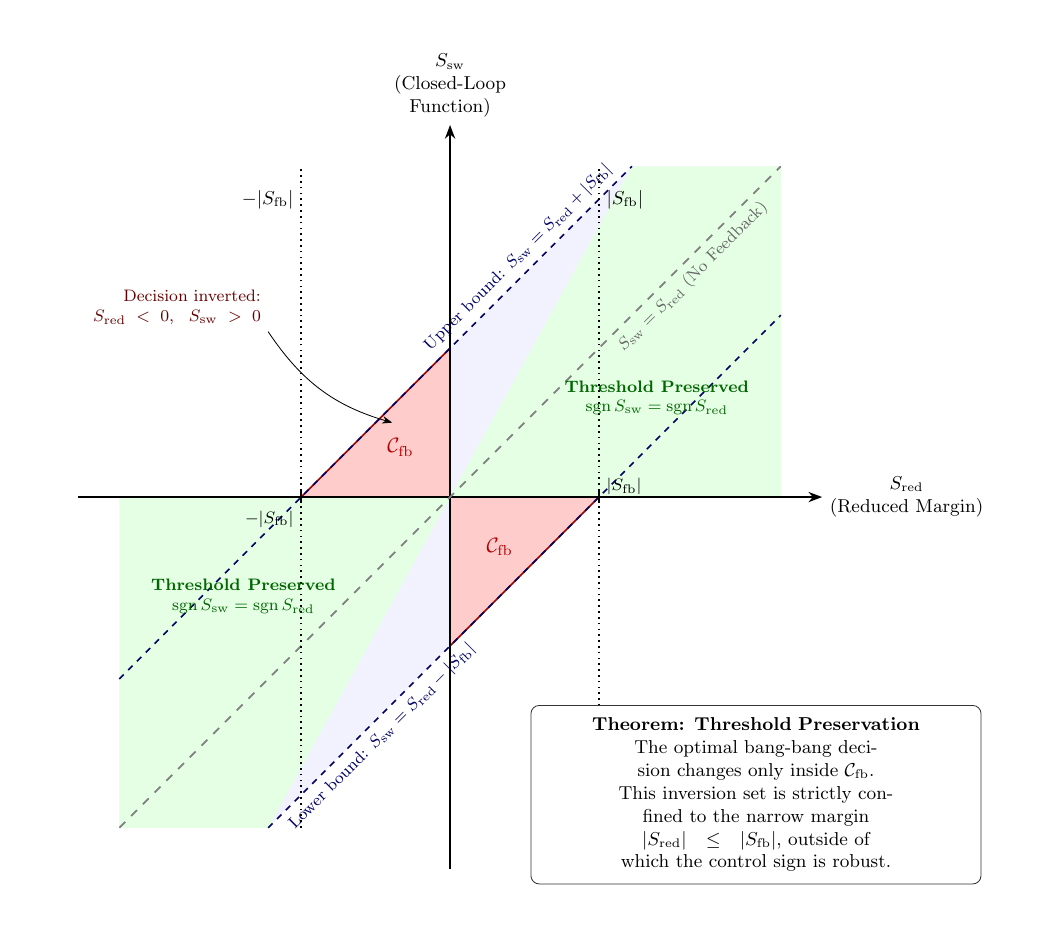} 
    \caption{Threshold preservation in the switching function. 
    This graphical representation of Theorem~\ref{thm:threshold-preservation} illustrates how the feedback perturbation alters the optimal intervention decision. The uncertainty band is defined by $\pm |S_{\rm fb}|$. The shaded decision-change set $\mathcal{C}_{\rm fb}$ is strictly confined inside the region where $|S_{\rm red}| \le |S_{\rm fb}|$. Outside this narrow margin, the sign of the switching function (and thus the optimal bang-bang control) is perfectly preserved.}
    \label{fig:threshold_preservation}
\end{figure}

\subsection{Examples of switching functions}
\label{subsec:switching-examples}
\paragraph{Vaccination transfer.}
With $y=(S,V,I,R)^\top$, $p=(p_S,p_V,p_I,p_R)^\top$, and (left-hand-side
convention) $K_{\rm vac}y=(S,-S,0,0)^\top$, one has
$\mathcal K_{\rm vac}^*(y,p)=-(K_{\rm vac}y)\cdot p=S(p_V-p_S)$, so
$S_{\rm sw}^{\rm vac}=\alpha u+S(p_V-p_S)$,
$\bar u_{\rm vac}=\Pi_{[u_{\min},u_{\max}]}(-\alpha^{-1}\bar S(\bar p_V-\bar
p_S))$, and $S_{\rm fb}^{\rm vac}=\bar S((p_{\rm fb})_V-(p_{\rm fb})_S)$.

\paragraph{Removal of infected.}
With $K_{\rm rem}y=(0,0,I,0)^\top$,
$\mathcal K_{\rm rem}^*(y,p)=-I p_I$, so
$S_{\rm sw}^{\rm rem}=\alpha u-I p_I$,
$\bar u_{\rm rem}=\Pi_{[u_{\min},u_{\max}]}(\alpha^{-1}\bar I\bar p_I)$, and
$S_{\rm fb}^{\rm rem}=-\bar I(p_{\rm fb})_I$.

\begin{remark}[Sign convention]
If the control term is placed on the right-hand side, the signs reverse. The
invariant rule is
$\int_Q\mathcal K^*(y,p)\cdot h=-\int_Q(\mathcal K(h)y)\cdot p$.
\end{remark}

\subsection{A minimal feedback-diagnostic experiment}
\label{subsec:minimal-feedback-diagnostic}
For a computed locally optimal pair $(\bar y,\bar u)$:
\begin{enumerate}[label=\textnormal{(\roman*)},leftmargin=*]
\item solve $\mathscr A_{\rm red}^*p_{\rm red}=q_{\rm run}+q_{\rm obs}$;
\item compute $\theta=(I-\mathcal T)^{-1}\ell_{\bar y,\bar u}(p_{\rm red})$ and
$p_{\rm fb}=\mathcal G^*(\theta(t)\chi)$;
\item form $S_{\rm red}=\alpha\bar u+\mathcal K^*(\bar y,p_{\rm red})$,
$S_{\rm fb}=\mathcal K^*(\bar y,p_{\rm fb})$, $S_{\rm sw}=S_{\rm red}+S_{\rm
fb}$;
\item identify $\mathcal C_{\rm fb}=\{S_{\rm red}S_{\rm sw}<0\}\subset
\{|S_{\rm red}|\le|S_{\rm fb}|\}$.
\end{enumerate}
In the scalar stationary case, $p_{\rm fb}=\frac{\ell(p_{\rm
red})}{1-\ell(\psi)}\psi$ and the loop-gain margin
$\Delta_{\rm fb}=|1-\ell(\psi)|$ controls the size of $S_{\rm fb}$. By
contrast, in the full time-dependent model
(Theorem~\ref{thm:low-rank-feedback-alternative}) no such margin can vanish:
$I-\mathcal T$ is always invertible.

\appendix
\section{Technical Details Supporting the Main Analysis}
\label{app:technical-details}

This appendix gives the derivation details behind three points used in the
main text: compactness without a separate \(\partial_t y\)-estimate, the
feedback-corrected adjoint, and the switching-function perturbation formula.
We write the arguments in a compressed form and use display formulas only when
they clarify the structure.

\subsection{Why the standard Lions--Magenes argument is not used}
\label{app:lions-magenes}

For a parabolic evolution equation in a Gelfand triple \(V\hookrightarrow
X\hookrightarrow V'\), the usual energy setting is \(y\in L^2(0,T;V)\) and
\(\partial_t y\in L^2(0,T;V')\). Under these two estimates one obtains
\(y\in C([0,T];X)\) by the Lions--Magenes continuity theorem. The present
age--space equation does not give this pair of estimates. The natural
derivative is the transport derivative \(D_{t,a}y:=\partial_t y+\partial_a y\),
and the energy bound gives \(y\in L^2(0,T;V)\) and \(D_{t,a}y\in
L^2(0,T;V')\). Since \(\partial_t y=D_{t,a}y-\partial_a y\), a bound for
\(D_{t,a}y\) does not imply a bound for \(\partial_t y\) unless
\(\partial_a y\) is separately controlled. Such a separate age-derivative
estimate is not available in the transport energy space. Therefore the
standard parabolic continuity route cannot be invoked directly.

The replacement is to straighten the transport direction. Fix
\(s=t-a\) and define \(Y_s(a,x):=y(s+a,a,x)\). Then, by the chain rule,
\(\partial_aY_s(a,x)=(\partial_t y+\partial_a y)(s+a,a,x)=D_{t,a}y(s+a,a,x)\).
Thus, along the characteristic \(t-a=s\), the age--time transport operator
becomes an ordinary evolution derivative in \(a\), while the spatial diffusion
remains parabolic. The equation becomes, schematically,
\[
\partial_aY_s-\nabla_x\cdot(\Sigma(a,x)\nabla_xY_s)+C_s(a,x)Y_s=G_s(a,x).
\]
If \(U(a,\sigma)\) denotes the spatial evolution family, then for
\(a\ge a_0\) the characteristic mild formula is
\[
Y_s(a)=U(a,a_0)Y_s(a_0)+\int_{a_0}^aU(a,\sigma)
\{G_s(\sigma)-C_s(\sigma)Y_s(\sigma)\}\,d\sigma .
\]
This formula gives continuity in the evolution variable \(a\) and hence
continuity of \(t\mapsto y(t)\) in \(X\), after the initial and renewal inflow
pieces are inserted. The conclusion \(y\in C([0,T];X)\) is therefore obtained
from the characteristic mild representation, not from a Lions--Magenes
argument based on \(\partial_t y\).

\subsection{Weak-star controls and strong convergence of states}
\label{app:weakstar-strong}

Let \(u_k\overset{*}{\rightharpoonup}u\) in \(L^\infty(Q;\R^m)\), and let
\(y_k=S(u_k)\). The goal is to extract a subsequence such that \(y_k\to y\)
strongly in \(L^2(Q;\R^n)\) and then identify \(y=S(u)\).

The characteristic mild representation has the schematic form
\(y_k=\Psi_k-\mathcal V(C_ky_k)\), where \(C_k=\mathcal A(E_k)+\mathcal N[y_k]
+\mathcal K(u_k)\). Here \(\Psi_k\) contains the initial inflow, the renewal
inflow, and the source term. The a priori estimate gives \(y_k\rightharpoonup
y\) in \(L^2(0,T;V)\), while the feedback compactness gives \(E_k\to E\) in
\(C([0,T])\). The renewal trace compactness gives \(\Psi_k\to\Psi\) strongly
in \(L^2(Q)\).

Next, use the compactness of \(\mathcal V\). If \(g_k\rightharpoonup g\) in
\(L^2(Q)\), then \(\mathcal Vg_k\to\mathcal Vg\) strongly in \(L^2(Q)\). Apply
this to \(g_k=C_ky_k\), using boundedness of \(C_k\) in \(L^\infty\). The
coefficient pieces pass as follows. Since \(E_k\to E\) uniformly,
\(\mathcal A(E_k)y_k\rightharpoonup \mathcal A(E)y\). Since the nonlocal map is compact,
\(\mathcal N[y_k]\to\mathcal N[y]\) strongly and hence
\(\mathcal N[y_k]y_k\rightharpoonup\mathcal N[y]y\). Finally, for each control
component, \(u_{k,\ell}\overset{*}{\rightharpoonup}u_\ell\) in \(L^\infty\);
once strong convergence of \(y_k\) is obtained, \(K_\ell y_k\to K_\ell y\)
strongly in \(L^2\), and therefore \(u_{k,\ell}K_\ell y_k\rightharpoonup
u_\ell K_\ell y\) weakly in \(L^2\).

To obtain the strong convergence, use the Volterra resolvent form
\(y_k=(I+\mathcal V_k^C)^{-1}\Psi_k\), where
\(\mathcal V_k^Cg:=\mathcal V(C_kg)\). The age-Volterra structure yields the
uniform estimate \(\|(\mathcal V_k^C)^j\|\le (CMa_{\max})^j/j!\). Hence the
Neumann series for \((I+\mathcal V_k^C)^{-1}\) converges uniformly in \(k\).
For each fixed \(j\), compactness of \(\mathcal V\) and the weak convergence
of \(C_kg\) imply \((\mathcal V_k^C)^j\Psi_k\to(\mathcal V^C)^j\Psi\) strongly.
The uniform convergence of the Neumann series then gives \(y_k\to y\) strongly
in \(L^2(Q)\). Passing to the limit in the weak formulation identifies \(y\)
as the solution driven by \(u\), so \(y=S(u)\) by uniqueness.

\subsection{Derivation of the feedback-corrected adjoint}
\label{app:feedback-adjoint}

Let \(z=S'(\bar u)h\) be the linearized state. The feedback observable is
\(E_y(t)=\int_{\mathcal I_a}\int_\Omega\chi\cdot y\,dx\,da\), so its variation is
\(\delta E_z(t)=\int_{\mathcal I_a}\int_\Omega\chi\cdot z\,dx\,da\). The derivative of
the interior coefficient gives the term \(\delta E_z(t)c_{\bar y}\), where
\(c_{\bar y}:=\partial_E\mathcal A(\bar E)\bar y\). The derivative of the renewal law
gives the boundary term \(\delta E_z(t)b_{\bar y}\), where
\(b_{\bar y}(t,x):=\int_{\mathcal I_a}\partial_E\mathcal B(\bar E(t),\alpha,x)
\bar y(t,\alpha,x)\,d\alpha\).

To derive the adjoint correction, pair the linearized equation with a test
adjoint \(p\). The interior feedback term contributes
\(\int_0^T\delta E_z(t)\int_{\mathcal I_a}\int_\Omega c_{\bar y}\cdot p\,dx\,da\,dt\).
The renewal feedback term enters with the opposite sign after integration by
parts in the age-transport operator and contributes
\(-\int_0^T\delta E_z(t)\int_\Omega b_{\bar y}\cdot p(t,0,x)\,dx\,dt\). Hence
the total feedback contribution equals
$\displaystyle -\int_0^T\ell_{\bar y,\bar u}(p)(t)\delta E_z(t)\,dt$,
where
\[
\ell_{\bar y,\bar u}(p)(t):=
-\int_{\mathcal I_a}\int_\Omega c_{\bar y}\cdot p\,dx\,da
+\int_\Omega b_{\bar y}\cdot p(t,0,x)\,dx .
\]
Since \(\delta E_z(t)=\int_{\mathcal I_a}\int_\Omega\chi\cdot z\,dx\,da\), the last
quantity becomes
\[
-\int_Q \ell_{\bar y,\bar u}(p)(t)\chi(a,x)\cdot z(t,a,x)\,dx\,da\,dt .
\]
Therefore the feedback derivative is represented in the adjoint equation by
the separable source \(\ell_{\bar y,\bar u}(p)(t)\chi(a,x)\). It is rank-one
in the structured variables \((a,x)\), but not scalar in time.

\subsection{Volterra feedback operator and quasinilpotency}
\label{app:volterra-quasinilpotency}

Let \(\mathcal G^*\) be the reduced backward adjoint solution operator. If
\(p_{\rm red}:=\mathcal G^*q\) is the reduced adjoint, then the full adjoint
satisfies \(p=p_{\rm red}+\mathcal G^*(\ell_{\bar y,\bar u}(p)(t)\chi)\).
Set \(\theta:=\ell_{\bar y,\bar u}(p)\). Applying \(\ell_{\bar y,\bar u}\) to
the previous identity gives \((I-\mathcal T)\theta
=\ell_{\bar y,\bar u}(p_{\rm red})\), where
$\displaystyle \mathcal T\theta:=\ell_{\bar y,\bar u}\big(\mathcal G^*(\theta(t)\chi)\big)$.

The reduced forward solution operator is causal, so its adjoint
\(\mathcal G^*\) is anti-causal. Thus \(\mathcal T\) is backward Volterra in
time: if \(P_\tau\) denotes multiplication by \(\mathbf 1_{(\tau,T]}\), then
\(P_\tau\mathcal T=P_\tau\mathcal TP_\tau\). Under the additional
propagator-kernel hypothesis, this anti-causal operator has the triangular
representation
\[
(\mathcal T\theta)(t)=\int_t^T\kappa(t,s)\theta(s)\,ds,
\qquad
\kappa\in L^2(\{0\le t\le s\le T\}).
\]
Then \(\mathcal T\) is Hilbert--Schmidt and compact. Moreover, the triangular
support gives the Volterra iteration estimate. Indeed, the \(N\)-fold kernel
is supported on \(t\le s_1\le\cdots\le s_N\le T\), and subdivision of the
triangle into \(n\) strips gives the bound
\(\|\mathcal T^N\|\le C_n\|\kappa\|_{L^2}^N n^{-N/2}\) up to a polynomial
factor in \(N\). Taking \(N\)-th roots gives
\(r(\mathcal T)\le \|\kappa\|_{L^2}/\sqrt n\), and then \(n\to\infty\) gives
\(r(\mathcal T)=0\). Hence \(\sigma(\mathcal T)=\{0\}\), and \(I-\mathcal T\)
is invertible.

Consequently, in the Volterra-kernel regime,
$\displaystyle p=p_{\rm red}
+\mathcal G^*\left(
\big[(I-\mathcal T)^{-1}\ell_{\bar y,\bar u}(p_{\rm red})\big](t)\chi
\right)$.
This is the feedback-corrected adjoint formula. It is not the scalar
Sherman--Morrison formula. The scalar denominator \(1-\ell(\psi)\) appears
only if the time-dependent Volterra loop is collapsed to a stationary or
algebraic feedback loop.

\subsection{Switching-function decomposition}
\label{app:switching}

The reduced gradient identity is
\[
j'(\bar u)h
=
\int_Q\{\alpha\bar u+\mathcal K^*(\bar y,\bar p)\}\cdot h\,dx\,da\,dt .
\]
The feedback-corrected adjoint decomposes as \(\bar p=p_{\rm red}+p_{\rm fb}\),
where
$\displaystyle p_{\rm fb}:=
\mathcal G^*\left(
\big[(I-\mathcal T)^{-1}\ell_{\bar y,\bar u}(p_{\rm red})\big](t)\chi
\right)$.
Because \(\mathcal K^*(\bar y,\cdot)\) is linear in the adjoint variable,
\(\mathcal K^*(\bar y,\bar p)=\mathcal K^*(\bar y,p_{\rm red})
+\mathcal K^*(\bar y,p_{\rm fb})\). Therefore
\(S_{\rm sw}:=\alpha\bar u+\mathcal K^*(\bar y,\bar p)\) decomposes as
\(S_{\rm sw}=S_{\rm red}+S_{\rm fb}\), where
\(S_{\rm red}:=\alpha\bar u+\mathcal K^*(\bar y,p_{\rm red})\) and
\(S_{\rm fb}:=\mathcal K^*(\bar y,p_{\rm fb})\).

For a scalar control, the decision changes only when the feedback correction
dominates the reduced margin. Indeed, if \(S_{\rm red}>0\) and
\(|S_{\rm fb}|<|S_{\rm red}|\), then \(S_{\rm sw}=S_{\rm red}+S_{\rm fb}>0\).
If \(S_{\rm red}<0\) and \(|S_{\rm fb}|<|S_{\rm red}|\), then
\(S_{\rm sw}<0\). Hence sign reversal can occur only where
$\displaystyle \mathcal C_{\rm fb}:=\{S_{\rm red}S_{\rm sw}<0\}
\subset
\{|S_{\rm red}|\le |S_{\rm fb}|\}$.
Thus the feedback loop affects the optimal intervention only near the reduced
switching surface, unless the feedback correction is itself large.

\bibliography{reference}

\end{document}